\newtheorem{theorem}{Theorem}[section]
\newtheorem{lemma}[theorem]{Lemma}
\newtheorem{proposition}[theorem]{Proposition}
\theoremstyle{definition}
\theoremstyle{remark}
\newtheorem*{remark}{Remark}
\newtheorem*{acknowledgements}{Acknowledgements}
\numberwithin{equation}{section}
\newcommand{\diffp}{D_+^{\beta_{+}}}
\newcommand{\diffn}{D_-^{\beta_{-}}}
\newcommand{\lqr}{{L^q_tL^r_x}}
\author{Jonathan Bennett}
\address[Jonathan Bennett]{School of Mathematics, The Watson Building, University of Birmingham, Edgbaston,
Birmingham, B15 2TT, England}
\email{j.bennett@bham.ac.uk}
\author{Neal Bez}
\address[Neal Bez]{Department of Mathematics, Graduate School of Science and Engineering,
Saitama University, Saitama 338-8570, Japan}
\email[Corresponding author]{nealbez@mail.saitama-u.ac.jp}
\author{Susana Guti\'errez}
\address[Susana Guti\'errez]{School of Mathematics, The Watson Building, University of Birmingham, Edgbaston,
Birmingham, B15 2TT, England}
\email{s.gutierrez@bham.ac.uk}
\author{Sanghyuk Lee}
\address[Sanghyuk Lee]{Department of Mathematical Sciences, Seoul National University, Seoul 151-747, Korea}
\email{shklee@snu.ac.kr}
\keywords{Kinetic transport equation, averaging lemmas, hyperbolic Sobolev spaces, cone multiplier operator}
\begin{document}

\begin{abstract}
We establish smoothing estimates in the framework of hyperbolic Sobolev spaces for the velocity averaging operator $\rho$ of the solution of the kinetic transport equation.
If the velocity domain is either the unit sphere or the unit ball, then, for any exponents $q$ and $r$, we find a characterisation of the exponents $\beta_+$ and $\beta_-$, except possibly for an endpoint case, for which $D_+^{\beta_+}D_-^{\beta_-} \rho$ is bounded from space-velocity $L^2_{x,v}$ to space-time $L^q_tL^r_x$. Here, $D_+$ and $D_-$ are the classical
and hyperbolic derivative operators, respectively. In fact, we shall provide an argument which unifies these velocity domains and the velocity averaging estimates in either case are shown to be equivalent to mixed-norm bounds on the cone multiplier operator acting on $L^2$. We develop our ideas further in several ways, including estimates for initial data lying in certain Besov spaces, for which a key tool in the proof is the sharp $\ell^p$ decoupling theorem recently established by Bourgain and Demeter. We also show that the level of permissible smoothness increases significantly if we restrict attention to initial data which are radially symmetric in the spatial variable.
\end{abstract}

\date{\today}

\title[Estimates for the kinetic transport equation]{Estimates for the kinetic transport equation in hyperbolic Sobolev spaces}
%{Estimates for the kinetic transport equation in hyperbolic Sobolev spaces}
\maketitle %\thispagestyle{empty}

\section{Introduction}

In this paper we consider regularity estimates for velocity integrals of the solution
\[
F(x,v,t)=f(x-tv,v)
\]
of the kinetic transport equation
\[
(\partial _t +v\cdot\nabla)F=0, \qquad F(x,v,0)=f(x,v),
\]
where $(x,v,t) \in \mathbb{R}^d \times \mathbb{R}^d\times \mathbb{R}$. 
The regularising effect of velocity integration (or ``velocity averaging") of the form 
\begin{equation}\label{rhobasic}
\rho f(x,t)=\int_{\mathbb{R}^d}f(x-tv,v) \,\mathrm{d}\mu(v)
\end{equation}
for various velocity measures $\mu$ has received considerable attention in the literature, where they are often referred to as \emph{velocity averaging lemmas} (see, for example, \cite{Bezard}, \cite{Bouchut2002}, \cite{BDes}, \cite{DeVP}, \cite{DLM}, \cite{Gerard}, \cite{GLPS}, \cite{GPR}, \cite{JabinVegaParis}, \cite{JabinVegaJMPA}, \cite{Lions}, \cite{LionsII}). Inequalities of this type are extremely rich, capturing diverse phenomena from geometric and harmonic analysis. This is perhaps most apparent through the interpretation of the dual operation
\begin{equation}\label{rhod}
\rho^*g(x,v) = \int_{\mathbb{R}}g(x+tv,t) \, \mathrm{d}t
\end{equation}
as a (space-time) $X$-ray transform, for which important problems remain wide open; see, for example, \cite{LT} or \cite{WolffX}. 
%for a formulation of the longstanding Kakeya maximal conjecture in these terms. %In addition to this manifest connection with combinatorial geometry, the velocity average $\rho f$ is naturally studied via the Fourier transform, allowing one to draw on methods and perspectives from classical harmonic analysis (examples or citations here). 

For the purposes of this introductory section, we focus our attention on the (physically-relevant) velocity average
\[
\rho f(x,t) = \int_{\mathbb{S}^{d-1}} f(x-tv,v) \, \mathrm{d}\sigma(v),
\]
where $\sigma$ is the induced Lebesgue measure on the unit sphere $\mathbb{S}^{d-1}$. Our estimates will capture a natural regularising effect of the averaging operator $\rho$ through the use of hyperbolic Sobolev spaces, and we begin by introducing our results in the context of initial data in $L^2(\mathbb{R}^d \times \mathbb{S}^{d-1})$. For example, given any $q,r \in [2,\infty)$, we shall obtain the optimal range of exponents $\beta_+$ and $\beta_-$ (except possibly an endpoint case) for which the global space-time estimate
\begin{equation} \label{e:sobs}
\| \diffp\diffn  \rho f\|_{L^q_tL_x^r}\le C\|f\|_{L^2_{x,v}}
\end{equation}
holds. Here, $D_+^{\beta_+}$ denotes classical fractional differentiation of order $\beta_+$ and $D_-^{\beta_-}$ denotes the hyperbolic differentiation operator of order $\beta_-$; these are Fourier multiplier operators with multipliers $(|\xi| + |\tau|)^{\beta_+}$ and $||\xi| - |\tau||^{\beta_-}$, respectively.

As far as we are aware, Bournaveas and Perthame \cite{BP} were the first to investigate regularising properties of velocity averages over spheres using hyperbolic Sobolev spaces. They obtained \eqref{e:sobs} in the case $(q,r) = (2,2)$ when $(d,\beta_+,\beta_-) = (3,\frac{1}{2},0)$ and $(d,\beta_+,\beta_-) = (2,\frac{1}{4},\frac{1}{4})$. Notice that in the two-dimensional case, a \emph{total} of $\frac{1}{2}$-derivative has been gained by the velocity average through the inclusion of hyperbolic derivatives; it was observed in \cite{BP} that such a gain is not possible by considering classical derivatives alone. These results were extended to all space dimensions $d \geq 2$ in \cite{BGRevista} and it was shown that \eqref{e:sobs} holds whenever $(q,r) = (2,2)$ and $(\beta_+,\beta_-) = (\frac{d-1}{4},-\frac{d-3}{4})$.

We now state our first main result which gives an extension of these results to $q,r \in [2,\infty)$. In general the total number of derivatives is given by
\begin{equation} \label{e:scaling}
\beta_+ + \beta_- = \frac{d}{r}+\frac{1}{q} -\frac{d}{2}.
\end{equation}
This restriction is in fact a necessary condition for \eqref{e:sobs} to hold, as can be shown by a simple scaling argument. Also, it will be useful to write
\[
\beta^*_+ = \min\bigg\{ \frac{d+1}{2r} - \frac{1}{2}, \frac{d}{r} + \frac{1}{q} - \frac{d+1}{4} \bigg\}.
\]
\begin{theorem}\label{t:thm-sobs} Let $d \geq 2$, $q,r\in [2,\infty)$ and suppose $\beta_+$, $\beta_-$ satisfy \eqref{e:scaling}. 
\begin{enumerate}
\item Suppose $\frac{1}{q} \leq \frac{d-1}{2}(\frac{1}{2} - \frac{1}{r})$. Then \eqref{e:sobs} holds if and only if $\beta_+ < \beta^*_+$.
\item Suppose $\frac{1}{q} > \frac{d-1}{2}(\frac{1}{2} - \frac{1}{r})$. Then \eqref{e:sobs} holds if $\beta_+ < \beta^*_+$ and fails if $\beta_+ > \beta^*_+$.
\end{enumerate}
\end{theorem}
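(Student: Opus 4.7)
The plan is to reformulate \eqref{e:sobs} via a $TT^*$ argument as a mixed-norm estimate for a cone multiplier operator, in accordance with the unification promised in the abstract. A direct computation yields $\widehat{\rho^*g}(\xi,v) = \hat g(\xi,-\xi\cdot v)$, so that $\rho^*$ is a Fourier extension type operator for the solid light cone $\{|\tau|\le|\xi|\}$. Pushing forward $\sigma$ by $v\mapsto -\xi\cdot v$ then gives, for $d\ge 3$,
\[
\widehat{\rho\rho^* g}(\xi,\tau) = c_d\bigl(1-\tau^2/|\xi|^2\bigr)_+^{(d-3)/2}|\xi|^{-1}\hat g(\xi,\tau),
\]
with the analogous expression for $d=2$. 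Combining with the multiplier $(|\xi|+|\tau|)^{2\beta_+}||\xi|-|\tau||^{2\beta_-}$ of $D_+^{2\beta_+}D_-^{2\beta_-}$, and using $|\xi|+|\tau|\sim|\xi|$ throughout the support, the full $TT^*$ symbol simplifies (up to constants) to
\[
|\xi|^{2(\beta_++\beta_-)-1}\bigl(1-|\tau|/|\xi|\bigr)_+^{\alpha}, \qquad \alpha=2\beta_-+\tfrac{d-3}{2},
\]
i.e.\ a cone multiplier of order $\alpha$ together with a homogeneous prefactor. Hence \eqref{e:sobs} is equivalent to an $L^{q'}_tL^{r'}_x\to\lqr$ bound for this operator.

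The second step is a Littlewood-Paley decomposition in $|\xi|$. On each dyadic piece $|\xi|\sim 2^j$ the homogeneous factor becomes a scalar, and the scaling relation \eqref{e:scaling} is exactly what is needed for the dyadic pieces to telescope in the $\lqr$-norm after undoing the $TT^*$ on the input side. The problem therefore reduces to a \emph{fixed-scale} estimate: the cone multiplier of order $\alpha$ localized at frequency $\sim 1$ must be bounded from $L^2$ to $\lqr$.

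For sufficiency, I would show that in the regime $\frac{1}{q}\le\frac{d-1}{2}\bigl(\frac{1}{2}-\frac{1}{r}\bigr)$ this fixed-scale estimate follows from the sharp adjoint restriction / Strichartz inequality for the cone, and the minimum defining $\beta^*_+$ is then attained by the second term $d/r+1/q-(d+1)/4$. In the complementary regime the required fixed-scale estimate is a sharp local-smoothing-type bound for the cone multiplier, and the minimum is attained by $(d+1)/(2r)-1/2$. For the necessity of $\beta_+\le\beta^*_+$, a Knapp-type example with $\hat f(\cdot,v)$ concentrated in a small cap on the cone $\tau=-\xi\cdot v$ rules out $\beta_+>(d+1)/(2r)-1/2$, while a standard Strichartz-type rescaling example rules out $\beta_+>d/r+1/q-(d+1)/4$; together with \eqref{e:scaling} these give the claimed upper bound.

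The main obstacle is the fixed-scale bound in the second regime of part (2): beyond the wave-Strichartz range, one is genuinely dealing with a cone multiplier of order $\alpha$ rather than a pure adjoint restriction operator, and the sharp $L^2\to\lqr$ range for such operators is only partially understood — this is presumably also the source of the missing endpoint in part (1). I would expect to reduce this estimate to a square-function estimate for the cone or to invoke a Bourgain--Demeter decoupling argument. A secondary technical nuisance is the singular behaviour of the symbol at $|\tau|=|\xi|$ when $d\ge 4$ and at both $|\tau|=|\xi|$ and $|\xi|\to 0$ when $d=2$, which needs care when one sets up the $TT^*$ reduction and the dyadic decomposition.
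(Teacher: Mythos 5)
Your reduction is essentially the paper's own route (Approach I): the $TT^*$ computation is the duality principle of Theorem \ref{t:dualityprinciple} specialised to $\sigma$ (your $TT^*$ symbol is the square of $m_{-1}$ in \eqref{e:m-1}), the Littlewood--Paley/rescaling step is the proof of Theorem \ref{t:genconeequiv}, and the Knapp example for the constraint $\beta_+\le\frac{d+1}{2r}-\frac12$ matches the necessity argument in Section \ref{section:conethm}. The genuine gap is the fixed-scale sufficiency bound in the non-admissible regime of part (2), which you leave open as ``the main obstacle'' and propose to attack with local smoothing, square functions or Bourgain--Demeter decoupling. None of that machinery is needed (decoupling enters the paper only for the $L^p$-data Besov estimates): since the input is $L^2$ and only the open range $\beta_+<\beta_+^*$ is claimed, it suffices to prove \eqref{e:cdeltag}, i.e. $\|\mathcal C_k g\|_{\lqr}\lesssim 2^{k\alpha^*}\|g\|_{L^2}$ for the dyadic slab at distance $\sim 2^{-k}$ from the cone, and this follows by interpolating the trivial Plancherel bound $\|\mathcal C_k g\|_{L^2}\lesssim\|g\|_{L^2}$ with the bound \eqref{e:sqrtdeltaboundg} of $2^{-k/2}$ on the sharp admissible line, the latter obtained by foliating the slab into translates of the light cone and applying Proposition \ref{p:Str} leafwise. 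The interpolation exponent lands exactly on $\alpha^*=\frac1q+\frac{d-1}{2r}-\frac{d+1}{4}$, and the geometric summation over $k$ is where the strict inequality $\beta_+<\beta_+^*$ is consumed; this same decomposition is also what handles the singular factor $(1-|\tau|/|\xi|)_+^{\alpha}$ with negative $\alpha$ that you flag as a ``technical nuisance'' in the admissible regime.

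Two further points. First, for $d=2,3$ the interpolation above does not cover all $q,r\in[2,\infty)$ (in $d=2$ the admissible range stops at $q=4$; in $d=3$ the endpoint $(2,\infty)$ Strichartz estimate fails), and the paper closes these cases with separate elementary arguments: a kernel estimate yielding $\|\mathcal C_k g\|_{L^2_tL^\infty_x}\lesssim 2^{-k/4}\|g\|_{L^2}$ when $d=2$, and the dispersive/bilinear estimate \eqref{e:3Dfix} when $d=3$. Your outline does not detect these cases, so as written the plan would not yield the full range of exponents even granting your Strichartz step. Second, a small misattribution: the unresolved endpoint $\beta_+=\beta_+^*$ lies in part (2), not part (1); in the wave-admissible regime the bump-function example behind \eqref{e:spherenec2} already rules out equality, which is precisely how the ``if and only if'' of part (1) is obtained.
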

The statement is given in terms of the parameter $\beta_+$, providing the upper threshold on the number of allowable classical derivatives. This in the spirit of the genesis of such estimates, however, $\beta_-$ may be considered the more decisive parameter since its lower threshold is negative and thus a singularity in the $D_-$ multiplier appears. Thus, we shall also write
\[
\beta^*_- = \max\bigg\{ \frac{1}{q} + \frac{d-1}{2r} - \frac{d-1}{2}, -\frac{d-1}{4}\bigg\}
\]
for the lower threshold in $\beta_-$ so that $\beta_+ < \beta_+^*$ if and only if $\beta_- > \beta_-^*$.

We will give two different proofs of the sufficiency claims in Theorem \ref{t:thm-sobs}, one of which relies on duality and Plancherel's theorem, and another which proceeds by a direct analysis of the operator $\rho$. The dual approach is special to the case of initial data in $L^2(\mathbb{R}^d \times \mathbb{S}^{d-1})$. Nevertheless, it allows us to highlight a strikingly clear connection to the cone multiplier operator, a well-known operator in harmonic analysis, whose \emph{full} range of bounds on Lebesgue spaces is a famous open problem. For $\alpha  > -1$, the cone multiplier operator $\mathcal{C}^\alpha$ of order $\alpha$ will be given by
\[
\widehat{\mathcal{C}^\alpha g}(\xi,\tau) = \bigg(1-\frac{\tau^2}{|\xi|^2}\bigg)^\alpha_+\phi(|\xi|) \widehat{g}(\xi,\tau)
\]
where $\phi \in C^\infty_c(\mathbb{R})$ is supported in $[\frac{1}{2},2]$. We note that the conventional cone multiplier operator (first introduced in \cite{SteinAMS}) is given by the multiplier $(1-|\xi|^2/\tau^2)^\alpha_+\phi(\tau)$, however we may consider these operators as essentially the same with regard to their boundedness properties and thus we continue to refer to $\mathcal{C}^\alpha$ as the cone multiplier of order $\alpha$.
\begin{theorem} \label{t:coneequiv}
Let $d \geq 2$, $q,r\in [2,\infty)$ and suppose $\beta_+$, $\beta_-$ satisfy \eqref{e:scaling}. Then the estimate \eqref{e:sobs} holds if and only if $\mathcal{C}^{\beta_- + \frac{d-3}{4}}$ is $L^2_{t,x} \to L^q_tL^r_x$ bounded.
\end{theorem}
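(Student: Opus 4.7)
The plan is to use duality to recast \eqref{e:sobs} as a Fourier multiplier estimate on $L^2_{t,x}$ and then identify that multiplier, up to scaling and benign smooth factors, with the cone multiplier symbol $(1-\tau^2/|\xi|^2)_+^{\alpha}$ for $\alpha = \beta_- + (d-3)/4$. The estimate \eqref{e:sobs} is equivalent, by duality, to $\|T^*g\|_{L^2_{x,v}} \lesssim \|g\|_{L^{q'}_tL^{r'}_x}$ where $T = \diffp\diffn\rho$, and a direct computation gives $\widehat{T^*g}(\xi,v) = (|\xi|+|\xi\cdot v|)^{\beta_+}(|\xi|-|\xi\cdot v|)^{\beta_-}\tilde g(\xi,-\xi\cdot v)$ for $v \in \mathbb{S}^{d-1}$. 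Plancherel in $x$ combined with the identity $\int_{\mathbb{S}^{d-1}} F(v\cdot e)\,d\sigma(v) = c_d\int(1-s^2)^{(d-3)/2}F(s)\,ds$, applied via the substitution $s = -\tau/|\xi|$, then yields
\[
\|T^*g\|_{L^2_{x,v}}^2 \;=\; c \int K(\xi,\tau)\,|\tilde g(\xi,\tau)|^2\,d\xi\,d\tau,
\]
with
\[
K(\xi,\tau) = |\xi|^{2\beta_++2\beta_--1}\,(1-|\tau|/|\xi|)^{2\beta_-+(d-3)/2}\,(1+|\tau|/|\xi|)^{2\beta_++(d-3)/2}\,\chi_{\{|\tau|\le|\xi|\}}.
\]
Since $K\ge 0$ is real, $K^{1/2}(D)$ is self-adjoint, and a final duality step shows that \eqref{e:sobs} is equivalent to the boundedness $K^{1/2}(D): L^2_{t,x} \to L^q_tL^r_x$.

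Next, I factor the symbol. Using $(1-s)^a(1+s)^b = (1-s^2)^a(1+s)^{b-a}$ with $s = |\tau|/|\xi|$, $a = \beta_-+(d-3)/4$, $b = \beta_++(d-3)/4$, one obtains
\[
K^{1/2}(\xi,\tau) = |\xi|^{\beta_++\beta_--\frac12}\,(1-\tau^2/|\xi|^2)^{\alpha}_+\,(1+|\tau|/|\xi|)^{\beta_+-\beta_-},\qquad \alpha = \beta_-+\tfrac{d-3}{4}.
\]
The central factor is exactly the symbol of $\mathcal{C}^\alpha$ modulo the spatial cutoff $\phi(|\xi|)$; the last factor is bounded between $1$ and $2^{|\beta_+-\beta_-|}$ on the solid cone; and the leading homogeneity $|\xi|^{\beta_++\beta_--\frac12}$ has exponent $d/r+1/q-(d+1)/2$ by \eqref{e:scaling}, which is precisely the scaling exponent for a map $L^2_{t,x}\to L^q_tL^r_x$.

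The equivalence of the global bound for $K^{1/2}(D)$ with the single-scale cone multiplier bound now follows from Littlewood--Paley in $|\xi|$: for each dyadic piece $K^{1/2}(D)P_jg$, a rescaling $\xi\mapsto 2^j\xi$, $\tau\mapsto 2^j\tau$ brings the symbol to the unit shell where, up to the factor $(1+|\tau|/|\xi|)^{\beta_+-\beta_-}$, it coincides with the symbol of $\mathcal{C}^\alpha$; Plancherel on the $L^2$ side and the Littlewood--Paley square-function inequality on the $L^q_tL^r_x$ side (valid since $q,r\ge 2$) then sum the dyadic pieces with total loss equal to $1$ because the scaling exponent is precisely matched. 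I expect the main obstacle to lie in justifying that the factor $(1+|\tau|/|\xi|)^{\beta_+-\beta_-}$ and its reciprocal act as bounded multipliers on $L^q_tL^r_x$, which is the only step needed to convert the single-scale estimate for $K^{1/2}(D)$ into one for $\mathcal{C}^\alpha$ and vice versa. A clean way to dispose of this is to split into the half-spaces $\tau>0$ and $\tau<0$, on each of which the factor is $C^\infty$ away from the origin and homogeneous of degree zero, so a Mikhlin/Lizorkin-type multiplier theorem yields the desired $L^q_tL^r_x$ boundedness. Running the equivalences in both directions then proves Theorem \ref{t:coneequiv}.
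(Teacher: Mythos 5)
Your proposal follows essentially the same route as the paper: the duality-plus-Plancherel computation is exactly the duality principle of Theorem \ref{t:dualityprinciple}, your symbol $K^{1/2}$ coincides (up to a constant) with the multiplier $m_{-1}$ in \eqref{e:m-1}, and the passage between the global bound for $K^{1/2}(D)$ and the single-scale cone multiplier bound by rescaling dyadic pieces and the Littlewood--Paley square function (using $q,r\in[2,\infty)$) is precisely the proof of Theorem \ref{t:genconeequiv} specialised to $\kappa=-1$. All of these steps are carried out correctly.

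The one step that does not work as written is your disposal of the factor $(1+|\tau|/|\xi|)^{\beta_+-\beta_-}$ via a Mikhlin/Lizorkin theorem on the half-spaces $\{\pm\tau>0\}$. On $\{\tau>0\}$ this symbol is indeed homogeneous of degree zero, but it is \emph{not} smooth away from the origin of $\mathbb{R}^{d+1}$: it is singular along the axis $\{\xi=0\}$, where its derivatives blow up much faster than the Mikhlin condition allows, and when $\beta_+\neq\beta_-$ either the factor or its reciprocal is even unbounded there. So no Mikhlin-type theorem applies to it as a global multiplier on $L^q_tL^r_x$. The repair is easy, and in fact the whole step is unnecessary: since Fourier multipliers commute, in each direction of the equivalence you may let the benign factor act \emph{first}, on the $L^2$ input, where only its $L^\infty$ bound is needed; and on the relevant Fourier support, namely $\mathfrak{C}$ intersected with $\{|\xi|\sim 2^j\}$, one has $1\le 1+|\tau|/|\xi|\le 2$ and $|\xi|^{\beta_++\beta_--\frac12}\sim 2^{j(\beta_++\beta_--\frac12)}$, so the factor and its reciprocal (the latter up to the harmless division by $\phi(|\xi|)$, handled by finitely many rescaled bumps) are bounded there. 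This is exactly the role of the bounded factor $\tilde m$ in the paper's proof of Theorem \ref{t:genconeequiv}: schematically, $\mathcal{C}^\alpha g=K^{1/2}(D)\bigl(b(D)g\bigr)$ with $\|b\|_{L^\infty}\lesssim 1$, and conversely each rescaled piece $K^{1/2}(D)P_jg$ equals $\mathcal{C}^\alpha$ applied to $c(D)P_jg$ with $\|c\|_{L^\infty}\lesssim 1$. With that substitution (or with a corrected multiplier argument that first restricts to $\mathfrak{C}$ and splits the sign of $\tau$ only to remove the corner of $|\tau|$ at $\tau=0$, after which the localised symbol has integrable inverse Fourier transform), your argument is complete.
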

Thus, $\mathcal{C}^\alpha$ is the fundamental operator whose mixed-norm bounds for functions in $L^2_{t,x}$ underpin the smoothing estimates \eqref{e:sobs}. Even in the case $(q,r) = (2,2)$, this gives a new perspective by showing that the bounds established in \cite{BP} and \cite{BGRevista} at $(\beta_+,\beta_-) = (\frac{d-1}{4},\frac{3-d}{4})$ are equivalent to the (elementary) boundedness of $\mathcal{C}^0$ on $L^2$.

Naturally, we would like to establish a full understanding of the mixed-norm estimates of $\mathcal{C}^\alpha$ for functions in $L^2$. Although the $L^p \to L^q$ bounds for this operator have been extensively studied (see, for example, \cite{Bourgaincone}, \cite{GarrigosSeeger}, \cite{Heo}, \cite{HeoNazarovSeegerI}, \cite{HeoNazarovSeegerII}, \cite{LabaWolff}, \cite{Lee}, \cite{LeeVargas}, \cite{Mockenhaupt}, \cite{TaoVargasI}, \cite{TaoVargasII}, \cite{Wolff}) we were not able to find a reference for the mixed-norm estimates that we need in the present work and thus, in Section \ref{section:conethm}, we shall include a proof of the following result. Moreover, the argument given there is the basis of our direct approach to proving Theorem \ref{t:thm-sobs}.

Let
\begin{equation} \label{e:alphastar}
\alpha^* = \alpha^*(q,r) = \max\bigg\{\frac{1}{q}+\frac{d-1}{2r}-\frac{d+1}{4}, -\frac{1}{2}\bigg\}.
\end{equation}
\begin{theorem} \label{t:coneestimates}
Let $d \geq 2$ and $q,r\in [2,\infty)$.
\begin{enumerate}
\item Suppose $\frac{1}{q} \leq \frac{d-1}{2}(\frac{1}{2} - \frac{1}{r})$. Then $\mathcal{C}^{\alpha}$ is $L^2_{t,x} \to L^q_tL^r_x$ bounded if and only if $\alpha > \alpha^*$.
\item Suppose $\frac{1}{q} > \frac{d-1}{2}(\frac{1}{2} - \frac{1}{r})$. Then $\mathcal{C}^{\alpha}$ is $L^2_{t,x} \to L^q_tL^r_x$ bounded if $\alpha > \alpha^*$ and unbounded if $\alpha < \alpha^*$.
\end{enumerate}
\end{theorem}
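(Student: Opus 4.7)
For the necessity of $\alpha\geq\alpha^*$ I would use two Knapp-type test functions. The bound $\alpha\geq\frac{1}{q}+\frac{d-1}{2r}-\frac{d+1}{4}$ is witnessed by testing against $g_j$ whose Fourier transform is essentially the indicator of a plate tangent to the cone, of dimensions $1\times(2^{-j/2})^{d-1}\times 2^{-j}$, at a unit-frequency point; a direct scaling computation of $\|g_j\|_{L^2}$ and $\|\mathcal{C}^\alpha g_j\|_{L^q_tL^r_x}$ forces this threshold. The flat bound $\alpha\geq -1/2$ is witnessed by a second test function whose Fourier support fills a full $2^{-j}$-annular neighborhood of the cone, isolating the radial singularity of $(1-\tau^2/|\xi|^2)^\alpha$ near $\tau=\pm|\xi|$ in a manner independent of $(q,r)$.

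For sufficiency, the plan is to decompose
\[
\mathcal{C}^\alpha \;=\; \sum_{j\geq 0}\mathcal{C}^\alpha_j
\]
dyadically in the distance to the cone, so that the multiplier of $\mathcal{C}^\alpha_j$ is a smooth localization to $\{(\xi,\tau):|\xi|\sim 1,\ 1-\tau^2/|\xi|^2\sim 2^{-j}\}$ of magnitude $\sim 2^{-j\alpha}$. It then suffices to prove the single-scale estimate
\[
\|\mathcal{C}^\alpha_j g\|_{L^q_tL^r_x} \;\lesssim\; 2^{j(\alpha^*-\alpha)}\|g\|_{L^2_{t,x}}
\]
and sum geometrically under the assumption $\alpha>\alpha^*$. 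To prove this I would further decompose the $2^{-j}$-neighborhood of the cone into plates $\{\Pi_\nu\}$ of angular width $2^{-j/2}$, writing $\mathcal{C}^\alpha_j=\sum_\nu \mathcal{C}^\alpha_{j,\nu}$. The proof then rests on two ingredients. Firstly, the almost-disjointness of the plates in Fourier space gives the $L^2$ orthogonality $\|\sum_\nu \mathcal{C}^\alpha_{j,\nu}g\|_{L^2}^2\lesssim 2^{-2j\alpha}\|g\|_{L^2}^2$, which combined with Cauchy--Schwarz over the $\sim 2^{j(d-1)/2}$ plates recovers the $-1/2$ threshold. Secondly, each plate admits a sharp Bernstein/Hausdorff--Young bound, obtained by rotating and applying a Lorentz-type change of coordinates that realizes $\Pi_\nu$ as a standard rectangle of dimensions $1\times(2^{-j/2})^{d-1}\times 2^{-j}$, and then applying Bernstein's inequality coordinate by coordinate; this yields the plate Knapp exponent. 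Combining and interpolating between these two ingredients produces precisely $\alpha^*=\max\{\frac{1}{q}+\frac{d-1}{2r}-\frac{d+1}{4},-\frac{1}{2}\}$, with the two regimes of the statement corresponding to which alternative of the maximum is dominant.

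The main obstacle will be the sharp single-plate mixed-norm estimate itself. Plates are not coordinate-aligned and do not factor as products of spatial and temporal sets, so the Lorentz-type change of variables above is needed to apply Bernstein cleanly; after the change, the target $L^q_tL^r_x$ norm becomes a mixed norm in new coordinates, and one must verify carefully that the resulting plate estimate, combined with $L^2$ orthogonality over $\sim 2^{j(d-1)/2}$ plates, delivers the exponent $\alpha^*$ uniformly across both regimes of the statement. As a consistency check, one may reconcile the diagonal case $q=r$ against the $L^p\to L^p$ bounds for the cone multiplier cited in the introduction.
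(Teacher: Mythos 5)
Your overall architecture (reduce to the multiplier $\phi(|\xi|)(|\xi|-|\tau|)_+^\alpha$, decompose dyadically in the distance to the cone, prove a single-scale bound with exponent $2^{k\alpha^*}$ and sum geometrically for $\alpha>\alpha^*$, plus a Knapp plate for the first necessary condition) matches the paper. The genuine gap is in your proof of the single-scale estimate. Your two ingredients --- $L^2$ almost-orthogonality of the $2^{-k/2}$-plates and a per-plate mixed-norm Bernstein bound --- combined via Cauchy--Schwarz over the $\sim 2^{k(d-1)/2}$ plates do \emph{not} produce $\alpha^*$. Quantitatively: the sharp per-plate bound is $\|\mathcal{C}_{k,\nu}g\|_{L^q_tL^r_x}\lesssim 2^{k(\frac{1}{q}+\frac{d-1}{2r}-\frac{d+1}{4})}\|\mathcal{C}_{k,\nu}g\|_{L^2}$, and Cauchy--Schwarz over plates then yields the exponent $\frac{1}{q}+\frac{d-1}{2r}-\frac{1}{2}$, which exceeds $\alpha^*$ by $\frac{1}{q}+\frac{d-1}{2r}$ in the wave-admissible regime and by $\frac{d-1}{4}$ in the other regime; interpolating this with the (sharp) $L^2\to L^2$ bound at $(q,r)=(2,2)$ cannot repair the loss, since interpolation never beats its endpoints (e.g.\ for $d=3$, $q=r=4$ your scheme gives at best $2^{-k/4}$ where $2^{-k/2}$ is needed). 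Passing from Cauchy--Schwarz over plates to an $\ell^2$ sum is precisely a decoupling inequality for the cone, which is not elementary and is only available with $\varepsilon$-losses in a restricted range of $q$. The paper instead proves the crucial bound $\|\mathcal{C}_kg\|_{L^q_tL^r_x}\lesssim 2^{-k/2}\|g\|_{L^2}$ for wave-admissible $(q,r)$ by foliating the $2^{-k}$-neighbourhood into translated light cones $\tau=|\xi|-s$, applying the frequency-localised Strichartz estimates for the half-wave propagator (Proposition \ref{p:Str}) slice by slice, and then Cauchy--Schwarz in the foliation parameter $s$ (an interval of length $\sim2^{-k}$); interpolation with the trivial $L^2$ bound then gives \eqref{e:cdeltag} for $d\geq4$, and separate arguments (an $L^2_tL^\infty_x$ kernel/dispersive estimate in $d=2$, and a $TT^*$ argument with a dyadic decomposition in $|t-s|$ in $d=3$) handle the failure of endpoint Strichartz in low dimensions. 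In short, the dispersive input is indispensable and is exactly what your Bernstein-plus-orthogonality scheme omits; without it the claimed threshold $\alpha^*$ is out of reach for all finite $q,r$.

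A secondary point: in part (1) the statement is an ``if and only if'' with $\alpha^*=-\frac{1}{2}$, so you must show unboundedness \emph{at} $\alpha=-\frac{1}{2}$, not just for $\alpha<-\frac{1}{2}$. Your single-scale test function filling a $2^{-j}$-neighbourhood only yields $\alpha\geq-\frac{1}{2}$. The paper uses a fixed bump function $g$ (with $\widehat{g}\equiv1$ on the relevant region) for which $\|\mathcal{C}^\alpha g\|_{L^2}^2\gtrsim\int_0^1(1-\lambda)^{2\alpha}\,\mathrm{d}\lambda$, which diverges at $\alpha=-\frac{1}{2}$ and so gives the strict condition; you would need this (or a summation over scales) to close the endpoint.
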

We may say that $(q,r)$ is \textit{wave-admissible} when $\frac{1}{q} \leq \frac{d-1}{2}(\frac{1}{2} - \frac{1}{r})$ and $q,r \in [2,\infty)$. This is common terminology and, since we consider the case $r < \infty$, wave-admissibility is equivalent to the validity of the classical Strichartz estimates $\dot{H}^s \times \dot{H}^{s-1} \to L^q_tL^r_x$ for the solution of the wave equation $(\partial_t^2 - \Delta)u = 0$, where $s = \frac{d}{2} - \frac{d}{r} - \frac{1}{q}$. These estimates form the basis for our proofs of Theorem \ref{t:thm-sobs} (via the direct approach and the dual approach), along with some additional arguments when $d=2,3$, and classical Littlewood--Paley theory.

Our direct approach to proving the sufficiency claims in Theorem \ref{t:thm-sobs} has the merit that it naturally extends beyond the case where the initial data lies in $L^2(\mathbb{R}^d \times \mathbb{S}^{d-1})$. For example, we shall use this approach to establish an extension of Theorem \ref{t:thm-sobs} for initial data in certain Besov spaces making use of the sharp $\ell^p$ decoupling inequality for the cone recently established by Bourgain and Demeter \cite{BD}. In a different direction of development, we shall see that the direct approach allows us to see an additional gain of regularity if we restrict to initial data which are radially symmetric in the spatial variable. This argument uses the Funk--Hecke theorem, a result from classical harmonic analysis, and permits data which are rougher than $L^2(\mathbb{R}^d \times \mathbb{S}^{d-1})$, with regularity measured with respect to smoothing in the spherical variable.

Both approaches readily allow us to understand velocity averages over different sets $V$. The case where $V$ is the closed unit ball $\mathbb{B}^{d-1}$ (equipped with the Lebesgue measure) has also featured prominently in the literature on velocity averages. As will become clear in Section \ref{section:dual}, our dual approach will be used to see that the analogous estimate to \eqref{e:sobs} on $\mathbb{B}^{d-1}$ is equivalent to the $L^2_{t,x} \to L^q_tL^r_x$ boundedness of $\mathcal{C}^{\beta_-+\frac{d-1}{4}}$ (and thus the analogue of Theorem \ref{t:thm-sobs} holds with $\beta^*_+$ raised by $\frac{1}{2}$). In this sense, we can view the cases of the sphere and the unit ball as equivalent with a unified treatment. Our direct approach may also be used to obtain bounds over more general velocity domains and we illuminate this point at the end of Section \ref{subsection:directproof}.

The present work is a contribution to a large body of work on velocity averages in the context of kinetic equations. The papers \cite{BP} and \cite{BGRevista} already cited above have the most direct connection to our work. For comprehensive accounts of the original motivation for studying regularising properties of velocity averages, along with extensive summaries of the prior results, we refer the reader to the excellent surveys of Bouchut \cite{Bouchut} and Perthame \cite{PerthameBAMS}.

\section{Overview and organisation}
We have intentionally stated only a sample of our results in the introductory section. The current section provides a more detailed overview of our main contributions and allows us to clarify the structure.

\subsection*{Section \ref{section:pre}} We establish some notation and record some observations which will be used frequently throughout the paper. 

\subsection*{Section \ref{section:dual}} We present our dual approach to smoothing estimates for $\rho$ with $L^2$ initial data, beginning by allowing for the velocity domain and multiplier to be non-specific, then establishing a duality principle between such estimates and certain Fourier multiplier estimates. This culminates with a proof of a generalisation of Theorem \ref{t:coneequiv} which includes both $\mathbb{S}^{d-1}$ and $\mathbb{B}^{d-1}$ as the velocity domain as special cases; see Theorem \ref{t:genconeequiv}.

\subsection*{Section \ref{section:conethm}} We prove Theorem \ref{t:coneestimates} based on the duality principle from Section \ref{section:dual}, thus giving a proof of Theorem \ref{t:thm-sobs}. Using Theorem \ref{t:genconeequiv}, we shall in fact simultaneously give a proof of the analogue of Theorem \ref{t:thm-sobs} for $\mathbb{B}^{d-1}$; see Theorem \ref{t:thm-sobsgen}. We also establish certain weak type estimates for $\mathcal{C}^\alpha$ in the critical case $\alpha = \alpha^*$.

\subsection*{Section \ref{section:direct}} We present our direct approach to smoothing estimates for $\rho$. The natural setting for the argument is for initial data in an $L^p$-based Besov space and initially we illustrate how such estimates crucially depend on $L^p_{x,v} \to L^q_tL^r_x$ bounds for $\mathcal{C}_k \circ \rho$, where $\mathcal{C}_k$ is a Fourier multiplier operator supported in a $2^{-k}$ neighbourhood of the truncated cone. In particular, the range of $\beta_-$ is completely determined by the bound on this operator. When $p=2$ our argument leads to a direct proof of Theorem \ref{t:thm-sobs}, and for general $p \geq 2$, based on the sharp $\ell^p$ decoupling theorem of Bourgain and Demeter \cite{BD}, we establish smoothing estimates for initial data in the Besov space $\dot{B}^s_{p,2}$ (see Theorem \ref{t:Besov}).

\subsection*{Section \ref{section:furthers}} We present several further results and contextual remarks. As is clear from our discussion to this point, a feature of this paper is the exposing of links with contemporary aspects of harmonic analysis, and in particular the modern theory of Fourier multipliers and the restriction theory of the Fourier transform. Additional discussion along these lines appears in Section \ref{section:furthers}, where, for example, the affine-invariant endpoint multilinear Kakeya inequality (see \cite{BG} and \cite{CV}) is viewed as a null-form estimate for a certain multilinear variant of $\rho$. 

In a different direction, we use our direct approach in Section \ref{section:direct} to identify an improving effect obtainable by restricting to initial data which are assumed to possess some symmetry; in particular, radially symmetric with respect to the spatial variable, and independence with respect to the velocity variable. We are also able to use our duality principle to identify the optimal constant and fully address the existence and characterisation of extremisers for the estimate \eqref{e:sobs} whenever $(q,r) = (2,2)$ (see Theorem \ref{t:sharpgeneral}). Optimal constants and extremisers are also identified when restricting to initial data which are radial in the spatial variable (see Theorem \ref{t:sharpradial}).

\section{Notation and preliminaries} \label{section:pre}

\subsection{Notation} For space-time functions defined on $\mathbb{R}^{d+1}$ we consistently use the letter $g$, and for space-velocity functions defined on $\mathbb{R}^d \times V$, we consistently use the letter $f$. If a function is defined on $\mathbb{R}^d$ we use the letter $h$.

From now on, in the Lebesgue space notation, we shall usually drop the explicit reference to the underlying measure space (for example, $L^2_{x,v}$ will simply be written $L^2$ where possible). All norms are global and so there should be no confusion.

Regarding constants, we write $A \lesssim B$ or $B \gtrsim A$ to mean $A \leq C B$, where $C$ is a constant which is allowed to depend on $d$ and any exponents which are used to define the relevant function space in use, and $A \sim B$ means both $A \lesssim B$ and $B \lesssim A$ hold. %Also, we will write $A \approx B$ if $A = C B$, where $C$ is a constant which is allowed to depend on $d$ and any exponents which are used to define the relevant function space in use.

We introduce the Littlewood--Paley projection operators $(P_j)_{j \in \mathbb{Z}}$ given by
\[
\widehat{P_j h}(\xi) = \varphi(2^{-j}|\xi|)\widehat{h}(\xi)
\]
where $\varphi \in C^{\infty}_c(\mathbb{R})$ is supported in $[\frac{1}{2},2]$ and such that
\[
\sum_{j \in \mathbb{Z}} \varphi(2^{-j}|\xi|) = 1
\]
for all $\xi \neq 0$, and for appropriate functions $h$ on $\mathbb{R}^d$. We extend the definition of $P_j$ to space-time functions on $\mathbb{R}^{d} \times \mathbb{R}$ or  space-velocity functions on $\mathbb{R}^d \times V$ in the obvious manner acting on the spatial variable only.
%\[ \widehat{P_j f}(\xi,v) = \varphi(2^{-j}|\xi|)\widehat{f}(\xi,v)\]
The classical Littlewood--Paley inequality that we will use states that, for any $r \in (1,\infty)$ we have the equivalence
\begin{equation*} \label{e:LP}
\|h\|_{L^r(\mathbb{R}^d)} \sim \bigg\| \bigg( \sum_{j \in \mathbb{Z}} |P_jh|^2 \bigg)^{1/2} \bigg\|_{L^r(\mathbb{R}^d)}.
\end{equation*}
For a proof and further discussion we refer the reader to \cite{Stein}.

The annulus $\{ \xi \in \mathbb{R}^d : |\xi| \in [\frac{1}{2},2]\}$ will be denoted by $\mathfrak{A}_0$ and the indicator function of the set $S$ will be written as $\mathbf{1}_S$, and $x_+ = \max\{x,0\}$. Also, the Fourier transform of an integrable function $\varphi : \mathbb{R}^n \to \mathbb{C}$ is defined to be
\[
\widehat{\varphi}(\xi) = \int_{\mathbb{R}^n} \varphi(x) e^{-ix\cdot \xi} \, \mathrm{d}x
\]
for $\xi \in \mathbb{R}^n$. Thus, we use the same $\,\,\widehat{\,\,}\,\,$ notation for functions depending on $x$, or $(x,t)$, or $(x,v)$. In the latter case of space-velocity functions, the meaning is that the Fourier transform is taken only with respect to the spatial variable. Also, sometimes it is convenient to write $\mathcal{F}\varphi = \widehat{\varphi}$.

\subsection{Preliminary observations}
For general velocity domains, the Fourier transform of $\rho f$ is easily computed and we obtain the expression
\begin{equation} \label{e:rhofhat}
\widehat{\rho f}(\xi,\tau) = 2\pi \int_{V} \delta(v \cdot \xi + \tau) \widehat{f}(\xi,v) \, \mathrm{d}\mu(v).
\end{equation}

Now suppose $V = \mathbb{S}^{d-1}$ with Lebesgue measure. Clearly $\widehat{\rho f}$ is supported in the region
\[
\mathfrak{C} := \{ (\xi,\tau) \in \mathbb{R}^{d+1} : |\tau| \leq |\xi| \}
\]
and this fact plays an important role in the analysis.

It will also be helpful to note here that the dual operator $\rho^*$ is given by
\begin{equation} \label{e:rhostar}
\rho^*g(x,v) = \int_{\mathbb{R}} g(x + tv,t)\, \mathrm{d}t
\end{equation}
and hence
\begin{equation} \label{e:rhostarhat}
\widehat{\rho^* g}(\xi,v) = \widehat{g}(\xi,-\xi \cdot v).
\end{equation}

The relevance of the multipliers $D_+$ and $D_-$ may be seen by considering initial data which are independent of the velocity variable. In this case, we may explicitly calculate the above integral in \eqref{e:rhofhat} over $\mathbb{S}^{d-1}$ using the following.
\begin{lemma} \label{l:multiplierroot}
For every $(\tau,\xi) \in \mathbb{R}^{d+1}$ with $\xi \neq 0$ we have
\[
\int_{\mathbb{S}^{d-1}} \delta(v \cdot \xi + \tau)  \, \mathrm{d}\sigma(v) = |\mathbb{S}^{d-2}| \frac{\mathbf{1}_\mathfrak{C}(\xi,\tau)}{|\xi|} \bigg(1-\frac{\tau^2}{|\xi|^2}\bigg)^{\frac{d-3}{2}}.
\]
\end{lemma}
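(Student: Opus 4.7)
My plan is to exploit rotational symmetry to reduce the integral to a one-dimensional slicing computation on the sphere, and then use the standard scaling rule for the delta function.

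First, since $\mathrm{d}\sigma$ is rotation-invariant and the integrand depends only on $v \cdot \xi$, I can assume without loss of generality that $\xi = |\xi| e_d$, in which case $v \cdot \xi = |\xi| v_d$. Thus the left-hand side becomes
\begin{equation*}
\int_{\mathbb{S}^{d-1}} \delta(|\xi| v_d + \tau) \, \mathrm{d}\sigma(v).
\end{equation*}
Next, I would slice the sphere by hyperplanes $\{v_d = s\}$: writing $v = (\sqrt{1-s^2}\,\omega, s)$ with $s \in (-1,1)$ and $\omega \in \mathbb{S}^{d-2}$, the surface measure factorises as
\begin{equation*}
\mathrm{d}\sigma(v) = (1-s^2)^{\frac{d-3}{2}} \, \mathrm{d}s \, \mathrm{d}\sigma_{d-2}(\omega).
\end{equation*}
After integrating trivially in $\omega$, I am reduced to $|\mathbb{S}^{d-2}| \int_{-1}^1 \delta(|\xi| s + \tau)(1-s^2)^{(d-3)/2}\,\mathrm{d}s$.

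Now I would apply the elementary scaling identity $\delta(|\xi| s + \tau) = |\xi|^{-1}\delta(s + \tau/|\xi|)$ (using $\xi \neq 0$) and evaluate the remaining integral. If $|\tau| < |\xi|$, then $s_0 := -\tau/|\xi|$ lies in $(-1,1)$ and the delta picks out the value of $(1-s^2)^{(d-3)/2}$ at $s_0$, producing $(1 - \tau^2/|\xi|^2)^{(d-3)/2}$. If instead $|\tau| > |\xi|$, then $s_0$ lies outside the interval of integration and the integral vanishes, accounting for the indicator $\mathbf{1}_\mathfrak{C}(\xi,\tau)$.

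There is no serious obstacle here; the only points requiring a small comment are (i) justifying the slicing formula for the spherical measure, which is standard via the coarea formula or direct parametrisation, and (ii) noting that for $d=2$ the exponent $(d-3)/2 = -1/2$ is negative, so the identity should be read in the sense of distributions (testing against a smooth function of $\tau$ confirms that the formal manipulation above is legitimate since $(1-s^2)^{-1/2}$ is integrable near $s=\pm 1$). Combining these observations yields the claimed identity.
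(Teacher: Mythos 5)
Your argument is correct and is essentially the paper's own proof: rotation invariance plus the homogeneity of $\delta$ to pull out the factor $1/|\xi|$, followed by slicing the sphere (your variable $s$ corresponds, via $s=\cos\theta$, to the paper's parametrisation $v=(\tilde v\sin\theta,\cos\theta)$), which yields the factor $(1-\tau^2/|\xi|^2)^{(d-3)/2}$ together with the indicator of $\mathfrak{C}$. Your extra remarks on the $d=2$ distributional reading are a harmless refinement of the same computation.
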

\begin{proof}
We use rotation invariance and homogeneity to write
\[
\int_{\mathbb{S}^{d-1}} \delta(v \cdot \xi + \tau)  \, \mathrm{d}\sigma(v) = \frac{1}{|\xi|}\int_{\mathbb{S}^{d-1}} \delta(v \cdot e_d + \tfrac{\tau}{|\xi|})  \, \mathrm{d}\sigma(v)
\]
where $e_d$ is the $d$th standard basis vector in $\mathbb{R}^d$. Parametrising $\mathbb{S}^{d-1}$ by $v = (\tilde{v}\sin \theta, \cos \theta)$ for $\tilde{v} \in \mathbb{S}^{d-2}$ and $\theta \in [0,\pi]$ we obtain the claimed expression.
\end{proof}

Using Lemma \ref{l:multiplierroot} and rotation invariance, one may write
\begin{equation} \label{e:nodeltarep}
\widehat{\rho f}(\xi,\tau) = \frac{2\pi\mathbf{1}_\mathfrak{C}(\xi,\tau)}{(|\xi|^2 - \tau^2)^{1/2}} \int_{\Sigma_{\xi,\tau}} \widehat{f}(\xi,v) \, \mathrm{d}\sigma_{\xi,\tau}(v)
\end{equation}
where $\Sigma_{\xi,\tau} := \{ v \in \mathbb{S}^{d-1} : v \cdot \xi + \tau = 0\}$ is a slice of $\mathbb{S}^{d-1}$ by a hyperplane with normal direction given by $\xi$, and $\sigma_{\xi,\tau}$ is the induced surface measure. This alternative representation of $\widehat{\rho f}$ will sometimes be more convenient than \eqref{e:rhofhat}.

\section{Approach I : Dual analysis} \label{section:dual}

\subsection{A duality principle}
We begin by considering the velocity domain $V$ equipped with the measure $\mathrm{d}\mu(v) = w(v) \, \mathrm{d}v$ for some compactly supported function $w$ on $V$, and the corresponding velocity averaging operator
\[
\rho f(x,t) = \int_V f(x-tv,v) \, \mathrm{d}\mu(v).
\]
The duality principle that we would like to expose concerns smoothing estimates for $\rho$ of the form
\begin{equation} \label{e:generalsmoothing}
\| \mathcal{F}^{-1}(m \widehat{\rho f}) \|_{L^q_tL^r_x} \leq \mathbf{C} \| f \|_{L^2}
\end{equation}
for an appropriate Fourier multiplier $m$, and the $L^2 \to L^q_tL^r_x$ boundedness of the associated multiplier $m_\mu$ given by
\begin{equation} \label{e:mmudefn}
m_\mu(\xi,\tau) = m(\xi,\tau) \big(\tfrac{1}{|\xi|}\mathcal{R}w(-\tfrac{\tau}{|\xi|},\tfrac{\xi}{|\xi|}) \big)^{1/2}.
\end{equation}
The notation $\|f\|_{L^2}$ means that the integration in the $v$-variable is taken with respect to the measure $\mu$, and $\mathcal{R}$ is the Radon transform given by
\begin{equation*} \label{e:Radon}
\mathcal{R}w(r,\theta) = \int_{V} w(y) \delta(y \cdot \theta - r) \, \mathrm{d}y
\end{equation*}
averaging over the hyperplane $\{ y \in \mathbb{R}^d : y \cdot \theta = r\}$ for fixed $(r,\theta) \in \mathbb{R} \times \mathbb{S}^{d-1}$. Also, we use the boldface notation $\mathbf{C}$ for the optimal constant in \eqref{e:generalsmoothing}.

First, note that \eqref{e:generalsmoothing} is dual to the estimate
\begin{equation*}
\| \rho^* \mathcal{F}^{-1}(m\widehat{g}) \|_{L^2} \leq \mathbf{C} \|g\|_{L^{q'}_t L^{r'}_x}
\end{equation*}
where the dual operator $\rho^*$ is given by \eqref{e:rhostar}. Using \eqref{e:rhostarhat} we easily obtain
\begin{align*}
\| \rho^* \mathcal{F}^{-1}(m\widehat{g}) \|_{L^2}^2 & = \frac{1}{(2\pi)^d} \int_{\mathbb{R}^d} \int_{\mathbb{R}}  |m(\xi,\tau)|^2 |\widehat{g}(\xi,\tau)|^2 \int_V \delta(\tau + v \cdot \xi) w(v) \, \mathrm{d}v \mathrm{d}\tau\mathrm{d}\xi
\end{align*}
and hence
\begin{equation*} \label{e:mainpoint}
\| \rho^* \mathcal{F}^{-1}(m\widehat{g}) \|_{L^2}^2 = \frac{1}{(2\pi)^{d}}\| m_\mu \widehat{g} \|_{L^2}^2 = 2\pi \| \mathcal{F}^{-1}(m_\mu\widehat{g})\|_{L^2}^2.
\end{equation*}
This means that \eqref{e:generalsmoothing} is equivalent to the $L^{q'}_t L^{r'}_x \to L^2$ boundedness of the Fourier multiplier $m_\mu$, and by a further duality, we have proved the following.
\begin{theorem}[Duality Principle] \label{t:dualityprinciple}
The smoothing estimate
\begin{equation*}
\| \mathcal{F}^{-1}(m \widehat{\rho f}) \|_{L^q_tL^r_x} \leq \mathbf{C} \| f \|_{L^2}
\end{equation*}
holds if and only if $m_\mu$ is a bounded Fourier multiplier from $L^2$ to $L^q_tL^r_x$. Moreover, the optimal constant $\mathbf{C}$ is such that $(2\pi)^{-1/2}\mathbf{C}$ coincides with the operator norm of the Fourier multiplier associated with $m_\mu$ as a mapping from $L^2$ to $L^q_tL^r_x$. 
\end{theorem}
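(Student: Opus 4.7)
The plan is to pass between the estimate in \eqref{e:generalsmoothing} and its dual, and to compute the $L^2$ norm of the dual composition $\rho^* \mathcal{F}^{-1}(m\widehat{g})$ explicitly via Plancherel. The heart of the matter is recognising that this $L^2$ computation produces, up to a universal constant, exactly the multiplier $m_\mu$ acting on $\widehat{g}$.

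First I would invoke the usual duality pairing: the smoothing estimate \eqref{e:generalsmoothing} with optimal constant $\mathbf{C}$ is equivalent to
\[
\| \rho^* \mathcal{F}^{-1}(m\widehat{g}) \|_{L^2} \leq \mathbf{C} \|g\|_{L^{q'}_t L^{r'}_x}
\]
with the same optimal constant, since $f \mapsto \mathcal{F}^{-1}(m \widehat{\rho f})$ has formal adjoint $g \mapsto \rho^*\mathcal{F}^{-1}(\overline{m}\widehat{g})$ (after standard manipulations with $\mathcal{F}$), and $|m|$ plays no role in the operator norm.

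Next I would apply Plancherel in the spatial variable and use \eqref{e:rhostarhat} to write
\[
\| \rho^* \mathcal{F}^{-1}(m\widehat{g}) \|_{L^2}^2 = \frac{1}{(2\pi)^d}\int_{\mathbb{R}^d} \int_V |m(\xi,-\xi\cdot v)|^2 |\widehat{g}(\xi,-\xi\cdot v)|^2 w(v) \, \mathrm{d}v \, \mathrm{d}\xi.
\]
For each fixed $\xi$, I would change variables from $v \in V$ to $(\tau,v')$ where $\tau = -\xi\cdot v$ and $v'$ parametrises the hyperplane $\{y \cdot \xi/|\xi| = -\tau/|\xi|\}$. Equivalently, I insert a $\delta$-function $\delta(\tau + \xi \cdot v)$ and integrate in $\tau$, which reproduces the definition of the Radon transform of $w$ in direction $\xi/|\xi|$ at level $-\tau/|\xi|$, together with a Jacobian of $1/|\xi|$. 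This yields exactly $|m_\mu(\xi,\tau)|^2$ as defined in \eqref{e:mmudefn}, so by Plancherel again
\[
\| \rho^* \mathcal{F}^{-1}(m\widehat{g}) \|_{L^2}^2 = \frac{1}{(2\pi)^d}\| m_\mu \widehat{g}\|_{L^2}^2 = 2\pi \| \mathcal{F}^{-1}(m_\mu \widehat{g})\|_{L^2}^2.
\]

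Finally I would translate: the dual estimate above becomes
\[
\| \mathcal{F}^{-1}(m_\mu \widehat{g}) \|_{L^2} \leq (2\pi)^{-1/2}\mathbf{C}\, \|g\|_{L^{q'}_t L^{r'}_x},
\]
and a second duality (now on the multiplier operator itself) produces the claimed $L^2 \to L^q_t L^r_x$ bound with operator norm $(2\pi)^{-1/2}\mathbf{C}$. Both implications are reversible since every step is an identity or a standard duality, giving the two-sided statement and the identification of the optimal constant. The only delicate point is arithmetic bookkeeping of the $(2\pi)$ factors arising from the convention for $\widehat{\,\cdot\,}$ declared in Section \ref{section:pre}; I would handle this by invoking Plancherel in the precise form $\|h\|_{L^2}^2 = (2\pi)^{-d}\|\widehat{h}\|_{L^2}^2$ at both applications, which matches the factor shown above.
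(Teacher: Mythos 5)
Your proposal is correct and follows essentially the same route as the paper: dualise \eqref{e:generalsmoothing} to an estimate for $\rho^*\mathcal{F}^{-1}(m\widehat{g})$, compute its $L^2$ norm via \eqref{e:rhostarhat} and Plancherel so that the Radon transform factor in \eqref{e:mmudefn} appears with the correct $1/|\xi|$ Jacobian, and then dualise once more, tracking the factor $2\pi$ exactly as in the paper. The only difference is expository (you insert the $\delta$-function after integrating in $v$ rather than before), which does not change the argument.
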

A particular instance of this duality principle can be found in work of Bouchut \cite{Bouchut} (see Proposition 7.1) corresponding to the case where $(q,r) = (2,2)$ and classical Sobolev smoothing.

In the case of a radially symmetric measure supported inside the unit ball $\mathbb{B}^{d-1}$, the following expression for the Radon transform will be convenient.
\begin{proposition} \label{p:radialRadon}
If $w(v) = \tilde{w}(|v|)\mathbf{1}_{[0,1]}(|v|)$, then
\[
\mathcal{R}w(r,\theta) = |\mathbb{S}^{d-2}| \mathbf{1}_{[-1,1]}(r) \int_{|r|}^1 \tilde{w}(s) s^{d-2} (1 - \tfrac{r^2}{s^2})^{\frac{d-3}{2}} \, \mathrm{d}s
\]
for each $(r,\theta) \in \mathbb{R} \times \mathbb{S}^{d-1}$.
\end{proposition}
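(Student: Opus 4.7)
The proof is essentially a direct calculation using rotation invariance and a change of variables. The plan is as follows.

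First, since $w$ is radial, the Radon transform $\mathcal{R}w(r,\theta)$ is independent of $\theta\in\mathbb{S}^{d-1}$, so I would without loss of generality set $\theta=e_d$ and work with
\[
\mathcal{R}w(r,e_d)=\int_{\mathbb{R}^d}\tilde w(|y|)\mathbf{1}_{[0,1]}(|y|)\,\delta(y_d-r)\,\mathrm{d}y.
\]
Writing $y=(y',y_d)$ with $y'\in\mathbb{R}^{d-1}$, the delta function collapses the $y_d$-integration to $y_d=r$, and the indicator forces $|r|\le 1$, producing the factor $\mathbf{1}_{[-1,1]}(r)$ and leaving
\[
\mathcal{R}w(r,e_d)=\mathbf{1}_{[-1,1]}(r)\int_{\mathbb{R}^{d-1}}\tilde w\bigl(\sqrt{|y'|^2+r^2}\bigr)\mathbf{1}_{[0,1]}\bigl(\sqrt{|y'|^2+r^2}\bigr)\,\mathrm{d}y'.
\]

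Next, I would pass to polar coordinates in $\mathbb{R}^{d-1}$, introducing $u=|y'|\in[0,\infty)$ and picking up the factor $|\mathbb{S}^{d-2}|u^{d-2}$. Finally, the substitution $s=\sqrt{u^2+r^2}$ (so that $s\,\mathrm{d}s=u\,\mathrm{d}u$ and $u^{d-3}=(s^2-r^2)^{(d-3)/2}$) converts the integral to the range $s\in[|r|,1]$ and gives
\[
|\mathbb{S}^{d-2}|\int_{|r|}^1 \tilde w(s)(s^2-r^2)^{(d-3)/2}\,s\,\mathrm{d}s,
\]
which, upon extracting $s^{d-3}$ to produce $(1-r^2/s^2)^{(d-3)/2}$, matches the claimed formula.

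There is no real obstacle here; the only point that requires a small amount of care is the case $d=2$, where the exponent $(d-3)/2=-1/2$ produces an integrable singularity at the lower endpoint $s=|r|$, and the case $d=3$, where the factor is identically $1$. The computation above is valid in all dimensions $d\ge 2$ by interpreting the integral in the standard (Lebesgue) sense. Strictly speaking, one should also justify the initial manipulation of the Dirac delta; this can be done either by the classical slicing argument (approximating $\delta$ by a bump and passing to the limit) or by recognising the Radon transform of an integrable radial function as exactly the $(d-1)$-dimensional integral over the hyperplane $\{y\cdot\theta=r\}$, which is the standard definition.
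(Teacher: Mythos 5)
Your proposal is correct and amounts to the same direct computation as the paper's: the paper passes to polar coordinates in $\mathbb{R}^d$ and evaluates the resulting one-dimensional delta in the angular variable, whereas you first collapse the delta onto the hyperplane $y_d=r$ and then use polar coordinates in $\mathbb{R}^{d-1}$ with the substitution $s=\sqrt{u^2+r^2}$; the two orders of integration are equivalent and yield the identical formula. Your remarks on the integrable singularity when $d=2$ and on justifying the delta manipulation are fine but not essential.
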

\begin{proof}
For $r \in [0,1]$, using polar coordinates we get
\begin{align*}
\mathcal{R}w(r,\theta) = |\mathbb{S}^{d-2}| \int_0^1 \int_{-1}^1 \tilde{w}(s) \delta(s\lambda - r)(1-\lambda^2)^{\frac{d-3}{2}}s^{d-1} \, \mathrm{d}\lambda \mathrm{d}s
\end{align*}
and the claimed expression follows.
\end{proof}

The cases of primary interest are $V = \mathbb{S}^{d-1}$ or $V = \mathbb{B}^{d-1}$ equipped with the induced Lebesgue measures and
\[
m(\xi,\tau) =  (|\xi| + |\tau|)^{\beta_+}  | |\xi| - |\tau| |^{\beta_-}.
\]
These cases may be unified by considering the case where
\begin{equation} \label{e:wkappa}
w_\kappa(v) = \frac{1}{\Gamma(1+\kappa)} (1-|v|^2)^{\kappa} \mathbf{1}_{[0,1]}(|v|)
\end{equation}
for $\kappa \in [-1,0]$; we let $\mathrm{d}\mu_\kappa(v) = w_\kappa(v) \, \mathrm{d}v$ and, to avoid double subscripts, we write $m_\kappa$ for the associated multiplier given by \eqref{e:mmudefn}. This family of measures naturally unifies the cases of interest since $\mu_{-1} = \frac{1}{2}\sigma(v)$ and $\mathrm{d}\mu_0(v) = \mathbf{1}_{\mathbb{B}^{d-1}}(v)\,\mathrm{d}v$; of course, $\mu_{-1}$ is a singular measure and so this ceases to directly fall under the scope of the above discussion; however a limiting argument allows us to make sense of $\mathcal{R}w_{-1}$ and we obtain
\[
\mathcal{R}w_{-1}(r,\theta) = \tfrac{1}{2}|\mathbb{S}^{d-2}|  (1 - r^2)_+^{\frac{d-3}{2}}
\]
and therefore
\begin{equation} \label{e:m-1}
m_{-1}(\xi,\tau) = (\tfrac{1}{2}|\mathbb{S}^{d-2}|)^{1/2} |\xi|^{\beta_+ + \beta_- - \frac{1}{2}}  \bigg(1 +  \frac{|\tau|}{|\xi|}\bigg)^{\beta_+ - \beta_-} \bigg(1 - \frac{\tau^2}{|\xi|^2}\bigg)_+^{\beta_- + \frac{d-3}{4}}.
\end{equation}
\begin{remark}
The limiting argument we refer to in order to obtain the above formula for $\mathcal{R}w_{-1}$ is already present in Lemma \ref{l:multiplierroot}. The operator $\mathcal{R}$ acting on more general singular measures can be shown to be well defined and we refer the reader to \cite{Mattila} for further details on such sliced measures. 
\end{remark}

For general $\kappa \in [-1,0]$ we use Proposition \ref{p:radialRadon} followed by elementary changes of variables to write
\begin{align*}
\mathcal{R}w_\kappa(r,\theta) & = \mathbf{1}_{[-1,1]}(r)\frac{|\mathbb{S}^{d-2}|}{\Gamma(1 + \kappa)} \int_{|r|}^1 (1-s^2)^\kappa (s^2 - r^2)^{\frac{d-3}{2}} \, s\mathrm{d}s \\
& = \frac{|\mathbb{S}^{d-2}|}{2\Gamma(1 + \kappa)} (1 - r^2)_+^{\kappa + \frac{d-1}{2}} \mathrm{B}(1 + \kappa, \tfrac{d-1}{2}).
\end{align*}
The beta function $\mathrm{B}$ satisfies the identity $\Gamma(x+y) \mathrm{B}(x,y) = \Gamma(x)\Gamma(y)$, hence
\begin{equation*}
\mathcal{R}w_\kappa(r,\theta) = \frac{|\mathbb{S}^{d-2}|\Gamma(\frac{d-1}{2})}{2\Gamma(\frac{d+1}{2} + \kappa)} (1-r^2)_+^{\kappa + \frac{d-1}{2}}
\end{equation*}
and whence
\begin{equation} \label{e:mgamma}
m_\kappa(\xi,\tau) = \bigg(\frac{|\mathbb{S}^{d-2}|\Gamma(\frac{d-1}{2})}{2\Gamma(\frac{d+1}{2} + \kappa)}\bigg)^{1/2} |\xi|^{\beta_+ + \beta_- - \frac{1}{2}}  \bigg(1 +  \frac{|\tau|}{|\xi|}\bigg)^{\beta_+ - \beta_-} \bigg(1 - \frac{\tau^2}{|\xi|^2}\bigg)_+^{\alpha}
\end{equation}
where $\alpha = \beta_- + \frac{\kappa}{2} + \frac{d-1}{4}$.

\begin{remark}
Observe that we are now in a position to immediately give a clear picture of when \eqref{e:sobs} holds in the case $(q,r) = (2,2)$. Indeed, by Theorem \ref{t:dualityprinciple}, \eqref{e:sobs} holds if and only if $m_{-1}$ is a bounded multiplier $L^2 \to L^2$, that is to say, $m_{-1}$ is a bounded function on $\mathbb{R}^{d+1}$. From \eqref{e:m-1}, clearly this is the case if and only if $\beta_+ + \beta_- = \frac{1}{2}$ and $\beta_- + \frac{d-3}{4} \geq 0$. This approach based on the duality principle provides an alternative to that given in \cite{BP} and \cite{BGRevista}. Of course, \eqref{e:m-1} also brings to light the link to the cone multiplier $\mathcal{C}^{\beta_- + \frac{d-3}{4}}$ and this forms the basis, along with Theorem \ref{t:dualityprinciple}, for our proof of Theorem \ref{t:coneequiv}.
\end{remark}
More generally, we prove the following for the velocity average $\rho_\kappa$ given by
\[
\rho_\kappa f(x,t) = \int_{\mathbb{R}^d} f(x-tv,v)\,\mathrm{d}\mu_\kappa(v).
\]
\begin{theorem} \label{t:genconeequiv}
Let $d \geq 2$, $q,r\in [2,\infty)$, $\kappa \in [-1,0]$ and suppose $\beta_+$, $\beta_-$ satisfy \eqref{e:scaling}. Then the estimate
\begin{equation} \label{e:sobsgen}
\| \diffp \diffn  \rho_\kappa f\|_{L^q_tL_x^r}\lesssim \|f\|_{L^2}
\end{equation}
holds if and only if $\mathcal{C}^{\alpha}$ is $L^2 \to L^q_tL^r_x$ bounded, where $\alpha = \beta_- + \frac{\kappa}{2} + \frac{d-1}{4}$.
\end{theorem}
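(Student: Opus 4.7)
The plan is to apply the Duality Principle (Theorem~\ref{t:dualityprinciple}) with $m(\xi,\tau) = (|\xi|+|\tau|)^{\beta_+}\bigl||\xi|-|\tau|\bigr|^{\beta_-}$ and $\mu = \mu_\kappa$. This reduces \eqref{e:sobsgen} to the $L^2 \to \lqr$ boundedness of the Fourier multiplier $m_\kappa$, whose explicit form is supplied by \eqref{e:mgamma}. The remaining task is then to show that this multiplier boundedness is equivalent to that of the cone multiplier $\mathcal{C}^\alpha$, where $\alpha = \beta_- + \frac{\kappa}{2} + \frac{d-1}{4}$.

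From \eqref{e:mgamma}, $m_\kappa(\xi,\tau)$ factors as the product of three pieces: a homogeneous scaling factor $|\xi|^{\beta_++\beta_--\frac{1}{2}}$; a $0$-homogeneous factor $(1+|\tau|/|\xi|)^{\beta_+-\beta_-}$ which is smooth on $\xi\neq 0$ and bounded above and below by positive constants on the region $|\tau|\leq|\xi|$; and the cone multiplier symbol $(1-\tau^2/|\xi|^2)_{+}^{\alpha}$. By \eqref{e:scaling}, the exponent of the first factor equals $\frac{d}{r}+\frac{1}{q}-\frac{d+1}{2}$. For the ``if'' direction I would perform a spatial Littlewood--Paley decomposition $g = \sum_j P_j g$; a direct change of variables for each dyadic piece yields the scaling identity
\[
\|\mathcal{C}^\alpha_j h\|_{\lqr} \lesssim 2^{j\bigl(\frac{d+1}{2}-\frac{d}{r}-\frac{1}{q}\bigr)}\|h\|_{L^2}
\]
for the cone multiplier localised to $|\xi|\sim 2^j$, starting from the assumed $L^2\to\lqr$ bound for $\mathcal{C}^\alpha$. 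Combined with the $2^{j(\beta_++\beta_--\frac{1}{2})}$ coming from the homogeneous factor on that annulus, the exponents cancel exactly by \eqref{e:scaling}, giving $\|m_\kappa P_j g\|_{\lqr} \lesssim \|P_j g\|_{L^2}$ uniformly in $j$. Since $q,r\in[2,\infty)$, the spatial Littlewood--Paley inequality followed by Minkowski's inequality (valid because $q,r\geq 2$) and Plancherel then sum the dyadic pieces to produce $\|m_\kappa g\|_{\lqr} \lesssim \|g\|_{L^2}$.

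For the converse, I would compose $m_\kappa$ with a Littlewood--Paley projection to the shell $|\xi|\sim 1$; on this support the first two factors are bounded above and below, so the resulting operator has multiplier equal to the symbol of $\mathcal{C}^\alpha$ times a smooth, nonvanishing, bounded cutoff. Inverting this cutoff against a compactly supported Schwartz bump adapted to the cone support recovers $\mathcal{C}^\alpha$ as an $L^2\to\lqr$ bounded operator, since such a Schwartz multiplier is bounded on every $\lqr$. The main technical point to watch is the careful handling of the factor $(1+|\tau|/|\xi|)^{\beta_+-\beta_-}$ in the inversion: one must choose the cutoffs so that they are supported inside $|\tau|\leq|\xi|$, where this factor is bounded away from $0$ and $\infty$. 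In the endpoint case $\kappa = -1$ the measure $\mu_{-1}$ is singular, but as noted in the remark following \eqref{e:m-1}, the formula for $m_\kappa$ still applies via a limiting argument and the entire scheme proceeds without modification.
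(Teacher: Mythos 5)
Your overall route is the same as the paper's: apply Theorem \ref{t:dualityprinciple} with $m(\xi,\tau)=(|\xi|+|\tau|)^{\beta_+}\bigl||\xi|-|\tau|\bigr|^{\beta_-}$ to reduce \eqref{e:sobsgen} to the $L^2\to\lqr$ boundedness of $m_\kappa$ in \eqref{e:mgamma}, and then pass between $m_\kappa$ and $\mathcal{C}^\alpha$ by stripping off the factors $|\xi|^{\beta_++\beta_--\frac{1}{2}}$ and $(1+|\tau|/|\xi|)^{\beta_+-\beta_-}$ at unit frequency and recovering all frequencies by rescaling and Littlewood--Paley; your scaling arithmetic, with the exact cancellation forced by \eqref{e:scaling}, matches the paper's.

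The gap is in how you justify attaching or discarding those two extra factors. For $(q,r)\neq(2,2)$, the observation that a symbol is ``bounded above and below by positive constants'' on the relevant support does not let you remove it from an $L^2\to\lqr$ multiplier bound: pointwise control of a symbol only transfers $L^2\to L^2$ estimates. The mechanism that actually works --- and the one the paper uses in both directions --- is that the auxiliary symbol (the quotient of the two symbols, localised by $\phi(|\xi|)$ and by a cutoff in $\tau$, which is legitimate since $|\tau|\le|\xi|\lesssim 1$ on the support of the cone factor) has inverse Fourier transform in $L^1(\mathbb{R}^{d+1})$, so multiplication by it is convolution with an integrable kernel and is bounded on every $\lqr$ by Young's inequality. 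You invoke essentially this in your converse direction via the compactly supported bump, but it must equally be invoked on each rescaled dyadic block in your ``if'' direction, where at present you rely only on the two-sided bound of the symbol. Two smaller corrections: $(1+|\tau|/|\xi|)^{\beta_+-\beta_-}$ is only Lipschitz at $\tau=0$, not smooth (harmless: the corner still yields a kernel decaying like $\langle t\rangle^{-2}$ in time and rapidly in space, hence integrable), and there is no need to require your cutoffs to be supported inside $\{|\tau|\le|\xi|\}$ --- the factor $(1-\tau^2/|\xi|^2)_+^{\alpha}$ already present in $m_\kappa$ and in the symbol of $\mathcal{C}^\alpha$ enforces that support, so the auxiliary factor may simply be extended to a full neighbourhood. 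With these repairs your argument coincides with the paper's proof of Theorem \ref{t:genconeequiv}.
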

\begin{proof}
From Theorem \ref{t:dualityprinciple}, it suffices to show that the $L^2 \to L^q_tL^r_x$ boundedness of the Fourier multiplier $m_\kappa$ and $\mathcal{C}^\alpha$ are equivalent. Obviously
$
\mathcal{F}(\mathcal{C}^\alpha g) = m_\kappa \tilde{m} \widehat{g}
$
where
\[
\tilde{m}(\xi,\tau) = C_{d,\kappa}\mathbf{1}_\mathfrak{C}(\xi,\tau)\phi(|\xi|) |\xi|^{-\beta_+ - \beta_- + \frac{1}{2}} \bigg(1 + \frac{|\tau|}{|\xi|}\bigg)^{\beta_- - \beta_+}
\]
and $C_{d,\kappa}$ is some constant. It follows that $\mathcal{F}^{-1} \tilde{m} \in L^1$ and hence the $L^2 \to L^q_tL^r_x$ boundedness of $\mathcal{C}^\alpha$ follows from the $L^2 \to L^q_tL^r_x$ boundedness of the Fourier multiplier $m_\kappa$.

Conversely, if we assume that $\mathcal{C}^\alpha$ is $L^2 \to L^q_tL^r_x$ bounded, a similar argument shows that
\begin{equation} \label{e:mgamma0}
\| \mathcal{F}^{-1} (m_\kappa \widehat{P_0g})\|_{L^q_tL^r_x} \lesssim \|P_0g\|_{L^2}
\end{equation}
where, in general, $\widehat{P_j g}(\xi,\tau) = \phi(2^{-j}|\xi|)\widehat{g}(\xi,\tau)$ is the $j$th Littlewood--Paley projection operator. Since
\[
\mathcal{F}^{-1}(m_\kappa \widehat{P_jg})(x,t) = 2^{(\beta_+ + \beta_- + d + \frac{1}{2})j} \mathcal{F}^{-1}(m_\kappa \widehat{P_0g_j})(2^jx2^jt)
\]
where $\widehat{g_j}(\xi,\tau) = \widehat{g}(2^j\xi,2^j\tau)$, we see that \eqref{e:mgamma0} implies
\begin{equation*}
\| \mathcal{F}^{-1} (m_\kappa \widehat{P_jg})\|_{L^q_tL^r_x} \lesssim \|P_jg\|_{L^2}
\end{equation*}
for all $j \in \mathbb{Z}$. Since $q,r \in [2,\infty)$, it now follows from \eqref{e:LP} that the Fourier multiplier $m_\kappa$ is bounded $L^2 \to L^q_tL^r_x$.
\end{proof}

As a consequence of Theorem \ref{t:coneestimates} (to be proved in the forthcoming section), we obtain the following generalisation of Theorem \ref{t:thm-sobs} given in terms of the threshold
\[
\beta^*_-(\kappa) = \max\bigg\{ \frac{1}{q} + \frac{d-1}{2r} - \frac{d + \kappa}{2}, -\frac{d + 1 + 2\kappa}{4} \bigg\}.
\]
\begin{theorem}\label{t:thm-sobsgen} Let $d \geq 2$, $q,r\in [2,\infty)$, $\kappa \in [-1,0]$ and suppose $\beta_+$, $\beta_-$ satisfy \eqref{e:scaling}.
\begin{enumerate}
\item Suppose $\frac{1}{q} \leq \frac{d-1}{2}(\frac{1}{2} - \frac{1}{r})$. Then \eqref{e:sobsgen} holds if and only if $\beta_- > \beta^*_-(\kappa)$.
\item Suppose $\frac{1}{q} > \frac{d-1}{2}(\frac{1}{2} - \frac{1}{r})$. Then \eqref{e:sobsgen} holds if $\beta_- > \beta^*_-(\kappa)$ and fails if $\beta_- < \beta^*_-(\kappa)$.
\end{enumerate}
\end{theorem}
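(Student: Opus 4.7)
The plan is to derive Theorem \ref{t:thm-sobsgen} directly from the two previously-established results, namely the Duality Principle incarnated as Theorem \ref{t:genconeequiv} and the sharp mixed-norm bounds for the cone multiplier in Theorem \ref{t:coneestimates}. Since the paper positions these as the main building blocks, I expect the remaining work to be essentially a change of parameters.

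First I would invoke Theorem \ref{t:genconeequiv}, which says that \eqref{e:sobsgen} is equivalent to the $L^2 \to L^q_tL^r_x$ boundedness of $\mathcal{C}^{\alpha}$ with $\alpha = \beta_- + \tfrac{\kappa}{2} + \tfrac{d-1}{4}$. Then Theorem \ref{t:coneestimates} reduces the problem to comparing $\alpha$ with the threshold $\alpha^*(q,r) = \max\{\tfrac{1}{q} + \tfrac{d-1}{2r} - \tfrac{d+1}{4}, -\tfrac{1}{2}\}$: on wave-admissible $(q,r)$ one has the sharp equivalence $\alpha > \alpha^*$, and off the wave-admissible region the sufficient condition $\alpha > \alpha^*$ and the necessary condition $\alpha \geq \alpha^*$ follow from the same theorem. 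The two cases in Theorem \ref{t:thm-sobsgen} map exactly onto the two cases in Theorem \ref{t:coneestimates}.

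It then remains to verify the algebraic identity
\[
\alpha^*(q,r) - \frac{\kappa}{2} - \frac{d-1}{4} = \beta^*_-(\kappa).
\]
A direct computation gives $\tfrac{1}{q}+\tfrac{d-1}{2r}-\tfrac{d+1}{4} - \tfrac{\kappa}{2} - \tfrac{d-1}{4} = \tfrac{1}{q}+\tfrac{d-1}{2r}-\tfrac{d+\kappa}{2}$ and $-\tfrac{1}{2} - \tfrac{\kappa}{2} - \tfrac{d-1}{4} = -\tfrac{d+1+2\kappa}{4}$, which are precisely the two expressions inside the $\max$ defining $\beta^*_-(\kappa)$. Translating $\alpha > \alpha^*$ into the $\beta_-$ variable therefore yields $\beta_- > \beta^*_-(\kappa)$, and similarly for the failure statement.

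I do not anticipate any real obstacle here: Theorem \ref{t:genconeequiv} handles the reduction to the cone multiplier, Theorem \ref{t:coneestimates} provides the full sharp picture (modulo the endpoint in the non-wave-admissible regime), and the remaining step is the bookkeeping identity above. The only mild subtlety worth flagging is that, exactly as in Theorem \ref{t:coneestimates}, the critical endpoint $\beta_- = \beta^*_-(\kappa)$ in the non-wave-admissible case is not addressed by this scheme, and this is why the statement of Theorem \ref{t:thm-sobsgen} is phrased with strict inequalities in both directions there.
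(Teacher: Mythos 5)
Your proposal is correct and is exactly the paper's route: Theorem \ref{t:thm-sobsgen} is obtained by combining the equivalence in Theorem \ref{t:genconeequiv} with the cone multiplier bounds of Theorem \ref{t:coneestimates}, and your algebraic identity $\beta^*_-(\kappa) = \alpha^*(q,r) - \tfrac{\kappa}{2} - \tfrac{d-1}{4}$ is the same bookkeeping step, including the correct handling of the unresolved endpoint $\beta_- = \beta^*_-(\kappa)$ in the non-wave-admissible case.
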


\section{Proof of Theorem \ref{t:coneestimates}} \label{section:conethm}

\subsection{Sufficiency}
Fix $\alpha > \alpha^*$. The localisation to $(\tau,\xi) \in \mathfrak{C}$ with $\xi \in \mathfrak{A}_0$ built into the operator $\mathcal{C}^\alpha$ means that the desired estimate
\[
\|\mathcal{C}^\alpha g\|_{L^q_tL^r_x} \lesssim \|g\|_{L^2}
\]
follows once we prove that
$
\phi(|\xi|)(|\xi| - |\tau|)_+^\alpha
$
gives rise to a bounded multiplier operator $L^2 \to L^q_tL^r_x$. Indeed, since $|\xi| \sim 1$ and $|\tau| \lesssim 1$, elementary considerations show that the convolution kernel corresponding to the remaining factor in the multiplier is integrable on $\mathbb{R}^d \times \mathbb{R}$ and thus plays a benign role.

Since $\alpha$ may be negative, the delicate part of the multiplier is at the boundary of the cone $\tau = \pm |\xi|$, so the next stage is to dyadically decompose away from this region. We may consider the cases $\tau > 0$ and $\tau < 0$ separately, and via elementary changes of variables one can see that the latter case can be obtained from the former. Thus, we take $\psi \in C^\infty_c(\mathbb{R})$ to be supported in $[\frac{1}{2}, 2]$ such that
\begin{equation} \label{e:monodecomp}
s^{\alpha}=\sum_{k \in \mathbb{Z}} 2^{- k\alpha} \psi(2^{k} s)
\end{equation}
holds for all $s > 0$, and use this to decompose the multiplier as
\begin{align*}
\mathbf{1}_{\tau > 0} \phi(|\xi|)(|\xi| - \tau)_+^\alpha = m_0(\xi, \tau) + \sum_{k=k_0}^\infty 2^{- k\alpha} \mathbf{1}_{\tau > 0}\phi(|\xi|)\psi(2^{k} (|\xi|-\tau)).
\end{align*}
Here, of course, $m_0$ contains the terms up to $k_0 - 1$ of which only $O(1)$ remain thanks to the localisation in $\xi$, and thus $m_0$ is a smooth function supported in the set
\begin{equation*}\label{e:supp-m0}
\{ (\xi,\tau) \in \mathbb{R}^d \times \mathbb{R} : |\xi|\in [\tfrac{1}{2}, 2],\, |\xi|-\tau  \ge  2^{-k_0} \}.
\end{equation*}
The precise value of $k_0 \sim 1$ is not important and it will be clear that a sufficiently large choice can be made to make the following argument work. Associated with the above decomposition, we introduce the multiplier operator $\mathcal{C}_k$ given by
\[
\mathcal{F}(\mathcal C_k g)(\xi,\tau) = \mathbf{1}_{\tau > 0}\phi(|\xi|) \psi(2^{k} (|\xi|-\tau)) \widehat{g}(\xi,\tau).
\]
Since we are assuming $\alpha > \alpha^*$, we are reduced to proving
\begin{equation} \label{e:m0g}
\|  \mathcal F^{-1}( m_0   \widehat{g})\|_{L^q_tL_x^r}\lesssim \|g\|_{L^2}
\end{equation}
and
\begin{equation}\label{e:cdeltag}
\| \mathcal C_k  g\|_{L^q_tL_x^r}\lesssim  2^{k \alpha^*}\|g\|_{L^2} \qquad (k \geq k_0).
\end{equation}
Estimate \eqref{e:m0g} is more easily established since $m_0 \widehat{g}$ is compactly supported in a region where $|\xi| - \tau \sim 1$.
\begin{proof}[Proof of \eqref{e:m0g}]
Taking a function $\chi \in C^\infty_c(\mathbb{R}^d \times \mathbb{R})$ such that $\chi(\xi,\tau) = 1$ for all $(\xi,\tau)$ in this support, we may use the fact that $q,r \in [2,\infty)$ and the Young convolution inequality on mixed-norm spaces to see that
\[
\|  \mathcal F^{-1}( m_0   \widehat{g})\|_{L^q_tL_x^r} = \| \mathcal{F}^{-1}\chi *  \mathcal F^{-1}( m_0   \widehat{g})\|_{L^q_tL_x^r} \lesssim \|\mathcal F^{-1}( m_0   \widehat{g})\|_{L^2}.
\]
(Such an estimate is often referred to as Bernstein's inequality.) By Plancherel's theorem and since $\|m_0\|_{L^\infty} \lesssim 1$ we obtain \eqref{e:m0g}.
\end{proof}

\begin{proof}[Proof of \eqref{e:cdeltag}]
For $d \geq 4$, we use (in almost one fell swoop) the classical Strichartz estimates for the wave equation for frequency localised initial data. If we write
\begin{equation} \label{e:halfwave}
U(t)h(x) =  \int_{\mathbb{R}^d} e^{i(x \cdot \xi + t|\xi|)} \widehat{h}(\xi) \, \mathrm{d}\xi
\end{equation}
for the half-wave propagator, then the reader may find a proof of the following estimates in \cite{KeelTao} along with a more comprehensive historical account.
\begin{proposition}[Strichartz estimates for the wave equation] \label{p:Str}
Suppose $q,r \in [2,\infty)$ and $\frac{1}{q} \leq \frac{d-1}{2}(\frac{1}{2} - \frac{1}{r})$. Then
\[
\| U(t)h \|_{L^q_tL^r_x} \lesssim \| h \|_{L^2}
\]
whenever $\widehat{h}$ is supported in $\mathfrak{A}_0$.
\end{proposition}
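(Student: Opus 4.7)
The plan is to follow the classical $TT^*$ strategy combined with the dispersive decay of the half-wave propagator for frequency-localised data, and to close the argument in the time variable via Young's convolution inequality or Hardy--Littlewood--Sobolev. By duality, the desired estimate $\|U(t)h\|_{L^q_tL^r_x}\lesssim \|h\|_{L^2}$ is equivalent to the bilinear estimate
\[
\bigg\|\int_{\mathbb{R}} U(t)U(s)^* G(\cdot,s)\,\mathrm{d}s\bigg\|_{L^q_tL^r_x} \lesssim \|G\|_{L^{q'}_tL^{r'}_x},
\]
where we reinstate the frequency cutoff to $\mathfrak{A}_0$ by a smooth $\widetilde{\chi}\in C^\infty_c(\mathfrak{A}_0)$ squared.

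The key ingredient is the dispersive estimate. Writing the convolution kernel of $U(t-s)$ (post cutoff) as
\[
K(z,\tau)=\int_{\mathbb{R}^d} e^{i(z\cdot \xi + \tau|\xi|)}|\widetilde{\chi}(\xi)|^2\,\mathrm{d}\xi,
\]
a standard stationary phase analysis (with non-degeneracy in the angular directions on each sphere of constant $|\xi|$) yields $\|K(\cdot,\tau)\|_{L^\infty_x}\lesssim (1+|\tau|)^{-(d-1)/2}$. Combining this with the trivial $L^2\to L^2$ bound from Plancherel and interpolating via Riesz--Thorin produces, for every $r\in[2,\infty)$,
\[
\|U(t-s)\widetilde{\chi}(D)h\|_{L^r_x}\lesssim (1+|t-s|)^{-(d-1)(\frac{1}{2}-\frac{1}{r})}\|h\|_{L^{r'}_x}.
\]

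Next, Minkowski's inequality in the spatial variable reduces matters to the one-dimensional convolution bound
\[
\bigg\|\int_{\mathbb{R}}(1+|t-s|)^{-\gamma}F(s)\,\mathrm{d}s\bigg\|_{L^q_t}\lesssim \|F\|_{L^{q'}_t},
\]
where $\gamma=(d-1)(\tfrac{1}{2}-\tfrac{1}{r})$ and $F(s)=\|G(\cdot,s)\|_{L^{r'}_x}$. The wave-admissibility hypothesis is exactly $\gamma\ge \tfrac{2}{q}$. In the strict case $\gamma>\tfrac{2}{q}$ we have $(1+|t|)^{-\gamma}\in L^{q/2}_t$, so Young's convolution inequality with $\tfrac{1}{q/2}+\tfrac{1}{q'}=1+\tfrac{1}{q}$ immediately closes the argument; in the sharp case $\gamma=\tfrac{2}{q}$ with $q>2$ (so $\gamma\in (0,1)$), the Hardy--Littlewood--Sobolev inequality gives the same conclusion.

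The main obstacle is the genuine endpoint $q=2$, which by the restriction $r<\infty$ only arises when $d\ge 4$ and $r=\tfrac{2(d-1)}{d-3}$, giving $\gamma=1$ precisely where HLS fails. Here the $TT^*$-plus-HLS recipe collapses and one must invoke the more delicate bilinear real interpolation / atomic decomposition argument of Keel and Tao; this is why the paper appeals to \cite{KeelTao} for a complete proof rather than proving Proposition \ref{p:Str} afresh. Everything else reduces to the routine scheme sketched above.
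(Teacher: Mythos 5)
Your proposal is essentially correct, and it is worth noting that the paper does not prove Proposition \ref{p:Str} at all: it simply refers the reader to \cite{KeelTao}, so in effect you supply more detail than the source. Your $TT^*$ scheme is the standard one and the exponent bookkeeping checks out: the frequency localisation to $\mathfrak{A}_0$ gives the truncated dispersive bound $(1+|t-s|)^{-(d-1)(\frac12-\frac1r)}$ after interpolating the stationary-phase $L^1\to L^\infty$ decay with Plancherel; wave-admissibility is exactly $\gamma\ge \frac2q$ with $\gamma=(d-1)(\frac12-\frac1r)$; the hypotheses force $r>2$ (since $r=2$ would give $\frac1q\le 0$), so $\gamma>0$ and your case split is exhaustive, with Young handling $\gamma>\frac2q$ and Hardy--Littlewood--Sobolev handling $\gamma=\frac2q$, $q>2$. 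You also correctly isolate the only genuine endpoint inside the stated range, namely $q=2$, $r=\frac{2(d-1)}{d-3}$, $d\ge 4$ (for $d=3$ the would-be endpoint is $r=\infty$, which is excluded), and this case does require the Keel--Tao bilinear real-interpolation argument; deferring it to \cite{KeelTao} is legitimate and mirrors what the paper itself does, only more economically, since the paper outsources every case. The only caveat is that your endpoint discussion is a citation rather than a proof, but since the statement's own justification in the paper is the same citation, nothing is lost; if you wanted full self-containment you would need to reproduce the atomic decomposition argument of Keel and Tao applied to the frequency-localised propagator with decay exponent $\sigma=\frac{d-1}{2}>1$.
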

\begin{remark}
In fact, the single frequency Strichartz estimate in Proposition \ref{p:Str} holds if and only if $d \geq 2$, $q,r \in [2,\infty]$, $\frac{1}{q} \leq \frac{d-1}{2}(\frac{1}{2} - \frac{1}{r})$ and $(q,r,d) \neq (2,\infty,3)$; thus, the cases where $q=\infty$ or $r = \infty$ are valid, except for the special case $(q,r,d) = (2,\infty,3)$, and consequently we may extend Theorem \ref{t:coneestimates} to include such exponents.
\end{remark}

Suppose now $k \geq k_0$. By Plancherel's theorem, we obviously have $\| \mathcal{C}_k g \|_{L^2} \lesssim \|g\|_{L^2}$. Hence, to prove \eqref{e:cdeltag}, by interpolation it is sufficient to show
\begin{equation} \label{e:sqrtdeltaboundg}
\|\mathcal{C}_k g\|_\lqr \lesssim  2^{-k/2} \|g\|_{L^2}
\end{equation}
provided $\frac{1}{q} \le  \frac{d-1}{2}(\frac12-\frac1r)$. By a simple change of variables, $\mathcal{C}_k g$ can be written as
\[
\mathcal{C}_k g(x,t) = \frac{1}{(2\pi)^{d+1}}\int_\mathbb{R}  e^{-i st}\psi(2^k s) \int_{\mathbb{R}^d} e^{i(x\cdot\xi+t|\xi|)} \phi(|\xi|) \mathbf{1}_{|\xi| \geq s} \widehat{g}(\xi, |\xi|-s) \, \mathrm{d}\xi \mathrm{d}s
\]
so that by applying Minkowski's integral inequality and Proposition \ref{p:Str} we obtain
\begin{align*}
\|\mathcal{C}_k g\|_\lqr \lesssim \int_\mathbb{R}  |\psi(2^k s)| \bigg(\int_{\mathbb{R}^d} |\widehat{g}(\xi, |\xi|-s)|^2 \, \mathrm{d}\xi\bigg)^{1/2} \mathrm{d}s.
\end{align*}
Estimate \eqref{e:sqrtdeltaboundg} now readily follows from the Cauchy--Schwarz inequality, further elementary changes of variables and another use of Plancherel's theorem.

The above argument fails to give the full range of claimed estimates for $d=2,3$ and further justification in these cases is given below. When $d=2$ the endpoint Strichartz estimate occurs when $q = 4$ and interpolation with $(q,r) = (2,2)$ is not sufficient to obtain the full range. The problem when $d=3$ is the failure of the endpoint Strichartz estimate at $(q,r) = (2,\infty)$ and thus the above argument fails to generate the claimed estimates in \eqref{e:cdeltag} when $q=2$ and $r \in (2,\infty)$.

To complete the proof of \eqref{e:cdeltag} when $d=2$, it clearly suffices to establish the additional estimate
\begin{equation*}
\| \mathcal{C}_kg\|_{L^2_tL^\infty_x} \lesssim 2^{-k/4}\|g\|_{L^2}
\end{equation*}
or, by duality,
\begin{equation} \label{e:two}
\| \mathcal{C}_kg\|_{L^2} \lesssim 2^{-k/4}\|g\|_{L^2_tL^1_x}.
\end{equation}
If we let $K$ be given by $\widehat{K}(\xi,\tau) = \phi(|\xi|)^2 \psi(2^k(|\xi| - \tau))^2$, then
$
\| \mathcal{C}_kg\|_{L^2}^2 \leq \langle K * g,g \rangle
$
and it suffices to show
$$
\|K\|_{L^1_tL^\infty_x} \lesssim 2^{-k/2}.
$$
Since
\[
|K(x,t)| \sim 2^{-k} |\widehat{\psi^2}(2^{-k}t)| \bigg|\int_{\mathbb{R}^2} \phi(|\xi|)^2 e^{i(x \cdot \xi - t|\xi|)} \, \mathrm{d}\xi\bigg|
\]
it follows that
\[
|K(x,t)| \lesssim 2^{-k} |\widehat{\psi^2}(2^{-k}t)|  (1 + |t|)^{-1/2}
\]
uniformly in $x \in \mathbb{R}^2$. This follows directly from the well-known dispersive estimate which plays a key role in the standard proof of the estimates in Proposition \ref{p:Str} (see, for example, \cite{KeelTao}). Hence
\[
\|K\|_{L^1_tL^\infty_x}  \lesssim \int_\mathbb{R} |\widehat{\psi^2}(s)|  (1 + 2^k|s|)^{-1/2} \,\mathrm{d}s \lesssim 2^{-k/2}
\]
as desired.

Finally, we prove \eqref{e:cdeltag} when $d=3$. As mentioned already, we may follow the above argument for $d \geq 4$ and we obtain all desired estimates except when $q=2$ and $r \in (2,\infty)$; in other words, by duality, it remains to prove
\begin{equation} \label{e:3Dfix} 
\|\mathcal{C}_k g\|_{L^2} \lesssim 2^{k(\frac{1}{r} - \frac{1}{2})}\|g\|_{L^2_tL^{r'}_x}
\end{equation}
for $r \in (2,\infty)$. By Plancherel's theorem, elementary changes of variables and a smooth partition of unity, we may write
$
\|\mathcal{C}_k g\|_{L^2}^2 = 2^{-k} \sum_{\ell \in \mathbb{Z}} \mathcal{I}_\ell, 
$
where
\begin{equation*}
\mathcal{I}_\ell =  \int_{\mathbb{R}^{d}} \int_{\mathbb{R}^{2(d+1)}}\vartheta(\tfrac{t-s}{2^\ell}) \widehat{\psi^2} (\tfrac{t-s}{2^k})  \phi^2(|\xi|) g(x,t)\overline{g(y,s)} e^{i(x-y)\cdot \xi} e^{i(t-s)|\xi|} \, \mathrm{d}x\mathrm{d}t \mathrm{d}y\mathrm{d}s \mathrm{d}\xi 
\end{equation*}
and $\vartheta \in C^\infty_c(\mathbb{R})$ is supported in $[\frac{1}{2},2]$.

On the one hand, using the Cauchy--Schwarz inequality, Plancherel's theorem and the rapid decay of $\widehat{\psi^2}$, we obtain
\begin{equation*} 
|\mathcal{I}_\ell| \leq C_N 2^\ell \min\{1, 2^{(k-\ell)N}\}  \|g\|_{L^2_{t,x}}^2
\end{equation*}
for all $N \geq 1$, where $C_N$ is some finite constant. On the other hand, if instead we make use of the dispersive estimate for the half-wave propagator, then we obtain
\begin{equation*}
|\mathcal{I}_\ell| \lesssim \int_{\mathbb{R}^{2(d+1)}} \vartheta(\tfrac{t-s}{2^\ell}) |\widehat{\psi^2}| (\tfrac{t-s}{2^k})   |g(x,t)||g(y,s)| \frac{1}{|t-s|} \, \mathrm{d}x\mathrm{d}t \mathrm{d}y\mathrm{d}s
\end{equation*}
and hence
\begin{equation*} 
|\mathcal{I}_\ell| \leq C_N \min\{1, 2^{(k-\ell)N}\}      \|g\|_{L^2_tL^1_x}^2
\end{equation*}
for all $N \geq 1$. By interpolation we obtain
\[
|\mathcal{I}_\ell| \leq C_N \min\{1, 2^{(k-\ell)N}\} 2^{\frac{2\ell}{r}} \|g\|_{L^2_tL^{r'}_x}^2
\]
for all $r \in (2,\infty)$, and this estimate easily gives \eqref{e:3Dfix}.
\end{proof}

\begin{remark}  If  $\frac1q>\frac{d-1}2(\frac12-\frac1r)$, $2<q<\infty$,  
we can prove weak type estimates for $\mathcal{C}^\alpha$ at the critical exponent $\alpha=\alpha^*(q,r)$. In fact, for 
$q,r$ as above we have 
\begin{equation}\label{e:weaktype} 
\|\mathcal{C}^{\alpha^*}\!\!\!g\|_{L^{q,\infty}_tL^r_x} \lesssim \|g\|_{L^2} .
\end{equation}
In the pure-norm case ($q=r$), this can be strengthened to the strong type estimate 
$$
\|\mathcal{C}^{\alpha^*}\!\!\!g\|_{L^q_{x,t}} \lesssim \|g\|_{L^2}
$$
and we refer the reader to \cite{Lee} for details of how this upgrade proceeds. (It seems likely that the same also holds for the mixed-norm estimate but we do not pursue this here.) Since $L^{r/2,\infty}_t$ is normable, \eqref{e:weaktype} combined with the Littlewood--Paley inequality gives 
\[ 
\| \diffp\diffn  \rho f\|_{L^{q,\infty}_tL_x^r}\lesssim \|f\|_{L^2}
\] 
with $\beta_-=\beta_-^*$ provided that $\frac1q>\frac{d-1}2(\frac12-\frac1r)$, $2<q<\infty$. 

The proof  of \eqref{e:weaktype} is rather elementary (and follows from a more general principle which may be found, for example, in \cite{LeeSeo}).  Indeed, we may assume $\|g\|_{L^2}=1$ and it suffices to show  
\begin{equation}\label{e:weak} 
\bigg|\bigg\{ t: \bigg\|\sum_{k=k_0}^\infty 2^{-\alpha^*\! k}  \mathcal C_k g\bigg\|_{L^r_x}\ge \lambda\bigg\}\bigg| \lesssim \lambda^{-q}.
 \end{equation}
Choose $q_1, q_2\in (2,\infty)$ such that  $q_1<q<q_2$. So, we have $  -\alpha^*+\alpha^*(q_1,r)>0>-\alpha^*+\alpha^*(q_2,r)$. 
Hence,  Minkowski's  inequality followed by \eqref{e:cdeltag} yields 
\[ 
\bigg\|\sum_{k=k_0}^{N-1} 2^{-\alpha^*\! k}  \mathcal C_k g\bigg\|_{L^{q_1}_tL^r_x}\lesssim   2^{N(\alpha^*(q_1,r)-\alpha^*)}
\]
and
\[
\bigg \|\sum_{k=N}^\infty 2^{-\alpha^*\! k}  \mathcal C_k g\bigg\|_{L^{q_2}_tL^r_x}\lesssim   2^{N(\alpha^*(q_2,r)-\alpha^*)} .
 \] 
So, by this and Chebyshev's inequality, the left-hand side of \eqref{e:weak} is bounded by 
\begin{align*}
 \bigg|\bigg\{ t: \bigg\|\sum_{k=k_0}^{N-1}  2^{-\alpha^*\! k}  \mathcal C_k g\bigg\|_{L^r_x} & \ge \frac\lambda 2\bigg\}\bigg| + \bigg|\bigg\{ t:\, \bigg\|\sum_{k=N}^\infty 2^{-\alpha^*\! k}  \mathcal C_k g\bigg\|_{L^r_x}\ge \frac\lambda 2\bigg\}\bigg| \\
& \lesssim 2^{q_1N(\alpha^*(q_1,r)-\alpha^*)} \lambda^{-q_1}  +2^{q_2N(\alpha^*(q_2,r)-\alpha^*)}\lambda^{-q_2} . 
\end{align*} 
Choosing $N$ which optimises the last expression gives \eqref{e:weak}.
\end{remark}

\subsection{Necessity}
By duality, it suffices to show that
\begin{equation} \label{e:spherenec1}
\alpha \ge \frac{1}{q}+\frac{d-1}{2r}-\frac{d+1}{4}
\end{equation}
and
\begin{equation} \label{e:spherenec2}
\alpha > -\frac{1}{2}
\end{equation}
are necessary conditions for
\begin{equation} \label{e:conedual}
\| \mathcal{C}^\alpha g \|_{L^2} \lesssim \| g \|_{L^{q'}_tL^{r'}_x}.
\end{equation}
We will accomplish these claims using a Knapp-type example and a bump function example as follows.

\subsection*{Proof of \eqref{e:spherenec1}}
Let $0<\delta\ll 1$ and $g_\delta$ be given by
\[
\widehat{g_\delta}(\xi,\tau) = \phi\bigg(\frac{\xi_d-\tau}\delta\bigg)\phi (\xi_d+\tau) \prod_{j=1}^{d-1} \phi\bigg(\frac{\xi_j}{\sqrt\delta}\bigg).
\]
Note that for $(\xi,\tau)$ in the support of $g_\delta$ one has
\[
|\xi_j| \sim \sqrt{\delta} \,\,\,\, (1 \leq j \leq d-1), \quad \tau, |\xi| \sim 1, \quad  |\xi| - \tau \sim \delta, \quad |\xi' - e_d| \sim \sqrt{\delta}
\]
and thus $\widehat{g_\delta}$ is a smooth function adapted to a $\delta$-plate. Writing $\theta_\delta$ for the support of $\widehat{g_\delta}$, by Plancherel's theorem we clearly have
$
\| \mathcal{C}^\alpha g_\delta \|_{L^2} \sim \delta^{\alpha} |\theta_\delta|^{1/2}.
$
Also, one can show that the main contribution to the right-hand side of \eqref{e:conedual} arises from the dual box consisting of those $(x,t)$ such that
\[
|x_1|, \dots, |x_{d-1}| \lesssim   \frac{1}{\sqrt{\delta}},\quad |x_d+t| \lesssim 1, \quad |x_d-t| \lesssim \frac{1}{\delta}
\]
and therefore
$
\| g \|_{L^{q'}_tL^{r'}_x} \sim |\theta_\delta| \delta^{-\frac{d-1}{2r'} - \frac{1}{q'}}.
$
Since $|\theta_\delta| \sim \delta^{\frac{d+1}{2}}$, it follows that if \eqref{e:conedual} holds then $\alpha \geq \frac{d+1}{4} - \frac{d-1}{2r'} - \frac{1}{q'}$, which is equivalent to \eqref{e:spherenec1}.

\subsection*{Proof of \eqref{e:spherenec2}}
Choose $g(x,t) = g_1(x)g_2(t)$, where $g_1 \in C^\infty_c(\mathbb{R}^d)$ is such that $\widehat{g}_1(\xi) = 1$ for $|\xi| \in [\frac{1}{2},2]$ and $g_2 \in C^\infty_c(\mathbb{R})$ is such that $\widehat{g}_2(\tau) = 1$ for all $|\tau| \leq 2$. By Plancherel's theorem and a trivial change of variables in $\tau$,
\begin{align*}
\| \mathcal{C}^\alpha g \|_{L^2}^2 \sim \int_{|\tau| \leq |\xi|} \phi(|\xi|)^2 (|\xi| - |\tau|)^{2\alpha} |\widehat{g}_1(\xi)|^2 |\widehat{g}_2(\tau)|^2 \, \mathrm{d}\tau \mathrm{d}\xi \gtrsim \int_0^1 (1 - \lambda)^{2\alpha} \, \mathrm{d}\lambda
\end{align*}
and hence $\alpha > -\frac{1}{2}$.

\section{Approach II : Direct analysis} \label{section:direct}
In this section, we shall focus on the case $V = \mathbb{S}^{d-1}$; later we make some remarks on the robustness of the approach taken here and applicability to other velocity domains.

Consider initial data $f$ belonging to the (homogeneous) Besov space $\dot{B}_{p,2}^s$. To define this space, we use the Littlewood--Paley projection operators $(P_j)_{j \in \mathbb{Z}}$ introduced in Section \ref{section:pre}, and the norm
\[
\|f\|_{\dot{B}_{p,2}^s} = \bigg( \sum_{j \in \mathbb{Z}} 2^{2js} \|P_jf\|_{L^p}^2 \bigg)^{1/2}.
\]
For instance, $\dot{B}_{2,2}^s$ is the (homogeneous) fractional Sobolev space $\dot{H}^s$ (regularity measured in the spatial variable) and specialising further still $\dot{B}_{2,2}^0$ is $L^2$.

We begin by considering $f$ such that $\widehat{f}(\cdot,v)$ is supported in $\mathfrak{A}_0$ for each $v \in \mathbb{S}^{d-1}$. Throughout this section, we regard $p,q,r$ and $s$ as given parameters, and we set
\begin{equation} \label{e:sscaling}
\beta_+ + \beta_- =  s + \frac{d}{r} +\frac{1}{q} - \frac{d}{p}.
\end{equation}
The core argument in this section is based to some extent on the above proof of Theorem \ref{t:coneestimates}; the key role played by the Strichartz estimates for the wave equation above will be replaced by different estimates (such as the sharp $\ell^p$ decoupling inequality) depending on the context.
%Also, we fix $\beta_-$ such that $\beta_- > \beta^*_-$.

We shall often use that $\widehat{\rho f}$ is supported in the region $\mathfrak{C}$. Thus, as a result of the spatial localisation of the initial data we shall see that the key estimates are
\begin{equation} \label{e:m0}
\|  \mathcal F^{-1}( m_0   \widehat{\rho f})\|_{L^q_tL_x^r}\lesssim \|f\|_{L^p}
\end{equation}
and
\begin{equation}\label{e:cdelta}
\| \mathcal C_k  \rho f\|_{L^q_tL_x^r}\lesssim  2^{k \eta}\|f\|_{L^p} \qquad (k \geq k_0).
\end{equation}
Here $\eta \sim 1$ is a crucial parameter which determines the range of admissible $\beta_-$, the multiplier $m_0$ is given by
\[
m_0(\xi,\tau) = \sum_{k \leq k_0 - 1} 2^{-k\beta_-} \phi(|\xi|)\psi(2^k(|\xi| - \tau))
\]
and $k_0 \sim 1$ is chosen sufficiently large. We emphasise that there are $O(1)$ terms in the sum defining $m_0$ thanks to the localisation to $\mathfrak{A}_0$.

As one may expect, the estimate \eqref{e:m0} away from the singularity in the multiplier is more easily established.
\begin{lemma} \label{l:easybit}
If $p \in [2,\infty]$ and $q,r \in [p,\infty]$, then \eqref{e:m0} holds.
\end{lemma}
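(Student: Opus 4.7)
The plan is to exploit the fact that $m_0\widehat{\rho f}$ has compactly supported Fourier support, so \eqref{e:m0} reduces, via a Bernstein/Young-style argument with a Schwartz kernel, to the essentially trivial bound $\|\rho f\|_{L^\infty_t L^p_x} \lesssim \|f\|_{L^p}$. No oscillatory or dispersive input is needed, which is precisely the point of isolating $m_0$ away from the singular region of the cone multiplier.

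Concretely, I would first verify that the joint support of $m_0$ and $\widehat{\rho f}$ is contained in a fixed compact subset of $\mathbb{R}^{d+1}$. From \eqref{e:rhofhat} together with the hypothesis $\supp \widehat{f}(\cdot,v) \subset \mathfrak{A}_0$, we have $\widehat{\rho f}$ supported in $\{(\xi,\tau): |\xi|\in[\tfrac12,2]\}$, and the cone constraint from $\mathfrak{C}$ then forces $|\tau|\le 2$. The same localisation in $|\xi|$ also means that only $O(1)$ of the terms in the sum defining $m_0$ actually contribute, so $m_0$ is a smooth, compactly supported function and $K := \mathcal{F}^{-1}(m_0)$ is Schwartz.

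Next, I would record the uniform-in-$t$ bound $\|\rho f\|_{L^\infty_t L^p_x} \lesssim \|f\|_{L^p}$, obtained by Minkowski's integral inequality and translation invariance in $x$ (the change of variables $x\mapsto x+tv$ kills the $t$-dependence), followed by H\"older's inequality using the finiteness of $\sigma$. Finally, writing $\mathcal{F}^{-1}(m_0 \widehat{\rho f}) = K * \rho f$ (convolution in $(x,t)$) and applying Young's convolution inequality on mixed-norm spaces gives
\[
\|K * \rho f\|_{L^q_t L^r_x} \le \|K\|_{L^q_t L^{r_1}_x} \|\rho f\|_{L^\infty_t L^p_x},
\]
with $1/r_1 = 1 + 1/r - 1/p \in [0,1]$ thanks to the assumption $p \le r$; the Schwartz decay of $K$ makes the first factor finite, and \eqref{e:m0} follows. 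There is no real obstacle here, and indeed the main thing to be careful about is avoiding the temptation to put $\rho f$ itself into $L^p_{x,t}$ (which would diverge in $t$), working instead through $L^\infty_t L^p_x$ and absorbing the $t$-decay into the Schwartz kernel $K$.
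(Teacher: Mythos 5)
Your reduction to the single-time bound $\|\rho f\|_{L^\infty_t L^p_x}\lesssim\|f\|_{L^p}$ cannot prove \eqref{e:m0} for $q<\infty$, and the gap is in your final display: the inequality $\|K*\rho f\|_{L^q_tL^r_x}\le \|K\|_{L^q_tL^{r_1}_x}\|\rho f\|_{L^\infty_tL^p_x}$ is not a valid instance of Young's inequality. In the time variable Young requires $1+\tfrac1q=\tfrac1{q_1}+\tfrac1{q_2}$, and with $q_2=\infty$ this forces $\tfrac1{q_1}=1+\tfrac1q>1$ whenever $q<\infty$, which is impossible; the asserted bound is in fact false in general (already in one variable, $g\equiv1\in L^\infty_t$ and a Schwartz $K$ with $\int K\neq0$ give $K*g$ constant in $t$, hence not in $L^q_t$). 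Conceptually, convolution with a Schwartz kernel, equivalently compact Fourier support, lets you \emph{raise} Lebesgue exponents (Bernstein) but never lower them: the integrability in $t$ for finite $q$ must be generated by the velocity average itself and cannot be ``absorbed into $K$''. Indeed, the content of \eqref{e:m0} at $p=q=r=2$ is essentially the classical $L^2$ averaging lemma away from the boundary of the cone, and it does not follow from the $L^\infty_t L^2_x$ information alone: a single-velocity wave $h(x-tv_0)$ satisfies the same uniform-in-$t$ bound, yet its space-time Fourier transform is a measure on a hyperplane, so multiplying by a smooth compactly supported $m_0$ does not place it in $L^2_{t,x}$. Your preliminary observations (support of $\widehat{\rho f}$ in $\mathfrak{C}\cap\{|\xi|\in[\tfrac12,2]\}$, the effective $O(1)$ truncation of $m_0$, and the uniform bound via Minkowski and H\"older) are all correct and appear in the paper, but they are not the decisive ingredient.

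The paper supplies precisely the missing input. For $p=2$ it uses the representation \eqref{e:nodeltarep} together with the Cauchy--Schwarz inequality on the slices $\Sigma_{\xi,\tau}$, whose measure is $O(1)$ because $|\xi|-\tau\sim1$ on the relevant support, to obtain $\|m_0\widehat{\rho f}\|_{L^2}\lesssim\|f\|_{L^2}$; by Plancherel this is \eqref{e:m0} with $p=q=r=2$, and it is here that the averaging over velocities (rather than a fixed-time estimate) is genuinely used. The case $p=q=r=\infty$ is the trivial one, where an argument like yours does work since Young then only needs $\mathcal{F}^{-1}m_0\in L^1$. Interpolating gives $p=q=r\in[2,\infty]$, and only afterwards is the compact Fourier support invoked, in the legitimate direction, to pass from $L^p_{t,x}$ to $L^q_tL^r_x$ for $q,r\ge p$. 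To repair your argument you would have to replace the invalid Young step by this $L^2$ (Plancherel/slicing) estimate or by some other genuine source of time decay, such as a dispersive bound for the spherical average.
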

\begin{proof}
We use interpolation between the cases $p=2$ and $p=\infty$. For $p=2$, since $|\xi|-\tau \sim 1$ on the support of $m_0$, by \eqref{e:nodeltarep} and the Cauchy--Schwarz inequality, we have
 \begin{align*}
|m_0(\xi,\tau)\widehat{\rho f}(\xi,\tau)|^2
& \lesssim
\bigg| \int_{\Sigma_{\xi,\tau} }    \widehat{f}(\xi,v) \, \mathrm{d}\sigma_{\xi,\tau}(v)  \bigg|^2
 \\
& \lesssim
\int_{\Sigma_{\xi,\tau} }    |\widehat{f}(\xi,v)|^2 \, \mathrm{d}\sigma_{\xi,\tau}(v)
 \\
& \sim \int_{\mathbb S^{d-1}}  \delta(\tau+v\cdot\xi)    |\widehat{f}(\xi,v)|^2 \, \mathrm{d}\sigma(v).
\end{align*}
Hence integration in $\tau$ and then $\xi$ gives
$
\|m_0   \widehat{\rho f}\|_{L^2} \lesssim \|f\|_{L^2}
$
and therefore \eqref{e:m0} follows for $p = q = r =2$.

For $p=\infty$, since $\mathcal{F}^{-1}m_0 \in L^1$ and, trivially, $\|\rho f\|_{L^\infty} \lesssim \|f\|_{L^\infty}$ we obtain \eqref{e:m0} when $p = q = r = \infty$. Interpolating between these two estimates we obtain that \eqref{e:m0} is true whenever $p = q = r \in [2,\infty]$, and since $m_0$ is a bounded function of compact support we finally obtain
\begin{equation*}
\| \mathcal F^{-1}( m_0   \widehat{\rho f})\|_{L^q_tL_x^r} \lesssim \| \mathcal F^{-1}( m_0   \widehat{\rho f})\|_{L^p} \lesssim \|f\|_{L^p}
\end{equation*}
as desired.
\end{proof}
The following conditional result clarifies the decisive nature of the estimates \eqref{e:m0} and \eqref{e:cdelta}.
\begin{proposition} \label{p:abstract}
Suppose \eqref{e:m0} and \eqref{e:cdelta} hold. Then whenever $q,r \in [2,\infty)$ and $\beta_- > \eta$ the estimate
\[
\|D_+^{\beta_+}D_-^{\beta_-} \rho f\|_{L^q_tL^r_x} \lesssim \|f\|_{\dot{B}^s_{p,2}}
\]
holds for all $f \in \dot{B}^s_{p,2}$.
\end{proposition}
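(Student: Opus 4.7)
The plan is to Littlewood--Paley decompose the data in the spatial variable, rescale each dyadic piece so that its Fourier support lies in the unit annulus $\mathfrak{A}_0$ (bringing the hypotheses \eqref{e:m0} and \eqref{e:cdelta} into play for each frequency scale), and reassemble the pieces via a square-function argument. Writing $f = \sum_{j \in \mathbb{Z}} P_j f$, introduce the dilates $f_j(x,v) := (P_j f)(2^{-j}x, v)$, whose spatial Fourier transforms are supported in $\mathfrak{A}_0$. Since $\rho$ is translation-invariant and the symbol of $D_+^{\beta_+} D_-^{\beta_-}$ is homogeneous of degree $\beta_+ + \beta_-$,
\[
(D_+^{\beta_+} D_-^{\beta_-} \rho P_j f)(x,t) = 2^{j(\beta_+ + \beta_-)} (D_+^{\beta_+} D_-^{\beta_-} \rho f_j)(2^j x, 2^j t).
\]
Combined with $\|f_j\|_{L^p} = 2^{jd/p} \|P_j f\|_{L^p}$ and the scaling identity \eqref{e:sscaling}, this shows that the targeted dyadic estimate $\|D_+^{\beta_+} D_-^{\beta_-} \rho P_j f\|_{L^q_t L^r_x} \lesssim 2^{js}\|P_j f\|_{L^p}$ is equivalent to the unit-frequency bound $\|D_+^{\beta_+} D_-^{\beta_-} \rho f_j\|_{L^q_t L^r_x} \lesssim \|f_j\|_{L^p}$.

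To establish the latter, observe that $\widehat{\rho f_j}$ is supported in $\mathfrak{A}_0 \times \mathbb{R}$ intersected with the cone $\mathfrak{C}$, where $|\xi|+|\tau|\sim 1$. Thus the symbol $(|\xi|+|\tau|)^{\beta_+}$ may be pre-multiplied by a smooth, compactly supported cutoff equal to $1$ on this set, yielding a Schwartz symbol whose convolution kernel is integrable; Young's inequality on mixed-norm spaces then allows us to absorb the $D_+^{\beta_+}$ factor at the cost of a harmless constant, reducing matters to bounding $\|D_-^{\beta_-} \rho f_j\|_{L^q_t L^r_x}$. Handling the $\tau>0$ and $\tau<0$ halves separately by symmetry, apply \eqref{e:monodecomp} to split
\[
D_-^{\beta_-} \rho f_j = \mathcal{F}^{-1}(m_0 \widehat{\rho f_j}) + \sum_{k \geq k_0} 2^{-k\beta_-}\, \mathcal{C}_k \rho f_j,
\]
and invoke \eqref{e:m0} on the first summand together with \eqref{e:cdelta} on each dyadic piece. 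This bounds the left-hand side by $\bigl(1 + \sum_{k \geq k_0} 2^{-k(\beta_- - \eta)}\bigr)\|f_j\|_{L^p}$, in which the geometric series converges precisely because $\beta_- > \eta$.

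For the final reassembly, apply the classical Littlewood--Paley inequality pointwise in $t$ (valid for $r \in (1,\infty)$), then push the $\ell^2_j$ norm outside the mixed norm $L^q_t L^r_x$ via Minkowski's inequality (valid since $q,r \geq 2$), and use that $P_j$ commutes with the spatial Fourier multiplier $D_+^{\beta_+} D_-^{\beta_-} \rho$:
\[
\|D_+^{\beta_+} D_-^{\beta_-} \rho f\|_{L^q_t L^r_x} \lesssim \bigg(\sum_j \|D_+^{\beta_+} D_-^{\beta_-} \rho P_j f\|_{L^q_t L^r_x}^2\bigg)^{1/2} \lesssim \bigg(\sum_j 2^{2js}\|P_j f\|_{L^p}^2\bigg)^{1/2} = \|f\|_{\dot{B}^s_{p,2}}.
\]
The proof is largely mechanical; the one point demanding care is the scaling bookkeeping in the first paragraph, where the relation \eqref{e:sscaling} is rigged exactly so that the $j$-dependent constants collapse to the clean factor $2^{js}$ dictated by the Besov norm. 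Beyond this, the only genuine input is the convergence of the geometric series in $k$, which is exactly the role played by the hypothesis $\beta_- > \eta$ and which explains why $\eta$ is the decisive parameter controlling the admissible range of $\beta_-$.
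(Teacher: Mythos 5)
Your proposal is correct and follows essentially the same route as the paper: decompose via \eqref{e:monodecomp} at unit frequency, apply \eqref{e:m0} and \eqref{e:cdelta} with the geometric series converging because $\beta_- > \eta$, absorb $D_+^{\beta_+}$ using the compact frequency support of $\rho$ applied to frequency-localised data, rescale to obtain the dyadic bound $\lesssim 2^{js}\|P_jf\|_{L^p}$, and reassemble with the Littlewood--Paley inequality and Minkowski for $q,r \in [2,\infty)$. The only difference is cosmetic: you spell out the $D_+^{\beta_+}$ absorption and the square-function step slightly more explicitly than the paper does.
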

\begin{proof}
As in the proof of Theorem \ref{t:coneestimates}, using the identity \eqref{e:monodecomp}, the triangle inequality, and the estimates \eqref{e:m0} and \eqref{e:cdelta}, we immediately obtain
\begin{equation*}
\|D_-^{\beta_-} \rho (P_0f)\|_{L^q_tL^r_x} \lesssim \|P_0f\|_{L^p}
\end{equation*}
since $\beta_- > \eta$. It follows from the frequency support of $\rho(P_0f)$ that
\begin{equation*}
\|D_+^{\beta_+}D_-^{\beta_-} \rho (P_0f)\|_{L^q_tL^r_x} \lesssim \|P_0f\|_{L^p}
\end{equation*}
and then a rescaling argument shows that
\begin{equation} \label{e:Besovkpart}
\|D_+^{\beta_+}D_-^{\beta_-} \rho (P_jf)\|_{L^q_tL^r_x} \lesssim 2^{js} \|P_jf\|_{L^p}.
\end{equation}
The basis of this rescaling argument are the identities
\[
D_+^{\beta_+}D_-^{\beta_-} \rho (P_jf)(x,t) = 2^{j(\beta_+ + \beta_- + d)} D_+^{\beta_+}D_-^{\beta_-} \rho (P_0f_j)(2^jx,2^jt)
\]
and
$
P_0f_j(x,v) = 2^{-jd}P_jf(2^{-j}x,v),
$
where $\widehat{f_j}(\xi,v) = \widehat{f}(2^j\xi,v)$; these are easily verified by simple changes of variables.

Since $q,r \in [2,\infty)$, it follows from \eqref{e:LP} that
\begin{equation} \label{e:decoupling}
\| \diffp\diffn  \rho f \|_{L^q_tL_x^r} \leq \bigg( \sum_{j \in \mathbb{Z}} \| \diffp\diffn  \rho (P_jf) \|_{L^q_tL_x^r}^2 \bigg)^{1/2}
\end{equation}
and hence the desired estimate $\| \diffp\diffn  \rho f \|_{L^q_tL_x^r} \lesssim \|f\|_{\dot{B}^s_{p,2}}$ follows directly from \eqref{e:Besovkpart}.
\end{proof}

The above argument focuses attention onto the estimate \eqref{e:cdelta}. In this section and the subsequent section we shall exhibit a variety of smoothing estimates based on Proposition \ref{p:abstract}, in each case our work has been reduced to verifying \eqref{e:cdelta}. We begin with a proof of (the sufficiency claims in) Theorem \ref{t:thm-sobs}.

\subsection{Direct proof of Theorem \ref{t:thm-sobs}} \label{subsection:directproof}
Suppose $d \geq 2$, $q,r\in [2,\infty)$ and assume $\beta_+$, $\beta_-$ satisfy \eqref{e:scaling} with $\beta_- > \beta_-^*$. Since $p=2$ and $s=0$, thanks to Lemma \ref{l:easybit} and Proposition \ref{p:abstract}, it suffices to prove \eqref{e:cdelta} with $\eta = \beta_-^*$. By \eqref{e:cdeltag}, it suffices to prove
\begin{equation} \label{e:cdelta22}
\| \mathcal C_k  \rho f\|_{L^2_tL_x^2}\lesssim  2^{\frac{3-d}{4} k}\|f\|_{L^2} \qquad (k \geq k_0).
\end{equation}
To see this, note that the representation in \eqref{e:nodeltarep} allows us to write
\begin{equation*}
|\widehat{\rho f}(\xi,\tau)| \sim 2^k \bigg|\int_{\Sigma_{\xi,\tau}} \widehat{f}(\xi,v) \, \mathrm{d}\sigma_{\xi,\tau}(v)\bigg|
\end{equation*}
whenever $|\xi| - \tau \sim 2^{-k}$. Since we also assume $|\xi| \sim 1$, we have that $\Sigma_{\xi,\tau}$ is a $(d-2)$-dimensional sphere with radius $(1 - \frac{\tau^2}{|\xi|^2})^{1/2} \sim 2^{-k/2}$, and hence the Cauchy--Schwarz inequality yields
\begin{align*}
|\widehat{\rho f}(\xi,\tau)|^2 \lesssim 2^{-k \frac{d-3}{2}} \int_{\mathbb{S}^{d-1}} \delta(v \cdot \xi + \tau) |\widehat{f}(\xi,v)|^2 \, \mathrm{d}\sigma(v).
\end{align*}
Hence, integrating in $\tau$, then $\xi$, we get \eqref{e:cdelta22}. This completes our direct proof of Theorem \ref{t:thm-sobs}.

\begin{remark}
For simplicity of exposition, we have presented the direct approach with the velocity domain as $\mathbb{S}^{d-1}$ equipped with Lebesgue measure. However, it is clear that the approach is sufficiently robust to handle other situations. For example, we may follow the above proof to give an alternative proof of the more general statement in Theorem \ref{t:thm-sobsgen} concerned with the family of measures $\mathrm{d}\mu_\kappa(v) = w_\kappa(v) \, \mathrm{d}v$, where $w_\kappa$ is given by \eqref{e:wkappa}.
\end{remark}

\subsection{Besov space estimates via the $\ell^p$ decoupling inequality}
Here we show how the recently established sharp decoupling theorems of Bourgain and Demeter induce $\dot{B}^s_{p,2} \to L^q$ smoothing estimates for $\rho$. Since the mixed-norm theory of decoupling estimates has currently not been fully developed, we shall consider only the pure-norm where $q=r$ on the velocity average; it will be obvious how to extend our results to the mixed-norm case on the basis of a mixed-norm extension of Theorem \ref{t:decoupling} below (in particular, see Lemma \ref{l:decouplingimplies}).

In order to state the $\ell^p$ decoupling inequality, it is necessary to introduce some notation, starting with
\[
\Gamma = \{ (\xi,|\xi|) \in \mathbb{R}^d \times \mathbb{R} : |\xi| \in [1,2]\}
\]
for the truncated cone and
\[
\mathcal{N}_k(\Gamma) = \{ (\xi,\tau) \in \mathbb{R}^d \times \mathbb{R} : \tau \in [1,2], |\tau - |\xi|| \leq 2^{-k} \}
\]
for the $2^{-k}$ neighbourhood of $\Gamma$, with $k \gg 1$. Then, subordinate to a given $2^{-k/2}$-separated family of points on the sphere $\mathbb{S}^d$, we let $\mathcal{P}_k(\Gamma)$ be the partition of $\mathcal{N}_k(\Gamma)$ into plates $\theta$ with height $O(1)$, thickness $O(2^{-k})$ in the normal direction, and $O(2^{-k/2})$ in the remaining $d-1$ directions.

To define an important exponent $\gamma(p,q)$ in the following decoupling theorem, we introduce the notation $\mathcal{T} = \mathcal{T}^0 \cup \mathcal{T}_0$, where $\mathcal{T} = \{(\frac{1}{p},\frac{1}{q}) \in [0,\frac{1}{2}]^2: \frac{1}{p} \geq \frac{1}{q}\}$, $\mathcal{T}^0 = \{ (\frac{1}{p},\frac{1}{q}) \in \mathcal{T} : \frac{1}{q} \geq \frac{d-1}{2(d+1)} \}$ and $\mathcal{T}_0 = \mathcal{T} \setminus \mathcal{T}^0$. Then $\gamma(p,q)$ is set by
\[
\gamma(p,q) = \left\{ \begin{array}{llllll} \frac{d+1}{2q} + \frac{d-1}{4}- \frac{d}{p} & \qquad \text{if $(\frac{1}{p},\frac{1}{q}) \in \mathcal{T}^0$} \vspace{2mm} \\ \frac{d-1}{2} - \frac{d}{p} & \qquad \text{if $(\frac{1}{p},\frac{1}{q}) \in \mathcal{T}_0$.} \end{array} \right.
\]
Also, for each $\theta \in \mathcal{P}_k(\Gamma)$ and $k \gg 1$, we define the projection $\Pi_{k,\theta}$ by
\[
\mathcal{F}(\Pi_{k,\theta}g)(\xi,\tau) = \chi_\theta(\xi') \psi(2^k(|\xi| - \tau)) \widehat{g}(\xi,\tau),
\]
where $\chi_\theta$ is a smooth cut-off function supported on the corresponding subset of $\mathbb{S}^{d}$.
\begin{theorem} \label{t:decoupling}
Suppose that $(\frac{1}{p},\frac{1}{q}) \in \mathcal{T}$. Then for each $\varepsilon > 0$ there exists a constant $C_\varepsilon < \infty$ such that
\begin{equation} \label{e:decoupling}
\|g\|_{L^q} \leq C_\varepsilon 2^{(\gamma(p,q) + \varepsilon)k} \bigg( \sum_{\theta \in \mathcal{P}_k(\Gamma)} \|\Pi_{k,\theta}g\|_{L^p}^p \bigg)^{1/p}
\end{equation}
whenever $\widehat{g}$ is supported in $\mathcal{N}_k(\Gamma)$.
\end{theorem}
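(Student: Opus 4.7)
I will derive Theorem~\ref{t:decoupling} from the $\ell^2$ cone decoupling theorem of Bourgain--Demeter \cite{BD} by means of H\"older's inequality and Bernstein's inequality, treating the two regimes $(\frac{1}{p},\frac{1}{q}) \in \mathcal{T}^0$ and $\mathcal{T}_0$ separately. Write $q_c = \frac{2(d+1)}{d-1}$, the critical exponent separating the two regimes (corresponding to the line $\frac{1}{q} = \frac{d-1}{2(d+1)}$). The input I shall take for granted is that for all $2 \le q \le q_c$ and all $g$ with $\widehat{g}$ supported in $\mathcal{N}_k(\Gamma)$, the $\ell^2$ decoupling
\[
\|g\|_{L^q} \lesssim_\varepsilon 2^{\varepsilon k} \bigg( \sum_{\theta \in \mathcal{P}_k(\Gamma)} \|\Pi_{k,\theta}g\|_{L^q}^2 \bigg)^{1/2}
\]
holds; the endpoint $q = q_c$ is the substantive content of \cite{BD}, and the range $q \in [2,q_c]$ follows by interpolation with the Plancherel identity at $q = 2$.

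For the subcritical range $(\frac{1}{p},\frac{1}{q}) \in \mathcal{T}^0$ I will apply the above inequality at the exponent $q$ and then perform two routine conversions. H\"older's inequality across the $|\mathcal{P}_k(\Gamma)| \sim 2^{k(d-1)/2}$ plates turns the $\ell^2$ sum into an $\ell^p$ sum at the cost of a factor $2^{k(d-1)(\frac{1}{2} - \frac{1}{p})/2}$, while Bernstein's inequality on each plate (which has volume $|\theta| \sim 2^{-k(d+1)/2}$) gives $\|\Pi_{k,\theta}g\|_{L^q} \lesssim |\theta|^{\frac{1}{p} - \frac{1}{q}} \|\Pi_{k,\theta}g\|_{L^p}$, costing a further $2^{-k(d+1)(\frac{1}{p}-\frac{1}{q})/2}$. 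Adding these two exponents of $2^k$ produces precisely $\gamma(p,q) = \frac{d-1}{4} - \frac{d}{p} + \frac{d+1}{2q}$, as required on $\mathcal{T}^0$.

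For the supercritical range $(\frac{1}{p},\frac{1}{q}) \in \mathcal{T}_0$ I will apply the subcritical case at $q = q_c$ to obtain
\[
\|g\|_{L^{q_c}} \lesssim_\varepsilon 2^{(\frac{d-1}{2} - \frac{d}{p} + \varepsilon)k} \bigg( \sum_{\theta \in \mathcal{P}_k(\Gamma)} \|\Pi_{k,\theta}g\|_{L^p}^p \bigg)^{1/p}.
\]
Since $\mathcal{N}_k(\Gamma) \subset B(0,3)$ uniformly in $k$, convolving $g$ against a Schwartz function whose Fourier transform equals $1$ on $B(0,3)$ and invoking Young's inequality yields $\|g\|_{L^q} \lesssim \|g\|_{L^{q_c}}$ with constant independent of $k$. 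Chaining the two steps keeps the exponent at $\frac{d-1}{2} - \frac{d}{p}$, which matches the formula for $\gamma(p,q)$ on $\mathcal{T}_0$ and joins continuously to the subcritical formula across $q = q_c$.

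The main obstacle, strictly speaking, lies entirely upstream: all analytical depth is absorbed into the Bourgain--Demeter $\ell^2$ decoupling theorem, whose own proof relies on multilinear Kakeya estimates and an induction on scales. Once that result is available as a black box (the natural viewpoint here, since Theorem~\ref{t:decoupling} is presented as a consequence of \cite{BD}), the remaining derivation of the $\ell^p$ version is a bookkeeping exercise, with the only substantive check being that the two regimes produce matching, sharp values of $\gamma(p,q)$ across the boundary $\frac{1}{q} = \frac{d-1}{2(d+1)}$.
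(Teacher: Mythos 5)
Your reduction works on $\mathcal{T}^0$: applying the (no-loss) $\ell^2$ decoupling at the exponent $q\le q_c:=\tfrac{2(d+1)}{d-1}$, then H\"older over the $\sim 2^{k(d-1)/2}$ plates and Bernstein on each plate of volume $\sim 2^{-k(d+1)/2}$, does produce exactly $\gamma(p,q)=\tfrac{d+1}{2q}+\tfrac{d-1}{4}-\tfrac{d}{p}$, and the plate Bernstein step is legitimate there because $p\le q$ on $\mathcal{T}$. This is a genuinely different organisation from the paper, which instead takes the diagonal $p=q$ case from \cite{BD} (via H\"older), obtains the vertical edge $p=2$ from the Strichartz-based estimate \eqref{e:cdeltag} together with the identity $\gamma(2,q)=\alpha^*(q,q)$, and fills the triangle $\mathcal{T}$ by interpolation between the hypotenuse and that edge.

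However, your treatment of $\mathcal{T}_0$ has a genuine gap. You invoke ``the subcritical case at $q=q_c$'' for a general point of $\mathcal{T}_0$, but that case is only available when $p\le q_c$: its plate Bernstein step is $\|\Pi_{k,\theta}g\|_{L^{q_c}}\lesssim|\theta|^{\frac1p-\frac1{q_c}}\|\Pi_{k,\theta}g\|_{L^p}$, and for $p>q_c$ no inequality of the form $\|h\|_{L^{q_c}}\lesssim C\|h\|_{L^p}$ can hold uniformly for functions with Fourier support in a single plate (take a sum of $N$ widely separated translates of one wave packet: the ratio of norms grows like $N^{\frac1{q_c}-\frac1p}$). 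Since $\mathcal{T}_0$ contains points with $\tfrac1p<\tfrac{d-1}{2(d+1)}$ (for instance $p=q$ large), the region $q\ge p>q_c$ is not covered by your argument, and your declared black-box input (loss-free $\ell^2$ decoupling only for $2\le q\le q_c$) is not enough to reach it. The repair is standard: either use the full Bourgain--Demeter statement in the supercritical range, namely $\|g\|_{L^q}\lesssim_\varepsilon 2^{(\frac{d-1}{4}-\frac{d+1}{2q}+\varepsilon)k}\bigl(\sum_\theta\|\Pi_{k,\theta}g\|_{L^q}^2\bigr)^{1/2}$ for $q\ge q_c$, and then run your same H\"older plus plate-Bernstein conversion at the exponent $q$ itself (this yields $\tfrac{d-1}{4}-\tfrac{d+1}{2q}+\tfrac{d-1}{2}(\tfrac12-\tfrac1p)+\tfrac{d+1}{2}(\tfrac1q-\tfrac1p)=\tfrac{d-1}{2}-\tfrac{d}{p}$, valid since $p\le q$ on all of $\mathcal{T}_0$); or follow the paper and interpolate between the diagonal and the $p=2$ edge. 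A minor further remark: your passage from the critical exponent to $2\le q\le q_c$, and any interpolation of decoupling constants, requires the usual (standard but not completely trivial) argument for interpolating such inequalities, which is worth at least a citation.
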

In the diagonal case $p=q$, Theorem \ref{t:decoupling} is due to Bourgain and Demeter \cite{BD} (these estimates are also known in the literature as \textit{Wolff's inequalities}, and earlier contributions were made in \cite{GarrigosSeeger}, \cite{LabaWolff}, \cite{Wolff}). For example, this may be obtained from Theorem 1.2 in \cite{BD} where a stronger statement is proved with an $\ell^2$ norm on the right-hand side; the $\ell^p$ decoupling estimate \eqref{e:decoupling} follows immediately by H\"older's inequality. Note also that when $p=2$, as a consequence of \eqref{e:cdeltag}, we may obtain
\begin{equation*}
\|g\|_{L^q} \lesssim 2^{\alpha^*(q,q)k} \|g\|_{L^2}
\end{equation*}
and it is easily checked that $\gamma(2,q) = \alpha^*(q,q)$. Hence, the full range of estimates \eqref{e:decoupling} for $(\frac{1}{p},\frac{1}{q})$ in the triangle $\mathcal{T}$ follows by interpolating between the hypotenuse $\frac{1}{p}=\frac{1}{q}$ and the vertical edge $\frac{1}{p}=\frac{1}{2}$. Of course, this argument shows that there is no loss of arbitrary $\varepsilon > 0$ in the exponent in \eqref{e:decoupling} when $p=2$; however, the loss for general $p$ and $q$ is completely inconsequential in our application of Theorem \ref{t:decoupling} below, since the exponent from \eqref{e:decoupling} will manifest itself in the exponent in the induced estimate \eqref{e:cdelta} and the subsequent summation of a geometric series already necessitates an open range for $\beta_-$.

\begin{lemma} \label{l:decouplingimplies}
Suppose $\varepsilon > 0$. Then estimate \eqref{e:decoupling} implies \eqref{e:cdelta}, with a bound depending on $\varepsilon$, when $q=r$ and $\eta = \gamma(p,q) + \frac{3-d}{2p} + \varepsilon$.
\end{lemma}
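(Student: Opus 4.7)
The plan is to apply the $\ell^p$-decoupling inequality \eqref{e:decoupling} to $g := \mathcal{C}_k\rho f$ --- whose Fourier transform is supported in $\mathcal{N}_k(\Gamma)$ (after the standard separation of $\tau>0$ and $\tau<0$ as in Section \ref{section:conethm}) --- and then to reduce each plate contribution to an angular piece of $f$ via the $\delta$-constraint in \eqref{e:rhofhat}. With $q=r$, Theorem \ref{t:decoupling} delivers
\[
\|\mathcal{C}_k\rho f\|_{L^q} \lesssim 2^{(\gamma(p,q)+\varepsilon)k}\bigg(\sum_{\theta\in\mathcal{P}_k(\Gamma)}\|\Pi_{k,\theta}\mathcal{C}_k\rho f\|_{L^p}^p\bigg)^{1/p},
\]
so what remains is to dominate the right-hand factor by $2^{k(3-d)/(2p)}\|f\|_{L^p}$.

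First I would exploit an \emph{angular localisation}: for $(\xi,\tau)$ in the multiplier support of $\Pi_{k,\theta}\mathcal{C}_k$, the cut-off $\chi_\theta$ confines $\xi'=\xi/|\xi|$ to a cap of radius $\sim 2^{-k/2}$ around some $\xi'_\theta\in\mathbb{S}^{d-1}$, while $\psi(2^k(|\xi|-\tau))$ forces $|\xi|-\tau\sim 2^{-k}$; hence the factor $\delta(v\cdot\xi+\tau)$ appearing in \eqref{e:rhofhat} confines $v$ to a cap $\Omega_\theta\subset\mathbb{S}^{d-1}$ of radius $O(2^{-k/2})$ centred at $-\xi'_\theta$. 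Consequently, setting $f_\theta(x,v):=f(x,v)\mathbf{1}_{\Omega_\theta}(v)$, one has the exact identity $\Pi_{k,\theta}\mathcal{C}_k\rho f = \Pi_{k,\theta}\mathcal{C}_k\rho f_\theta$.

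Next I would establish the per-plate bound $\|\Pi_{k,\theta}\mathcal{C}_k\rho f_\theta\|_{L^p}\lesssim 2^{k(3-d)/(2p)}\|f_\theta\|_{L^p}$ by interpolating between the endpoints $p=2$ and $p=\infty$. At $p=2$, the argument of Section \ref{subsection:directproof} --- via \eqref{e:nodeltarep} and Cauchy--Schwarz on the $(d-2)$-sphere $\Sigma_{\xi,\tau}$ of radius $\sim 2^{-k/2}$ --- already yields $\|\mathcal{C}_k\rho f_\theta\|_{L^2}\lesssim 2^{k(3-d)/4}\|f_\theta\|_{L^2}$, and $\Pi_{k,\theta}$ is $L^p\to L^p$ bounded uniformly in $\theta,k$ because its convolution kernel, a bump adapted to a plate of dimensions $1\times 2^{-k/2}\times\cdots\times 2^{-k/2}\times 2^{-k}$, has $L^1$ norm $\sim 1$. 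At $p=\infty$, the volumetric bound $|\rho f_\theta(x,t)|\le |\Omega_\theta|\,\|f_\theta\|_{L^\infty}\lesssim 2^{-k(d-1)/2}\|f_\theta\|_{L^\infty}$ combined with $L^\infty$-boundedness of $\Pi_{k,\theta}\mathcal{C}_k$ gives $\|\Pi_{k,\theta}\mathcal{C}_k\rho f_\theta\|_{L^\infty}\lesssim 2^{-k(d-1)/2}\|f_\theta\|_{L^\infty}$. Interpolation then produces the exponent $\frac{d+1}{2p}-\frac{d-1}{2}$, which is at most $\frac{3-d}{2p}$ precisely because $p\ge 2$.

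Finally, the bounded overlap of the caps $\{\Omega_\theta\}$ --- inherited from the $2^{-k/2}$-separation of the points $\{\xi'_\theta\}$ --- provides $\sum_\theta\|f_\theta\|_{L^p}^p\lesssim\|f\|_{L^p}^p$, and assembling the three estimates above yields \eqref{e:cdelta} with $\eta=\gamma(p,q)+\frac{3-d}{2p}+\varepsilon$. The most delicate point will be the angular localisation step, where one must take $\Omega_\theta$ to be a slight enlargement of the natural cap (by an absolute constant) in order to absorb the effective support of $v\mapsto\delta(v\cdot\xi+\tau)$ as $(\xi,\tau)$ ranges over the support of $\chi_\theta\cdot\psi(2^k(|\xi|-\tau))$; this enlargement is harmless since it preserves the bounded-overlap property.
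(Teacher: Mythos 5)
Your proof is correct, but it takes a genuinely different route at the key step. The paper decomposes the \emph{data} by smooth angular cutoffs in the spatial frequency variable: it introduces $\widetilde{\Pi}_\theta$ with symbol $\widetilde{\chi}_\theta(\xi')$, uses the commutation identity $\widetilde{\chi}_\theta(\xi')\widehat{\rho f}(\xi,\tau)=\mathcal{F}(\rho(\widetilde{\Pi}_\theta f))(\xi,\tau)$ to write $\Pi_{k,\theta}\rho f=\Pi_{k,\theta}\rho(\widetilde{\Pi}_\theta f)$, proves the per-plate bound \eqref{e:eachtheta} by interpolating \eqref{e:cdelta22} at $p=2$ with the trivial $L^\infty$ boundedness of $\rho$ (no gain is needed at $p=\infty$ since $2^{\frac{3-d}{2p}k}\to 1$), and then needs a separate $p=2$/$p=\infty$ argument to control $\sum_\theta\|\widetilde{\Pi}_\theta f\|_{L^p}^p$ by $\|f\|_{L^p}^p$. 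You instead decompose in the \emph{velocity} variable: the observation that on the symbol support of $\Pi_{k,\theta}\mathcal{C}_k$ the constraint $v\cdot\xi+\tau=0$ together with $|\xi|\sim 1$, $|\xi|-\tau\sim 2^{-k}$ confines $v$ to an $O(2^{-k/2})$-cap about $-\xi'_\theta$ is correct, and it gives the exact identity $\Pi_{k,\theta}\mathcal{C}_k\rho f=\Pi_{k,\theta}\mathcal{C}_k\rho f_\theta$ (with the constant enlargement you mention). This buys two things: the plate summation $\sum_\theta\|f_\theta\|_{L^p}^p\lesssim\|f\|_{L^p}^p$ is immediate from pointwise bounded overlap of the caps, with no interpolation needed; and your $p=\infty$ endpoint carries the cap-measure gain $2^{-k(d-1)/2}$, so your per-plate exponent $\frac{d+1}{2p}-\frac{d-1}{2}$ is strictly stronger than the required $\frac{3-d}{2p}$ for $p>2$ (the improvement cannot be exploited here, since the decoupling exponent $\gamma(p,q)$ is fixed, but it is a genuine refinement; the paper's choice is simply the laziest bound that suffices). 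One point to tighten: the uniform $L^\infty$ bound should be asserted for the composed symbol $\chi_\theta(\xi')\phi(|\xi|)\psi(2^k(|\xi|-\tau))^2$, which is a bump adapted to the plate and so has an $O(1)$ $L^1$ kernel; the symbol of $\Pi_{k,\theta}$ alone, $\chi_\theta(\xi')\psi(2^k(|\xi|-\tau))$, is supported on an unbounded conical slab (no radial localisation), so attributing the $L^1$-kernel bound to $\Pi_{k,\theta}$ by itself is imprecise --- harmless in your argument, since every occurrence is composed with $\mathcal{C}_k$, whose factor $\phi(|\xi|)$ (or the standing localisation of $\widehat{f}(\cdot,v)$ to $\mathfrak{A}_0$) restores the compact support.
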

\begin{proof}
In order to prove \eqref{e:cdelta}, we assume that $\widehat{f}(\cdot,v)$ is supported in $\mathfrak{A}_0$ for each $v \in \mathbb{S}^{d-1}$. First, we claim
\begin{equation} \label{e:eachtheta}
\| \Pi_{k,\theta} \rho f \|_{L^p} \lesssim 2^{\frac{3-d}{2p}k}\|\widetilde{\Pi}_\theta f\|_{L^p}
\end{equation}
for each $\theta \in \mathcal{P}_k(\Gamma)$. Here we write $\widetilde{\Pi}_\theta$ for the operator given by
\[
\mathcal{F}(\widetilde{\Pi}_\theta f)(\xi,v) = \widetilde{\chi}_\theta(\xi') \widehat{f}(\xi,v)
\]
where $\widetilde{\chi}_\theta$ is a smooth cut-off function such that $\chi_\theta = \chi_\theta\widetilde{\chi}_\theta$. Since
$$
\widetilde{\chi}_\theta(\xi') \widehat{\rho f}(\xi,\tau) = \mathcal{F}(\rho(\widetilde{\Pi}_\theta f))(\xi,\tau)
$$
we have $\Pi_{k,\theta} \rho f = \Pi_{k,\theta} \rho(\widetilde{\Pi}_{\theta} f)$, and we may directly apply \eqref{e:cdelta22} to show that \eqref{e:eachtheta} holds when $p=2$. Since $\rho$ is trivially a bounded operator $L^\infty \to L^\infty$, estimate \eqref{e:eachtheta} also holds when $p = \infty$, and the claim follows.

Applying \eqref{e:decoupling} and subsequently using \eqref{e:eachtheta}, it follows that for any $\varepsilon > 0$ we have the estimate
\begin{equation*}
\| \mathcal C_k  \rho f\|_{L^q} \leq C_\varepsilon 2^{(\gamma(p,q) + \frac{3-d}{2p} + \varepsilon)k}  \bigg( \sum_{\theta \in \mathcal{P}_k(\Gamma)} \|\widetilde{\Pi}_{\theta} f\|_{L^p}^p \bigg)^{1/p}
\end{equation*}
for some constant $C_\varepsilon < \infty$. By again considering $p=2$ and $p=\infty$, and once again making use of the fact that $\widehat{f}$ has support in $\mathfrak{A}_0$, one can show that $\sum_{\theta} \|\widetilde{\Pi}_{\theta} f\|_{p}^p \lesssim \|f\|_{p}^p$, thus completing our proof of the lemma.
\end{proof}
Proposition \ref{p:abstract} now immediately yields the following.
\begin{theorem} \label{t:Besov}
Suppose $d \geq 2$, $p \in [2,\infty)$, $q \in [p,\infty)$ and $s \in \mathbb{R}$. If $\beta_+, \beta_-$ satisfy \eqref{e:sscaling} and $\beta_- > \gamma(p,q) + \frac{3-d}{2p}$, then
\[
\|D_+^{\beta_+}D_-^{\beta_-} \rho f\|_{L^q} \lesssim \|f\|_{\dot{B}^s_{p,2}}
\]
holds for all $f \in \dot{B}^s_{p,2}$.
\end{theorem}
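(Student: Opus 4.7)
The plan is to deduce Theorem \ref{t:Besov} directly from Proposition \ref{p:abstract} by verifying its two hypotheses, namely the estimates \eqref{e:m0} and \eqref{e:cdelta}, for the correct value of $\eta$. Since we are in the pure-norm setting $q=r$, all the ingredients have been built up in the preceding discussion and the main work is to organise them consistently, check that the hypotheses of the supporting lemmas are satisfied, and handle the strict inequality on $\beta_-$.

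First, I would verify \eqref{e:m0}. With $q=r$ and $p \in [2,\infty)$, $q \in [p,\infty)$, we have $p \in [2,\infty]$ and $q,r \in [p,\infty]$, which is exactly the hypothesis of Lemma \ref{l:easybit}. So \eqref{e:m0} holds without further work. Next, to establish \eqref{e:cdelta}, I would invoke Lemma \ref{l:decouplingimplies}, which, for any $\varepsilon > 0$, produces \eqref{e:cdelta} with
\[
\eta = \gamma(p,q) + \frac{3-d}{2p} + \varepsilon.
\]
The hypothesis $(1/p,1/q) \in \mathcal{T}$ required to apply the $\ell^p$ decoupling theorem is guaranteed by $p \in [2,\infty)$ and $q \in [p,\infty)$.

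Since the assumption $\beta_- > \gamma(p,q) + \frac{3-d}{2p}$ is strict, I can select $\varepsilon > 0$ small enough to ensure $\beta_- > \eta$. With this choice, both hypotheses of Proposition \ref{p:abstract} are in place, and since also $q,r = q \in [2,\infty)$, the proposition yields
\[
\|D_+^{\beta_+}D_-^{\beta_-} \rho f\|_{L^q} \lesssim \|f\|_{\dot{B}^s_{p,2}}
\]
for every $f \in \dot{B}^s_{p,2}$, with $\beta_+, \beta_-$ constrained by the scaling relation \eqref{e:sscaling}. This completes the proof sketch.

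There is no real obstacle at this stage because the substance of the argument has been packaged into Lemma \ref{l:decouplingimplies} and Proposition \ref{p:abstract}; the genuine difficulty lay in invoking the Bourgain--Demeter $\ell^p$ decoupling theorem and combining it with the single-scale bound \eqref{e:cdelta22} via interpolation between $p=2$ and $p=\infty$, which was carried out in Lemma \ref{l:decouplingimplies}. The only points demanding minor care in the present argument are the verification that the exponent hypotheses align across the lemmas and the exploitation of the strict inequality in $\beta_-$ to absorb the $\varepsilon$-loss inherent in the decoupling estimate.
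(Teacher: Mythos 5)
Your proposal is correct and is essentially identical to the paper's own proof: the paper deduces Theorem \ref{t:Besov} immediately from Proposition \ref{p:abstract}, with \eqref{e:m0} supplied by Lemma \ref{l:easybit} and \eqref{e:cdelta} supplied by Lemma \ref{l:decouplingimplies}, absorbing the $\varepsilon$-loss from decoupling into the strict inequality $\beta_- > \gamma(p,q) + \frac{3-d}{2p}$. Your checks of the exponent hypotheses (in particular that $p\geq 2$, $q\geq p$ give $(\tfrac1p,\tfrac1q)\in\mathcal{T}$) match the paper's reasoning.
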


\section{Further results and remarks} \label{section:furthers}
\subsection{Multilinear velocity averaging}
The decoupling inequalities developed recently by Bourgain, Demeter and Guth, such as Theorem \ref{t:decoupling} above (see also \cite{BDG}), draw on recent developments in multilinear harmonic analysis, and in particular the fact that certain \emph{multilinear} estimates for the cone multiplier $\mathcal{C}_k$ are available in essentially optimal form. 
Such multilinear inequalities rely crucially on the multilinear Kakeya-type inequalities established in \cite{BCT}; see also \cite{CV}, \cite{Guthendpoint}, \cite{Guthshort}, \cite{Zhang}, \cite{BBFL}. Kakeya-type inequalities, being $X$-ray transform estimates, are themselves naturally formulated in terms of the kinetic transport equation and the velocity-averaging operator $\rho$; recall that the dual operator $\rho^*$ given by \eqref{rhod} %(THIS IS (3.2))
is simply a space-time $X$-ray transform. This perspective is somewhat implicit in the literature; see for example \cite{LT} and \cite{WolffX}. 
In multilinear settings, Kakeya-type inequalities are much better understood than their classical linear counterparts, and in some instances may be expressed quite directly as Strichartz estimates for the kinetic transport equation. Most notably, an elementary limiting argument reveals that the \emph{affine-invariant endpoint} multilinear Kakeya inequality (see \cite{BG} and \cite{CV}) 
is equivalent to the null-form estimate
\begin{equation}\label{general form kinetic}
\int_{\mathbb{R}}\int_{\mathbb{R}^d}\widetilde{\rho}(f_1,\hdots,f_{d+1})(t,x)^{1/d}\,\mathrm{d}x\mathrm{d}t\lesssim\prod_{j=1}^{d+1}\|f_j\|_{L^1_{x,v}}^{1/d}
\end{equation}
where
$$
\widetilde{\rho}(f_1,\hdots,f_{d+1})(t,x) = \int_{(\mathbb{R}^d)^{d+1}}\prod_{j=1}^{d+1}f_j(x-tv_j,v_j) \mathfrak{V}(v_1,\ldots,v_{d+1}) \prod_{\ell=1}^{d+1} \mathrm{d}\mu_\ell(v_\ell).
$$
and
$$
\mathfrak{V}(v_1,\ldots,v_{d+1}) = \left|\det\left(
\begin{array}{ccc}
1& \cdots & 1\\
v_1 &
\cdots & v_{d+1}\\
\end{array}\right)\right|
$$
In the above, $\mu_1,\hdots, \mu_{d+1}$ denote compactly supported positive Borel measures on $\mathbb{R}^d$, and $\|f_j\|_{L^1_{x,v}}$ is given with respect to Lebesgue measure in the spatial variable and $\mu_j$ in the velocity variable. Also, we clarify that $\mathfrak{V}(v_1,\ldots,v_{d+1})$ coincides with the volume of the simplex in $\mathbb{R}^d$ with vertices $v_j \in \mathbb{R}^d$, $1 \leq j \leq d+1$. Here we interpret $\widetilde{\rho}(f_1,\hdots,f_{d+1})$ as a $(d+1)$-linear variant of the linear operator $\rho$ defined in \eqref{rhobasic}; notice that without the determinant factor the left-hand side of \eqref{general form kinetic} simply becomes
$$
\int_{\mathbb{R}}\int_{\mathbb{R}^d}\prod_{j=1}^{d+1}\rho(f_j)(t,x)^{1/d}\,\mathrm{d}x\mathrm{d}t
$$
and so \eqref{general form kinetic} represents $L^1$ control of a part of this expression. The inequality \eqref{general form kinetic} may be viewed as a generalisation (or perturbation) of the classical affine-invariant Loomis--Whitney inequality, as the special case of measures $\mu_1,\hdots,\mu_{d+1}$ supported at non-cohyperplanar \emph{points} in $\mathbb{R}^{d}$ quickly reveals.

There are other, more elementary, velocity-averaging inequalities which draw on this multilinear perspective. For example, if $\rho$ is given by \eqref{rhobasic}, we have
\[
\|\rho f\|_{L^{d+1}_{t,x}}^{d+1} = \int_{(\mathbb{R}^d)^{d+1}} \int_{\mathbb{R}^d}\int_{\mathbb{R}} \prod_{j=1}^{d+1} f(x-tv_j,v_j) \, \mathrm{d}t\mathrm{d}x \prod_{\ell=1}^{d+1} \mathrm{d}\mu(v_\ell)
\]
and an application of the classical affine-invariant Loomis--Whitney inequality (see, for example, \cite{BB}) in the variable $(x,t)$ reveals the bound
$$
\|\rho f\|_{L^{d+1}_{t,x}}\lesssim I_{1/d}(\mu)^{\frac{1}{d+1}}\|f\|_{L^\infty_vL^d_x}
$$
where
$$
I_{1/d}(\mu):=\int_{(\mathbb{R}^d)^{d+1}}\mathfrak{V}(v_1,\ldots,v_{d+1})^{-1/d}
\,\prod_{\ell=1}^{d+1} \mathrm{d}\mu(v_\ell).
$$
Such ``energy functionals" are related to the notion of affine dimension, and present a more geometric and measure theoretic perspective on velocity averaging. Multilinear determinant functionals of this type are studied in \cite{Drury}, \cite{Gressman} and \cite{Vald}.

\subsection{Symmetric data} Here we exhibit various ways in which the smoothness regime in the central estimates in this work, namely those in Theorem \ref{t:thm-sobs}, may be broadened if we impose some symmetry hypotheses on the initial data. Such a phenomenon is well-known in surrounding contexts, including the following Strichartz estimates for the wave equation for radially symmetric data, whose range of validity should be compared with Proposition \ref{p:Str}.
\begin{proposition} \label{p:radialStr}
Suppose $q,r \in [2,\infty)$ and $\frac{1}{q} < (d-1)(\frac{1}{2}-\frac{1}{r})$. Then
\[
\| U(t)h \|_{L^q_tL^r_x} \lesssim \| h \|_{L^2}
\]
whenever $\widehat{h}$ is radially symmetric and supported in $\mathfrak{A}_0$. Here $U(t)$ denotes the half-wave propagator given by \eqref{e:halfwave}.
\end{proposition}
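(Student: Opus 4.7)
The plan is to establish an improved pointwise dispersive estimate for $U(t)h$ under the radial frequency support assumption, and then integrate it to recover the $L^q_tL^r_x$ bound. Writing $\widehat{h}(\xi)=H(|\xi|)$ with $H$ supported in $[\tfrac{1}{2},2]$, a passage to polar coordinates in the defining integral of $U(t)h(x)$ turns the angular integration over $\mathbb{S}^{d-1}$ into $\widehat{d\sigma}(\rho x)$, which is a constant multiple of $(\rho|x|)^{-(d-2)/2}J_{(d-2)/2}(\rho|x|)$. For $|x|\lesssim 1$, the resulting amplitude in $\rho$ is smooth with compact support, the phase is $\rho t$, and repeated integration by parts gives $|U(t)h(x)|\lesssim_N (1+|t|)^{-N}\|h\|_{L^2}$ for every $N\geq 1$.

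For $|x|\gtrsim 1$, I would invoke the standard Bessel asymptotic $J_\nu(z)=\sqrt{2/(\pi z)}\cos(z-\nu\pi/2-\pi/4)+O(z^{-3/2})$ to rewrite $U(t)h(x)$ as a sum of two one-dimensional oscillatory integrals in $\rho\in[\tfrac{1}{2},2]$ with phases $\rho(t\pm|x|)$ and amplitudes of size $|x|^{-(d-1)/2}$. Repeated integration by parts (using smoothness and compact support of $H$, together with the fact that $t-|x|$ and $t+|x|$ cannot both be small when $|t|$ or $|x|$ is large) then yields
\[
|U(t)h(x)| \lesssim_N |x|^{-(d-1)/2}\bigl((1+|t-|x||)^{-N}+(1+|t+|x||)^{-N}\bigr)\|h\|_{L^2}
\]
for any $N\geq 1$; the required control of $\|H\|_{L^1}$ and of $\rho$-derivatives of $H$ in terms of $\|h\|_{L^2}$ is automatic from the fixed compact support of $H$ together with the Cauchy--Schwarz inequality.

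With this pointwise estimate in hand, for $|t|\gg 1$ the bulk of $\|U(t)h\|_{L^r_x}^r$ comes from the annulus $||x|-|t||\lesssim 1$ of measure $\sim|t|^{d-1}$, giving $\|U(t)h\|_{L^r_x}\lesssim|t|^{-(d-1)(\tfrac{1}{2}-\tfrac{1}{r})}\|h\|_{L^2}$; the tail contributions (from $|x|\lesssim 1$ and from $|x|\gtrsim 1$ with $||x|-|t||\gtrsim 1$) are negligible upon choosing $N$ large. For $|t|\lesssim 1$, combining energy conservation $\|U(t)h\|_{L^2_x}=\|h\|_{L^2}$ with Bernstein's inequality (valid since $\widehat{U(t)h}$ is supported in $\mathfrak{A}_0$) gives $\|U(t)h\|_{L^r_x}\lesssim\|h\|_{L^2}$ uniformly. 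Raising to the $q$th power and integrating in $t$ converges precisely when $q(d-1)(\tfrac{1}{2}-\tfrac{1}{r})>1$, i.e., $\tfrac{1}{q}<(d-1)(\tfrac{1}{2}-\tfrac{1}{r})$, which is exactly the hypothesis.

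The main technical hurdle is making the pointwise bound rigorous in the transition region $\rho|x|\sim 1$, where the Bessel asymptotic has error terms comparable to the leading term; the standard fix is a dyadic decomposition in $|x|$, bounding the small-$|x|$ piece crudely by $\|H\|_{L^1}\lesssim\|h\|_{L^2}$ and running the oscillatory integral argument only for $\rho|x|\gtrsim 1$. The strict inequality in the hypothesis is visibly sharp for this method, corresponding to the logarithmic divergence at the endpoint $\tfrac{1}{q}=(d-1)(\tfrac{1}{2}-\tfrac{1}{r})$ in the final $t$-integration.
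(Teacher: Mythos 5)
The paper itself does not prove Proposition \ref{p:radialStr}; it quotes it from the literature (\cite{ChoOzawa}, \cite{FangWang2008}, \cite{HidanoKurokawa}, \cite{KM}, \cite{Sterbenz}), so any self-contained argument is welcome in principle. However, your proposal has a genuine gap: the pointwise dispersive bounds you claim, namely $|U(t)h(x)|\lesssim_N (1+|t|)^{-N}\|h\|_{L^2}$ for $|x|\lesssim 1$ and $|U(t)h(x)|\lesssim_N |x|^{-(d-1)/2}\bigl((1+|t-|x||)^{-N}+(1+|t+|x||)^{-N}\bigr)\|h\|_{L^2}$ for $|x|\gtrsim 1$, are false. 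The hypothesis only gives $\widehat{h}(\xi)=H(|\xi|)$ with $H\in L^2([\tfrac12,2])$; there is no smoothness of $H$, and the repeated integrations by parts in $\rho$ that you invoke require bounds on $H'$, $H''$, \dots, which are \emph{not} controlled by $\|h\|_{L^2}$ (Cauchy--Schwarz only gives $\|H\|_{L^1}\lesssim\|h\|_{L^2}$). Concretely, take $H(\rho)=e^{-iT\rho}\chi(\rho)$ with $\chi$ a fixed bump, so that $U(t)h=U(t-T)h_0$ where $h_0$ has profile $\chi$ and $\|h\|_{L^2}\sim 1$. At $t=T$, $x=0$ one has $|U(T)h(0)|=|\int\chi(|\xi|)\,\mathrm{d}\xi|\sim 1$, contradicting the claimed $(1+T)^{-N}$ decay; and at $t=0$ the function $h$ itself concentrates near $|x|\approx T$ with amplitude $\sim T^{-(d-1)/2}$, while your second bound would force an extra factor $(1+T)^{-N}$ since both $|t\pm|x||\approx T$ there. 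The point is that modulating $H$ translates the solution in time, so no pointwise-in-$t$ decay with an $L^2$ right-hand side can hold; decay in $t\mp|x|$ is a property of a \emph{fixed} smooth profile, and a density argument cannot rescue it because the implicit constants depend on derivatives of $H$.

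The part of your argument that is genuinely radial and salvageable is the Bessel representation and the $|x|^{-(d-1)/2}$ gain; what must replace the pointwise decay is orthogonality in time. The arguments in the cited references proceed in exactly this way: after writing, for $|x|\sim R\gtrsim 1$, the main term as $|x|^{-(d-1)/2}$ times one-dimensional oscillatory integrals with phases $\rho(t\pm|x|)$, one estimates these in $L^2$ of the variable $t\pm|x|$ by Plancherel, obtaining bounds of the shape $\sup_{|x|\sim R}\|U(t)h(x)\|_{L^2_t}\lesssim R^{-(d-1)/2}\|h\|_{L^2}$ (with a crude companion bound for $R\lesssim 1$), and then interpolates with the energy/Bernstein estimate and sums dyadically in $R$; the strict inequality $\tfrac1q<(d-1)(\tfrac12-\tfrac1r)$ is precisely what makes the dyadic sum converge. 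Your final computation (integrability of $|t|^{-q(d-1)(\frac12-\frac1r)}$) correctly locates where the exponent condition should enter, but the pointwise input it relies on is unavailable at the stated level of generality, so as written the proof does not go through.
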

We refer the reader to \cite{ChoOzawa}, \cite{FangWang2008}, \cite{HidanoKurokawa}, \cite{KM}, \cite{Sterbenz} for details. Below we establish some improved smoothing estimates for the velocity average $\rho$ acting on $L^2$ initial data which are radial in the spatial variable, and specialising further to initial data which are radial in the spatial variable and independent of the velocity variable; we denote these classes as $L^2_{\textrm{rad}(x)}$ and $L^2_{\textrm{rad}(x,v)}$. Again, we focus on the case $V = \mathbb{S}^{d-1}$.

These results will improve upon Theorem \ref{t:thm-sobs} for such classes of data and our approach will follow the direct analysis in Section \ref{section:direct}; we re-emphasise that, as shown in Subsection \ref{subsection:directproof}, the role of the Strichartz estimates for the wave equation is to establish \eqref{e:cdeltag} (applied to $g = \rho f$) which in turn allow us to work on $L^2$. For $f \in L^2_{\textrm{rad}(x)}$, it is not necessarily true that $\rho f$ is radially symmetric, thus the additional gain only arises in an improvement in \eqref{e:cdelta22}. However, if $f \in L^2_{\textrm{rad}(x,v)}$, then $\rho f$ is radially symmetric and yet further gain is available in \eqref{e:cdeltag} by exploiting Proposition \ref{p:radialStr}.

As outlined above, the direct approach rests on sharp estimates in the case $(q,r) = (2,2)$, and so we begin here. Our argument naturally leads to estimates beyond initial data in $L^2$ by introducing Sobolev regularity with respect to the velocity variable.
\begin{theorem} \label{t:roughdata}
Suppose $d \geq 2$, $s \in [-\frac{d-2}{2},0]$, $\beta_+ + \beta_- = \frac{1}{2}$ and
$
\beta_- > - s - \frac{d-2}{2}.
$
Then
\begin{equation*}
\| \diffp\diffn  \rho f\|_{L^{2}}\lesssim \| (1 - \Delta)^{s/2} f\|_{L^{2}}
\end{equation*}
holds for all $f \in L^2_{\emph{\textrm{rad}}(x)}$.
\end{theorem}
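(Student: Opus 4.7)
The plan is to establish the endpoint $(q,r)=(2,2)$ estimate directly on the Fourier side via Plancherel's theorem, exploiting the radial symmetry of $f$ in $x$ to diagonalise the problem through a spherical harmonic expansion in the velocity variable combined with the Funk--Hecke theorem.

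By Plancherel,
\[
\|\diffp\diffn\rho f\|_{L^2}^2 \sim \iint |\widehat{\rho f}(\xi,\tau)|^2 (|\xi|+|\tau|)^{2\beta_+}\big||\xi|-|\tau|\big|^{2\beta_-}\, \mathrm{d}\xi \mathrm{d}\tau.
\]
Since $f$ is radial in $x$, the partial Fourier transform $\widehat{f}(\xi,v)$ depends on $\xi$ only through $r=|\xi|$, and we expand
\[
\widehat{f}(\xi,v) = \sum_{n\ge 0}\sum_{k} c_{n,k}(|\xi|)\, Y_n^k(v),
\]
where $\{Y_n^k\}_k$ is an orthonormal basis of the spherical harmonics of degree $n$ on $\mathbb{S}^{d-1}$. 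Recalling that $-\Delta_v$ acts as $n(n+d-2)\sim(1+n)^2$ on this space,
\[
\|(1-\Delta_v)^{s/2}f\|_{L^2}^2 \sim \int_0^\infty r^{d-1}\sum_{n,k}(1+n)^{2s}|c_{n,k}(r)|^2\, \mathrm{d}r.
\]
Substituting the expansion into \eqref{e:rhofhat} and applying the Funk--Hecke theorem (interpreted in the distributional sense, analogous to Lemma \ref{l:multiplierroot}), each summand produces
\[
\int_{\mathbb{S}^{d-1}}\delta(v\cdot\xi+\tau)Y_n^k(v)\,\mathrm{d}\sigma(v) = \frac{1}{|\xi|}\mu_n\!\Big(\!-\frac{\tau}{|\xi|}\Big) Y_n^k\!\Big(\frac{\xi}{|\xi|}\Big),
\]
where $\mu_n(t) = |\mathbb{S}^{d-2}|\,\mathbf{1}_{[-1,1]}(t)(1-t^2)^{(d-3)/2}C_n^{(d-2)/2}(t)/C_n^{(d-2)/2}(1)$ and $C_n^\nu$ is the Gegenbauer polynomial. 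Integrating $|\widehat{\rho f}|^2$ first over the unit sphere in the $\xi$-direction using Parseval, then passing to polar coordinates $\xi=re$ and changing variables $\lambda=\tau/r$, the multiplier becomes $r(1+|\lambda|)^{2\beta_+}(1-|\lambda|)^{2\beta_-}$ because $\beta_++\beta_-=\tfrac{1}{2}$; the $r$- and $\lambda$-integrations decouple and the problem reduces to showing
\[
I_n := \int_{-1}^{1}|\mu_n(\lambda)|^2(1+|\lambda|)^{2\beta_+}(1-|\lambda|)^{2\beta_-}\,\mathrm{d}\lambda \lesssim (1+n)^{2s}
\]
uniformly in $n\ge 0$.

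The final step is a careful analysis of $I_n$ using Gegenbauer polynomial asymptotics. Note that $(1+|\lambda|)^{2\beta_+}$ is harmless since it is bounded between $1$ and $2^{2\beta_+}$, so only the boundary behaviour at $|\lambda|=1$ weighted by $(1-|\lambda|)^{2\beta_-}$ is delicate. We split $[-1,1]$ into the bulk region $|\lambda|\le 1-1/n$ and the two boundary layers of thickness $1/n$. On the bulk, we use the Darboux-type asymptotics
\[
\left|\frac{C_n^{(d-2)/2}(\cos\theta)}{C_n^{(d-2)/2}(1)}\right|^2 \lesssim \frac{1}{(1+n)^{d-2}(\sin\theta)^{d-2}}
\]
to bound $|\mu_n(\lambda)|^2\lesssim (1+n)^{-(d-2)}(1-\lambda^2)^{(d-4)/2}$; on the boundary layers we use the trivial estimate $|C_n^{(d-2)/2}(\lambda)|\le C_n^{(d-2)/2}(1)$, yielding $|\mu_n(\lambda)|^2\lesssim (1-\lambda^2)^{d-3}$. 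A direct computation then shows the extremal contribution comes from the boundary layers, of size $(1+n)^{-(d-2)-2\beta_-}$, which is bounded by $(1+n)^{2s}$ precisely at the threshold $\beta_- = -s-(d-2)/2$; the bulk contribution is controlled by the same bound whenever $\beta_-> -s-(d-2)/2$.

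The main technical obstacle is thus the bookkeeping that unifies the bulk and boundary estimates across the full range $s\in[-\tfrac{d-2}{2},0]$, and in particular verifying that the condition $\beta_->-s-(d-2)/2$ is simultaneously sufficient on both regions. A separate but routine treatment is needed for $d=2$, where $\mathbb{S}^{d-2}$ degenerates and the Gegenbauer framework reduces to Chebyshev polynomials of the first kind (equivalently, Fourier series on $\mathbb{S}^1$); here the hypothesis forces $s=0$ so only the boundary layer analysis at $\beta_->0$ is non-trivial.
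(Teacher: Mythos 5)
Your proposal is correct and follows essentially the same route as the paper: Plancherel, a spherical harmonic expansion in $v$ combined with the Funk--Hecke theorem applied to the delta function (the paper's Lemma \ref{l:rhoradial}), reduction to the uniform bound $I_n \lesssim (1+n)^{2s}$ on the weighted integrals of the squared degree-$n$ Legendre/Gegenbauer polynomials, and the same two pointwise estimates (the trivial bound near $|\lambda|=1$ and the Darboux-type decay in the bulk, which the paper packages as the single min bound \eqref{e:Legendrebound}). The only differences are cosmetic: you split explicitly into bulk and boundary layers of width $1/n$ and phrase things via $C_n^{(d-2)/2}/C_n^{(d-2)/2}(1)$ rather than $p_{d,k}$, which amounts to the same computation.
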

Here, $\Delta$ is the Laplace--Beltrami operator on $\mathbb{S}^{d-1}$ acting on the velocity variable. As a simple comparison, taking $s=0$, we see that the range $\beta_- > \frac{2-d}{2}$ is allowed for \eqref{e:sobs} for $f \in L^2_{\textrm{rad}(x)}$, extending the range $\beta_- \geq \frac{3-d}{4}$ for general $f \in L^2$.

Now fix $f \in L^2_{\textrm{rad}(x)}$ and write
$
\widehat{f}(\xi,v) = F_0(|\xi|,v)
$
for $f \in L^2_{\textrm{rad}(x)}$. Then, for each $r > 0$, we have the representation
\begin{equation} \label{e:radialexpansion}
F_0(r,v) = \sum_{k=0}^\infty Y_k^{r}(v)
\end{equation}
in terms of the basis of spherical harmonics for $L^2(\mathbb{S}^{d-1})$. Using polar coordinates, we may then write
\[
\| (1 - \Delta)^{s/2} f\|_{L^{2}}^2 = \frac{|\mathbb{S}^{d-1}|}{(2\pi)^{d}}  \sum_{k = 0}^\infty (1 + k(k+d-2))^s \int_0^\infty \|Y^r_k\|_2^2 \,r^{d-1} \, \mathrm{d}r
\]
since $\Delta Y_k^r = - k(k+d-2)Y_k^r$. The key point in the proof of Theorem \ref{t:roughdata} is to obtain the corresponding representation of $\widehat{\rho f}$ in terms of spherical harmonics.
\begin{lemma} \label{l:rhoradial}
Suppose $d \geq 2$ and $f$ is given by \eqref{e:radialexpansion}. Then
\[
\widehat{\rho f}(\xi,\tau) = \frac{2\pi|\mathbb{S}^{d-2}|}{|\xi|} \bigg( 1 - \frac{\tau^2}{|\xi|^2} \bigg)^{\frac{d-3}{2}}_+ \sum_{k=0}^\infty p_{d,k}(-\tfrac{\tau}{|\xi|}) Y_k^{|\xi|}(\xi')
\]
for each $(\xi,\tau) \in \mathbb{R}^{d+1}$ with $\xi \neq 0$.
\end{lemma}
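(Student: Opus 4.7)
The plan is to start from the delta-function representation~\eqref{e:rhofhat}, substitute the spherical harmonic expansion~\eqref{e:radialexpansion} of $\widehat{f}(\xi,v)=F_0(|\xi|,v)$, and reduce matters to evaluating, for each fixed $\xi\ne 0$ and each spherical harmonic $Y_k^{|\xi|}$ of degree $k$, the sliced integral
$$
I_k(\xi,\tau)=\int_{\mathbb{S}^{d-1}}\delta(v\cdot\xi+\tau)\,Y_k^{|\xi|}(v)\,\mathrm{d}\sigma(v).
$$
Writing $\omega=\xi'=\xi/|\xi|$ and $t=-\tau/|\xi|$, the scaling property of $\delta$ gives $I_k(\xi,\tau)=|\xi|^{-1}J_k(\omega,t)$, where $J_k(\omega,t)=\int_{\mathbb{S}^{d-1}}\delta(v\cdot\omega-t)\,Y_k^{|\xi|}(v)\,\mathrm{d}\sigma(v)$. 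The slice decomposition $\mathrm{d}\sigma(v)=(1-s^2)^{(d-3)/2}\,\mathrm{d}s\,\mathrm{d}\sigma(\tilde v)$ corresponding to $v=s\omega+\sqrt{1-s^2}\,\tilde v$, $\tilde v\in\omega^\perp\cap\mathbb{S}^{d-1}$ (exactly as used in Lemma~\ref{l:multiplierroot}), reduces $J_k$ to $(1-t^2)_+^{(d-3)/2}$ times the average of $Y_k^{|\xi|}$ over the $(d-2)$-sphere $\{t\omega+\sqrt{1-t^2}\,\tilde v:\tilde v\in\omega^\perp\cap\mathbb{S}^{d-1}\}$.

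The key ingredient is the classical zonal-harmonic (addition) identity on $\mathbb{S}^{d-1}$: for every spherical harmonic $Y_k$ of degree $k$,
$$
\frac{1}{|\mathbb{S}^{d-2}|}\int_{\omega^\perp\cap\mathbb{S}^{d-1}}Y_k\bigl(t\omega+\sqrt{1-t^2}\,\tilde v\bigr)\,\mathrm{d}\sigma(\tilde v)=p_{d,k}(t)\,Y_k(\omega),
$$
where $p_{d,k}$ denotes the normalised Gegenbauer polynomial $C_k^{(d-2)/2}/C_k^{(d-2)/2}(1)$ (with the usual limiting interpretation when $d=2$). This is essentially the Funk--Hecke theorem applied to the distribution $\phi(s)=\delta(s-t)$, and can be proved directly by observing that the left-hand side is a homogeneous harmonic polynomial of degree $k$ in $\omega$ which is invariant under rotations fixing the axis $\omega$, hence a multiple of $Y_k(\omega)$, the constant being determined by specialising $Y_k$ to a zonal harmonic. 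Combining this identity with the slice reduction gives
$$
I_k(\xi,\tau)=\frac{|\mathbb{S}^{d-2}|}{|\xi|}\,\mathbf{1}_{\mathfrak{C}}(\xi,\tau)\Bigl(1-\frac{\tau^2}{|\xi|^2}\Bigr)_+^{(d-3)/2}p_{d,k}\bigl(-\tfrac{\tau}{|\xi|}\bigr)\,Y_k^{|\xi|}(\xi'),
$$
and summing over $k$ (after multiplying by $2\pi$) yields the claimed formula.

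The only technical concern is the interchange of the sum in~\eqref{e:radialexpansion} with the singular integral defining $\widehat{\rho f}$. Since $|p_{d,k}(t)|\le 1$ on $[-1,1]$ and the series~\eqref{e:radialexpansion} converges in $L^2(\mathbb{S}^{d-1})$ for each fixed $|\xi|$, the partial sums of~\eqref{e:radialexpansion} produce partial sums of the claimed expansion that converge to $\widehat{\rho f}$ in the sense of distributions (testing against a Schwartz function in $\tau$ and using the slice representation~\eqref{e:nodeltarep}), which is more than sufficient for the applications to follow. I do not foresee any serious obstacle; the main delicate book-keeping is the $|\xi|^{-1}$ arising from $\delta(v\cdot\xi+\tau)=|\xi|^{-1}\delta(v\cdot\omega-t)$, which is precisely what produces the prefactor $2\pi|\mathbb{S}^{d-2}|/|\xi|$ in the statement.
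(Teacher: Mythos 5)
Your argument is correct and is essentially the paper's own proof: both reduce to evaluating $\int_{\mathbb{S}^{d-1}}\delta(v\cdot\xi+\tau)Y_k^{|\xi|}(v)\,\mathrm{d}\sigma(v)$ and apply the Funk--Hecke theorem (Theorem \ref{t:FH}) with $F$ taken to be the delta slice, yielding the factor $|\mathbb{S}^{d-2}|\,p_{d,k}(-\tfrac{\tau}{|\xi|})(1-\tfrac{\tau^2}{|\xi|^2})_+^{\frac{d-3}{2}}$ and hence the stated formula. Your additional remarks (the direct zonal-harmonic justification of the sliced Funk--Hecke identity and the distributional interchange of the sum with the singular integral) merely flesh out steps the paper leaves implicit.
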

The proof relies on the following classical theorem from harmonic analysis whose statement requires the introduction of the Legendre polynomial $p_{d,k}$ of degree $k$ in $d$ dimensions. We may define $p_{d,k}$ by the Rodrigues representation formula
\begin{equation*}
(1-t^2)^{\frac{d-3}{2}}p_{d,k}(t)=(-1)^k \frac{\Gamma(\frac{d-1}{2})}{2^k\Gamma(k+\frac{d-1}{2})} \frac{\mathrm{d}^k}{\mathrm{d}t^k}(1-t^2)^{k+\frac{d-3}{2}}
\end{equation*}
and we refer the reader to \cite{AH} for this definition and terminology.
\begin{theorem}[Funk--Hecke] \label{t:FH}
Let $d \geq 2$, $k \in \mathbb{N}_0$ and $Y_k$ be a spherical harmonic of degree $k$. Then
\begin{equation*}
\int_{\mathbb{S}^{d-1}} F(\omega \cdot \omega') Y_k(\omega') \, \mathrm{d}\sigma(\omega') = \zeta_k Y_k(\omega)
\end{equation*}
for any $\omega \in \mathbb{S}^{d-1}$ and any function $F \in L^1([-1,1],(1-\lambda^2)^{\frac{d-3}{2}})$. Here
\begin{equation*}
\zeta_k = |\mathbb{S}^{d-2}| \int_{-1}^1 F(\lambda) p_{d,k}(\lambda) (1-\lambda^2)^{\frac{d-3}{2}} \, \mathrm{d}\lambda.
\end{equation*}
\end{theorem}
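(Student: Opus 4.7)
The plan is to exploit the rotation invariance of the integral kernel $F(\omega\cdot\omega')$ together with the representation theory of $O(d)$ on spaces of spherical harmonics, and then pin down the multiplier $\zeta_k$ by a calculation with zonal harmonics.

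First I would define, for fixed admissible $F$, the integral operator
\[
T_F g(\omega) = \int_{\mathbb{S}^{d-1}} F(\omega \cdot \omega') g(\omega')\,\mathrm{d}\sigma(\omega'),
\]
initially for bounded $g$. Since $F(\omega\cdot\omega')$ depends on $\omega,\omega'$ only through the rotation-invariant quantity $\omega\cdot\omega'$, the operator $T_F$ commutes with the natural action of $O(d)$ on $L^2(\mathbb{S}^{d-1})$ by composition. The classical fact that each space $\mathcal{H}_k$ of spherical harmonics of degree $k$ is an irreducible $O(d)$-representation, combined with Schur's lemma, then forces $T_F$ to act on $\mathcal{H}_k$ as a scalar multiple $\zeta_k$ of the identity. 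This gives the identity $T_F Y_k = \zeta_k Y_k$ for every $Y_k \in \mathcal{H}_k$, with $\zeta_k$ depending only on $d$, $k$, and $F$.

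Next I would compute $\zeta_k$ by testing the identity on the zonal harmonic $Z_k(\omega')=p_{d,k}(\omega'\cdot e_d)$, which is the (normalised) element of $\mathcal{H}_k$ invariant under rotations fixing $e_d$, evaluated at $\omega=e_d$. Since $p_{d,k}(1)=1$ (a normalisation built into the Rodrigues formula displayed before the theorem), one has $Z_k(e_d)=1$, and so
\[
\zeta_k = \int_{\mathbb{S}^{d-1}} F(\omega' \cdot e_d)\, p_{d,k}(\omega'\cdot e_d)\,\mathrm{d}\sigma(\omega').
\]
Parametrising $\mathbb{S}^{d-1}$ by $\omega'=(\sin\theta\,\tilde\omega',\cos\theta)$ with $\tilde\omega'\in\mathbb{S}^{d-2}$ and $\theta\in[0,\pi]$, so that $\mathrm{d}\sigma(\omega')=(\sin\theta)^{d-2}\,\mathrm{d}\theta\,\mathrm{d}\sigma(\tilde\omega')$, and then setting $\lambda=\cos\theta$, reduces this to the claimed one-dimensional integral
\[
\zeta_k = |\mathbb{S}^{d-2}|\int_{-1}^1 F(\lambda)\,p_{d,k}(\lambda)\,(1-\lambda^2)^{\frac{d-3}{2}}\,\mathrm{d}\lambda,
\]
and explains the integrability hypothesis $F\in L^1([-1,1],(1-\lambda^2)^{(d-3)/2})$: it is exactly what makes this formula well-defined and the reduction legitimate.

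Finally, I would extend from $Z_k$ at $e_d$ to an arbitrary $Y_k\in\mathcal{H}_k$ at an arbitrary $\omega\in\mathbb{S}^{d-1}$ purely by the rotation-invariance argument from the first step, since $O(d)$ acts transitively on $\mathbb{S}^{d-1}$ and $\mathcal{H}_k$ is irreducible, so the value of $\zeta_k$ already computed is the single eigenvalue of $T_F$ on all of $\mathcal{H}_k$. The step I expect to be the main technical obstacle is justifying the application of Schur's lemma in this $L^1$-weighted setting: one must verify that $T_F$ is well defined and bounded on each $\mathcal{H}_k$ (not just on $L^2(\mathbb{S}^{d-1})$), which I would do by a straightforward density argument approximating $F$ in the weighted $L^1$ norm by bounded $F$'s and passing to the limit in both sides of $T_F Y_k=\zeta_k Y_k$, using that $Y_k$ is bounded and that both sides depend linearly and continuously on $F$.
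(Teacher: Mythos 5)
Your proposal is correct, but note that the paper does not prove this statement at all: Theorem \ref{t:FH} is quoted as a classical result and the reader is referred to \cite{AH} for its proof. The argument you give is the standard symmetry-based proof: $T_F$ commutes with the $O(d)$-action, hence preserves each space $\mathcal{H}_k$ (these being pairwise non-isomorphic isotypic components of $L^2(\mathbb{S}^{d-1})$), acts there as a scalar by irreducibility, and the scalar is read off by testing on the zonal harmonic at the pole, where the slice parametrisation and $p_{d,k}(1)=1$ produce exactly the stated $\zeta_k$; the weighted $L^1$ hypothesis is precisely what makes the one-dimensional integral finite, and your closing density argument handles general $F$. This route differs from the expansion-based proof one typically finds in \cite{AH}, which writes $F$ as a weighted-$L^2$ series in the polynomials $p_{d,n}$ and invokes the addition theorem together with orthogonality; your version is shorter and avoids the addition theorem, at the cost of two points you gloss over slightly: (i) Schur's lemma should be applied either to the complexification of $\mathcal{H}_k$ (which is irreducible under $O(d)$, including for $d=2$ where reflections are needed) or replaced by the elementary observation that the symmetric operator $T_F$ has a $G$-invariant eigenspace which must be all of $\mathcal{H}_k$; and (ii) the fact that $\omega'\mapsto p_{d,k}(\omega'\cdot e_d)$ genuinely lies in $\mathcal{H}_k$ is itself a (classical) fact requiring verification from the Rodrigues formula rather than being immediate from it. Neither point is a gap in substance, and the proof is sound.
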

We also suggest that the reader consults \cite{AH} for a treatment of the Funk--Hecke theorem.
\begin{proof}[Proof of Lemma \ref{l:rhoradial}]
As an immediate application of Theorem \ref{t:FH}, using \eqref{e:rhofhat} we obtain
\begin{align*}
\widehat{\rho f}(\xi,\tau) & = \frac{2\pi}{|\xi|} \sum_{k=0}^\infty \int_{\mathbb{S}^{d-1}} Y^{|\xi|}_k(v) \delta(\tfrac{\tau}{|\xi|} + \xi' \cdot v) \, \mathrm{d}\sigma(v) \\
& = \frac{2\pi}{|\xi|} \sum_{k=0}^\infty \zeta_k(\xi,\tau) Y^{|\xi|}_k(\xi')
\end{align*}
where
\begin{align*}
\zeta_k(\xi,\tau) & = |\mathbb{S}^{d-2}| \int_{-1}^1 \delta(\tfrac{\tau}{|\xi|} +\lambda) p_{d,k}(\lambda) (1-\lambda^2)^{\frac{d-3}{2}} \, \mathrm{d}\lambda.
\end{align*}
The claimed expression in the statement of Lemma \ref{l:rhoradial} follows.
\end{proof}
\begin{proof}[Proof of Theorem \ref{t:roughdata}]
Using Lemma \ref{l:rhoradial}, polar coordinates and orthogonality of $(Y^r_k)_{k \in \mathbb{N}_0}$ for each fixed $r > 0$,
\begin{align*}
\| \diffp\diffn  \rho f\|_{L^{2}}^2 & = \frac{|\mathbb{S}^{d-2}|^2}{(2\pi)^{d-1}} \sum_{k=0}^\infty \int_0^\infty \int_{-r}^r (r + |\tau|)^{2\beta_+} (r - |\tau|)^{2\beta_-} \times \\
& \qquad \qquad \bigg(1 - \frac{\tau^2}{r^2}\bigg)^{d-3} |p_{d,k}(-\tfrac{\tau}{r})|^2 \|Y_k^r\|_{L^2}^2 \, r^{d-3} \, \mathrm{d}\tau \mathrm{d}r
\end{align*}
and since $\beta_+ + \beta_- = \frac{1}{2}$, we have
\[
\| \diffp\diffn  \rho f\|_{L^{2}}^2 = \frac{2|\mathbb{S}^{d-2}|^2}{(2\pi)^{d-1}}\sum_{k=0}^\infty I_k \int_0^\infty \|Y_k^r\|_{L^2}^2 \, r^{d-1} \, \mathrm{d}r
\]
where
\[
I_k =  \int_{0}^1 |p_{d,k}(\lambda)|^2 (1+\lambda)^{d-3 + 2\beta_+} (1-\lambda)^{d-3 + 2\beta_-} \, \mathrm{d}\lambda.
\]
We now invoke the pointwise estimate
\begin{equation} \label{e:Legendrebound}
|p_{d,k}(\lambda)| \leq \min \{1, C_d k^{\frac{2-d}{2}}(1-\lambda^2)^{\frac{2-d}{2}}\}
\end{equation}
for each $|\lambda| < 1$ and $k \geq 1$, with explicit constant given by $C_d = 2^{d-2}\pi^{-1/2}\Gamma(\frac{d-1}{2})$. A proof of these estimates can be found, for example, in \cite{AH} (see the inequalities labelled (2.116) and (2.117) on pages 58--59).

It follows immediately from \eqref{e:Legendrebound} that
$
I_k \lesssim k^{2s}
$
for all $k \geq 1$, provided $s \in [-\frac{d-2}{2},0]$ and
$
\beta_- > - s - \frac{d-2}{2}.
$
Also, $p_{d,0} = 1$, so $I_0 \lesssim 1$ provided $\beta_- > - \frac{d-2}{2}$. It follows from the above that $\| \diffp\diffn  \rho f\|_{L^{2}} \lesssim \| (1 - \Delta)^{s/2} f\|_{L^{2}}$ for $s \in [-\frac{d-2}{2},0]$ and
$
\beta_- > - s - \frac{d-2}{2}.
$
\end{proof}

Using the above analysis as a key ingredient, we provide the following improvement to Theorem \ref{t:thm-sobs} for general $q$ and $r$ for initial data in $L^2_{\textrm{rad}(x)}$ and $L^2_{\textrm{rad}(x,v)}$. For simplicity we state the result with no scale for smoothing in the velocity variable; the interested reader may follow the above approach to generalise the result accordingly.
\begin{theorem} \label{t:radial000}
Let $d \geq 2$, $q,r\in [2,\infty)$ and suppose $\beta_+$, $\beta_-$ satisfy \eqref{e:scaling}. If
\[
\beta_- > \max\bigg\{ \frac{1}{q} + \frac{d-1}{2r} -\frac{3(d-1)}{4}, \frac{1-d}{2} \bigg\}
\]
then \eqref{e:sobs} holds for all $f \in L^2_{\emph{\textrm{rad}}(x)}$, and if
\[
\beta_- > \max\bigg\{ \frac{1}{q} + \frac{d-1}{r} - (d-1), \frac{1-d}{2}  \bigg\}
\]
then \eqref{e:sobs} holds for all $f \in L^2_{\emph{\textrm{rad}}(x,v)}$.
\end{theorem}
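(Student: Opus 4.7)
The plan is to adapt the direct analysis of Section \ref{section:direct} based on Proposition \ref{p:abstract}. In each part of the theorem, the task reduces to verifying the single-scale estimate \eqref{e:cdelta} with $p=2$ and $s=0$, and with $\eta$ taken to be the claimed lower threshold for $\beta_-$. The common starting point is an improvement of the $L^2$ endpoint \eqref{e:cdelta22} that exploits the radial symmetry of $f$ in the spatial variable: for $f\in L^2_{\textrm{rad}(x)}$ with $\widehat{f}(\cdot, v)$ supported in $\mathfrak{A}_0$,
\[
\|\mathcal{C}_k \rho f\|_{L^2} \lesssim 2^{k(2-d)/2}\|f\|_{L^2}.
\]
This is the $\mathcal{C}_k$-localized form of Theorem \ref{t:roughdata} with $s=0$: taking $\beta_- = (2-d)/2+\varepsilon$ and noting that $D_-^{\beta_-}$ contributes a factor of size $\sim 2^{-k\beta_-}$ on the support of $\mathcal{C}_k$ yields the bound up to an arbitrarily small $\varepsilon$-loss that is absorbed in the final strict inequality. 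Alternatively, the estimate follows sharply from a direct computation using Lemma \ref{l:rhoradial}, the orthogonality of spherical harmonics on $\mathbb{S}^{d-1}$, and the uniform pointwise bound $|p_{d,m}(\lambda)|\leq 1$.

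For the first part ($f\in L^2_{\textrm{rad}(x)}$), the velocity average $\rho f$ need not be radial, so the standard wave Strichartz-based estimate \eqref{e:cdeltag} remains the appropriate input. Applying \eqref{e:cdeltag} to $\mathcal{C}_k\rho f$ (using the essential idempotence $\mathcal{C}_k^2\sim\mathcal{C}_k$ modulo a slight fattening of the symbol $\psi$) and chaining with the $L^2$ endpoint gives
\[
\|\mathcal{C}_k \rho f\|_{L^q_tL^r_x} \lesssim 2^{k\alpha^*(q,r)}\|\mathcal{C}_k \rho f\|_{L^2} \lesssim 2^{k(\alpha^*(q,r) + (2-d)/2)}\|f\|_{L^2}.
\]
A direct algebraic check, distinguishing the two cases in the definition of $\alpha^*(q,r)$, confirms that the exponent $\alpha^*(q,r)+(2-d)/2$ coincides with the first claimed threshold. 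Proposition \ref{p:abstract} then completes this part.

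For the second part ($f\in L^2_{\textrm{rad}(x,v)}$), the change of variable $v\mapsto R^{-1}v$ for $R\in O(d)$ shows $\rho f(Rx,t) = \rho f(x,t)$, so $\widehat{\rho f}(\cdot,\tau)$ is radial in $\xi$. Consequently, in the representation of $\mathcal{C}_k\rho f$ as an integral of $U(t)h_s$ that underlies the proof of \eqref{e:cdeltag}, the functions $h_s$ are radial, so I can substitute Proposition \ref{p:radialStr} for Proposition \ref{p:Str}. This extends the admissibility region from $\frac{1}{q}\leq\frac{d-1}{2}(\frac{1}{2}-\frac{1}{r})$ to the strictly wider $\frac{1}{q}<(d-1)(\frac{1}{2}-\frac{1}{r})$. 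Interpolating with the trivial $L^2$-to-$L^2$ bound then yields the analogue of \eqref{e:cdeltag} with $\alpha^*(q,r)$ replaced by $\alpha^*_{\textrm{rad}}(q,r):=\max\{\frac{1}{q}+\frac{d-1}{r}-\frac{d}{2},-\frac{1}{2}\}$; the linear function here is uniquely characterized by vanishing at $(q,r)=(2,2)$ and equalling $-\frac{1}{2}$ on the radial admissibility boundary. Combining with the $L^2$ endpoint produces $\eta = \alpha^*_{\textrm{rad}}(q,r)+(2-d)/2$, which a direct check identifies with the second claimed threshold.

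I anticipate the main technical obstacle to be the careful derivation of the radial analogue of \eqref{e:cdeltag} on the full range $q,r\in[2,\infty)$, particularly off the radial admissibility wedge where the scaling portion of $\alpha^*_{\textrm{rad}}$ must emerge from interpolation with the trivial bound. As in the proof of Theorem \ref{t:coneestimates}, low-dimensional endpoints ($d=2,3$) may require separate arguments paralleling those given there, but I expect these to be routine adaptations rather than substantial obstructions.
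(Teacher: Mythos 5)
Your proposal is correct and follows essentially the same route as the paper: reduce via Lemma \ref{l:easybit} and Proposition \ref{p:abstract} to \eqref{e:cdelta}, prove the improved $L^2$ endpoint $\|\mathcal{C}_k\rho f\|_{L^2}\lesssim 2^{k(2-d)/2}\|f\|_{L^2}$ using Lemma \ref{l:rhoradial} and $|p_{d,k}(\lambda)|\leq 1$, chain with \eqref{e:cdeltag} for data in $L^2_{\textrm{rad}(x)}$, and substitute Proposition \ref{p:radialStr} for Proposition \ref{p:Str} (with a harmless $\varepsilon$-loss from the open radial admissibility condition, absorbed by the strict inequality on $\beta_-$) for data in $L^2_{\textrm{rad}(x,v)}$. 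The exponent arithmetic you describe matches the paper's thresholds exactly.
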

\begin{proof}
Our strategy is to follow the direct approach in Section \ref{section:direct}. In light of Lemma \ref{l:easybit} and Proposition \ref{p:abstract} (or, strictly speaking, the appropriate modification given we are restricting to $f \in L^2_{\textrm{rad}(x)}$), it suffices to prove \eqref{e:cdelta} for $f \in L^2_{\textrm{rad}(x)}$ such that $\widehat{f}(\cdot,v)$ is supported in $\mathfrak{A}_0$ for each $v \in \mathbb{S}^{d-1}$, and where $\eta = \max\{ \frac{1}{q} + \frac{d-1}{2r} -\frac{3(d-1)}{4}, \frac{1-d}{2}\}$. By \eqref{e:cdeltag}, it thus suffices to prove
\begin{equation} \label{e:CkL20}
\| \mathcal C_k  \rho f\|_{L^2}\lesssim  2^{\frac{2-d}{2}k}\|f\|_{L^2} \qquad (k \geq k_0)
\end{equation}
for such $f$. 

To see \eqref{e:CkL20}, we employ Lemma \ref{l:rhoradial} and polar coordinates to obtain 
\[
\| \mathcal C_k  \rho f\|_{L^2}^2 \lesssim \sum_{\ell = 0}^\infty \int_0^\infty \int_{-1}^1 \phi(r)^2 \psi(2^kr(1-\lambda))^2 (1-\lambda^2)^{d-3} |p_{d,\ell}(\lambda)|^2 \|Y^r_k\|_{L^2}^2 r^{d-2} \, \mathrm{d}\lambda \mathrm{d}r.
\]
Using the pointwise estimate $|p_{d,\ell}(\lambda)| \leq 1$ (see \eqref{e:Legendrebound}) we quickly obtain \eqref{e:CkL20} from this expression.

To prove the claimed estimate on $L^2_{\textrm{rad}(x,v)}$, we use Proposition \ref{p:radialStr} to improve upon \eqref{e:cdeltag} for $g$ which are radially symmetric in the spatial variable. Indeed, by the same argument used to prove \eqref{e:cdeltag} via Proposition \ref{p:Str}, it follows from Proposition \ref{p:radialStr} that for $q,r \in [2,\infty)$ with
$
\frac{1}{q} < (d-1)(\frac{1}{2} - \frac{1}{r})
$
we have
\[
\| \mathcal C_k  g\|_{L^q_tL_x^r}\lesssim  2^{-k/2}\|g\|_{L^2} \qquad (k \geq k_0)
\]
for all $g$ which are radially symmetric in the spatial variable. Hence, for all $\varepsilon > 0$ there exists $C_\varepsilon < \infty$ such that
\[
\| \mathcal C_k  g\|_{L^q_tL_x^r} \leq C_\varepsilon  2^{(\alpha^{**} + \varepsilon)k}\|g\|_{L^2} \qquad (k \geq k_0)
\]
where
\[
\alpha^{**} := \max\bigg\{\frac{1}{q} + \frac{d-1}{r} - \frac{d}{2},-\frac{1}{2}\bigg\}.
\]
It follows from \eqref{e:CkL20} that, for all $\varepsilon > 0$, \eqref{e:cdelta} holds (with an implicit constant depending on $\varepsilon$) for $\beta_- > \alpha^{**} + \frac{2-d}{2} + \varepsilon$. Proposition \ref{p:abstract} then implies \eqref{e:sobs} holds whenever $\beta_- > \alpha^{**} + \frac{2-d}{2}$, and this gives the claimed range of $\beta_-$ in the statement of Theorem \ref{t:radial000} for $f \in L^2_{\textrm{rad}(x,v)}$.
\end{proof}

\subsection{Sharp constants}
The duality principle in Theorem \ref{t:dualityprinciple} along with \eqref{e:mgamma} allows us to extract optimal constants for all cases of \eqref{e:sobs} when $(q,r) = (2,2)$, along with an identification of the class of extremisers.
\begin{theorem} \label{t:sharpgeneral}
Suppose $d \geq 2$ and $\beta_+,\beta_-$ satisfy \eqref{e:scaling} (i.e. $\beta_+ + \beta_- = \frac{1}{2}$) with $\beta_- \geq \frac{3-d}{4}$. Then the optimal constant in the estimate
\begin{equation} \label{e:sobsquared}
\| \diffp\diffn  \rho f\|_{L^2}^2\leq \mathbf{C}\|f\|_{L^2}^2
\end{equation}
for all initial data $f \in L^2$ is given by
\[
\mathbf{C} = 2\pi|\mathbb{S}^{d-2}| (d-2)^{2-d} (d-1-4\beta_-)^{\frac{d-1}{2} - 2\beta_-}(d-3+4\beta_-)^{\frac{d-3}{2} + 2\beta_-}
\]
for $\beta_- \in [\frac{3-d}{4},\frac{1}{4}]$, and
$
\mathbf{C} = 2\pi|\mathbb{S}^{d-2}|
$
for $\beta_- \in (\frac{1}{4},\infty)$. Furthermore, extremisers exist if and only if $(d,\beta_+,\beta_-) = (2,\frac{1}{4},\frac{1}{4})$, in which case $f$ is an extremiser if and only if
$$
\widehat{f}(\xi,v) = (|\xi|^2 - |\xi \cdot v|^2)^{1/4} \widehat{g}(\xi,-\xi\cdot v),
$$
where $g \in L^2$ is nonzero and $\widehat{g}$ is supported in $\mathfrak{C}$. In particular, nonzero functions in $L^2$ which are independent of the spherical variable are extremisers when $(d,\beta_+,\beta_-) = (2,\frac{1}{4},\frac{1}{4})$.
\end{theorem}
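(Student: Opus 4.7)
The starting point is the Duality Principle (Theorem \ref{t:dualityprinciple}) applied to $\rho$ with $\mu = \sigma$ and $(q,r) = (2,2)$. Since the operator norm of an $L^2 \to L^2$ Fourier multiplier equals the $L^\infty$-norm of its symbol, the sharp constant in the squared estimate \eqref{e:sobsquared} must read $\mathbf{C} = 2\pi\|m_\sigma\|_{L^\infty}^2$. Here $m_\sigma$ is the obvious modification of \eqref{e:m-1} obtained by replacing the prefactor $(\tfrac{1}{2}|\mathbb{S}^{d-2}|)^{1/2}$ by $|\mathbb{S}^{d-2}|^{1/2}$ to account for $\mu_{-1} = \tfrac{1}{2}\sigma$. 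The scaling relation $\beta_+ + \beta_- = \tfrac{1}{2}$ kills the $|\xi|$-prefactor, reducing the computation to
\[
\sup_{u \in [0,1]}(1+u)^A(1-u)^B, \qquad A = \tfrac{d-1}{2} - 2\beta_-, \qquad B = \tfrac{d-3}{2} + 2\beta_-,
\]
which is the square of the remaining part of $|m_\sigma|$; note $A + B = d - 2$, and $\beta_- \geq (3-d)/4$ ensures $B \geq 0$.

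Logarithmic differentiation yields the unique interior critical point $u^* = (A-B)/(A+B) = (1-4\beta_-)/(d-2)$, which lies in $[0,1)$ precisely when $\beta_- \leq \tfrac{1}{4}$. For $\beta_- \in [(3-d)/4, \tfrac{1}{4}]$ both exponents are non-negative and the maximum is attained at $u^*$; substituting $1+u^* = (d-1-4\beta_-)/(d-2)$ and $1-u^* = (d-3+4\beta_-)/(d-2)$ and using $A+B = d-2$ collapses the product into the stated closed form. For $\beta_- > \tfrac{1}{4}$ a direct sign check shows that $(1+u)^A(1-u)^B$ is non-increasing on $[0,1]$, so its supremum is $1$ attained only at $u=0$, yielding $\mathbf{C} = 2\pi|\mathbb{S}^{d-2}|$.

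For extremisers, the duality identifies $f$ as extremal in \eqref{e:sobsquared} if and only if $f = \rho^*\mathcal{F}^{-1}(m\widehat{g})$ for some nonzero $g$ achieving equality in the Fourier multiplier inequality for $m_\sigma$, equivalently with $|m_\sigma|$ coinciding with $\|m_\sigma\|_{L^\infty}$ almost everywhere on $\operatorname{supp}\widehat{g}$. Such $g$ exist if and only if the maximiser set of $|m_\sigma|$ has positive Lebesgue measure. The analysis above shows that $(1+u)^A(1-u)^B$ is non-constant on $(0,1)$ unless $A = B = 0$, which together with $A + B = d-2$ isolates exactly $(d,\beta_+,\beta_-) = (2,\tfrac{1}{4},\tfrac{1}{4})$. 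In this distinguished case $m_\sigma = \sqrt{2}\,\mathbf{1}_\mathfrak{C}$, the extremisers $g$ are precisely the nonzero $L^2$ functions with $\widehat{g}$ supported in $\mathfrak{C}$, and \eqref{e:rhostarhat} combined with $m(\xi,\tau) = (|\xi|^2 - \tau^2)^{1/4}$ on $\mathfrak{C}$ yields the claimed expression for $\widehat{f}$. To verify that nonzero velocity-independent data with $\widehat{f}(\xi,v) = \widehat{F}(\xi)$ are extremisers, set $\widehat{g}(\xi,\tau) = \widehat{F}(\xi)(|\xi|^2 - \tau^2)^{-1/4}\mathbf{1}_\mathfrak{C}(\xi,\tau)$; this $g$ lies in $L^2$ by Plancherel and the identity $\int_{-|\xi|}^{|\xi|}(|\xi|^2 - \tau^2)^{-1/2}\,d\tau = \pi$. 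The main obstacle is correctly isolating the constancy condition $A = B = 0$ as the necessary and sufficient criterion for the existence of extremisers, rather than merely the attainment of the supremum on a null set.
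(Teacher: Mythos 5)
Your proposal is correct and follows essentially the same route as the paper: the duality principle reduces the optimal constant to $2\pi\|m_\sigma\|_{L^\infty}^2$, the elementary one-variable maximisation of $(1+u)^A(1-u)^B$ with $A+B=d-2$ gives the stated formulas, and extremisers are transferred from the dual multiplier inequality via $T^*$, existing precisely when the symbol is constant on $\mathfrak{C}$, i.e.\ $A=B=0$, i.e.\ $(d,\beta_+,\beta_-)=(2,\tfrac14,\tfrac14)$. Your final verification that \emph{every} nonzero velocity-independent datum is an extremiser (via $\widehat{g}(\xi,\tau)=\widehat{F}(\xi)(|\xi|^2-\tau^2)^{-1/4}\mathbf{1}_\mathfrak{C}$ and $\int_{-|\xi|}^{|\xi|}(|\xi|^2-\tau^2)^{-1/2}\,\mathrm{d}\tau=\pi$) is in fact slightly more complete than the paper's, which only exhibits radial examples.
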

We note that when $d=2$ and $\beta_- = \frac{3-d}{4}$, the expression $0^0$ arises in the above formula for the optimal constant $\mathbf{C}$, and this should be interpreted as $0^0 = 1$ is each instance.
\begin{proof}
By \eqref{e:m-1} we may write
$m_{-1}(\xi,\tau)^2 = M(\frac{|\tau|}{|\xi|})$, where
\[
M(\lambda) = \tfrac{1}{2}|\mathbb{S}^{d-2}|   (1 +  \lambda)^{\frac{d-1}{2}-2\beta_-} (1 - \lambda)^{2\beta_- + \frac{d-3}{2}} \mathbf{1}_{[0,1]}(\lambda).
\]
Since $\sigma = 2\mu_{-1}$, it follows from Theorem \ref{t:dualityprinciple} that $\mathbf{C} = 4\pi \|M\|_\infty$. Elementary considerations may be used to show that this coincides with the claimed expression in the statement of Theorem \ref{t:sharpgeneral}.

Regarding extremisers, we observe that a necessary condition for existence is that $\|m_{-1}\|_\infty$ is attained on a set of positive measure in $\mathbb{R}^{d+1}$. However, it is clear that $\|M\|_\infty$ is attained at a single point if $(d,\beta_+,\beta_-) \neq (2,\frac{1}{4},\frac{1}{4})$, thus ruling out the possibility of extremisers.

When $(d,\beta_+,\beta_-) = (2,\frac{1}{4},\frac{1}{4})$ we have $\mathbf{C} = 4\pi$ and the function $M$ is identically equal to $\mathbf{1}_{[0,1]}$; hence extremisers exist. To give an identification of the class of extremisers, note that \eqref{e:sobsquared} holds if and only if $T = \mathcal{F}^{-1} m \widehat{\rho f}$ is a bounded operator $L^2 \to L^2$, with $m(\xi,\tau) = (|\xi|^2 - |\tau|^2)^{1/4}$, and  one can show that the class of extremisers for $T$ coincides with the image under $T^*$ of the class of extremisers for the dual inequality $T^* : L^2 \to L^2$ (see, for example, \cite{BJO}). By \eqref{e:mainpoint} it follows that $g$ is an extremiser for $T^*$ if and only if $g$ is an extremiser for the multiplier estimate
\[
\| \mathcal{F}^{-1}(m_{-1}\widehat{g})\|_2^2 \leq \|g\|_2^2.
\]
Since $m_{-1} = \mathbf{1}_\mathfrak{C}$ in the case $(d,\beta_+,\beta_-) = (2,\frac{1}{4},\frac{1}{4})$, it is necessary and sufficient for such $g$ to have Fourier support in $\mathfrak{C}$. Using \eqref{e:rhostarhat}, we see that $f$ is an extremiser if and only if
$$
\widehat{f}(\xi,v) = (|\xi|^2 - |\xi \cdot v|^2)^{1/4} \widehat{g}(\xi,-\xi\cdot v)
$$
for such $g$, as claimed.

Taking $\widehat{g}(\xi,\tau) = (|\xi|^2 - \tau^2)^{-1/4}g_0(|\xi|)$, where $g_0$ is a nonzero function such that $\int_0^\infty |g_0(r)|^2 r \,\mathrm{d}r < \infty$, then $g \in L^2$ with support in $\mathfrak{C}$. Moreover,
$$
(|\xi|^2 - |\xi \cdot v|^2)^{1/4} \widehat{g}(\xi,-\xi\cdot v)
$$
is independent of $v$, and hence such functions are amongst the class of extremisers.
\end{proof}

An inspection of the argument used to prove Theorem \ref{t:roughdata} when $s=0$ allows us to extract optimal constants and a characterisation of extremisers.
\begin{theorem} \label{t:sharpradial}
Suppose $d \geq 2$ and $\beta_+,\beta_-$ satisfy \eqref{e:scaling} (i.e. $\beta_+ + \beta_- = \frac{1}{2}$) with $\beta_- > \frac{2-d}{2}$. Then the optimal constant in the estimate
\[
\| \diffp\diffn  \rho f\|_{L^2}^2\leq \mathbf{C}_0\|f\|_{L^2}^2
\]
for initial data $f \in L^2_{\emph{\textrm{rad}}(x)}$ is given by
\[
\mathbf{C}_0 = 2^{2d-2}\pi \frac{|\mathbb{S}^{d-2}|^2}{|\mathbb{S}^{d-1}|} \mathrm{B}(\tfrac{1}{2};2\beta_+ + d-2,2\beta_- + d-2)
\]
and this is attained if and only if $f \in L^2_{\emph{\textrm{rad}}(x,v)}$.
\end{theorem}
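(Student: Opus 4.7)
The plan is to squeeze a bit more information out of the identity already established in the proof of Theorem \ref{t:roughdata} with $s=0$. Expanding $\widehat{f}(\xi,v) = \sum_{k \geq 0}Y_k^{|\xi|}(v)$ in spherical harmonics in the $v$-variable and applying Lemma \ref{l:rhoradial} followed by polar coordinates in $\xi$ and orthogonality on $\mathbb{S}^{d-1}$, that proof yields the two identities
\begin{align*}
\|D_+^{\beta_+}D_-^{\beta_-}\rho f\|_{L^2}^2 &= \frac{2|\mathbb{S}^{d-2}|^2}{(2\pi)^{d-1}}\sum_{k=0}^{\infty} I_k\, a_k, \\
\|f\|_{L^2}^2 &= \frac{|\mathbb{S}^{d-1}|}{(2\pi)^d}\sum_{k=0}^\infty a_k,
\end{align*}
where $a_k := \int_0^\infty \|Y_k^r\|_{L^2}^2 r^{d-1}\,\mathrm{d}r$ and $I_k$ is as displayed in that proof.

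The first observation is that, together, these identities express the ratio $\|D_+^{\beta_+}D_-^{\beta_-}\rho f\|_{L^2}^2 / \|f\|_{L^2}^2$ as a weighted average (with nonnegative weights $a_k$) of the numbers $\frac{4\pi|\mathbb{S}^{d-2}|^2}{|\mathbb{S}^{d-1}|} I_k$, so that the sharp constant is precisely
\[
\mathbf{C}_0 = \frac{4\pi|\mathbb{S}^{d-2}|^2}{|\mathbb{S}^{d-1}|} \sup_{k \geq 0} I_k.
\]
The next step is to identify this supremum as $I_0$. The normalisation $p_{d,0} \equiv 1$ together with the pointwise Legendre bound $|p_{d,k}(\lambda)| \leq 1$ on $[-1,1]$ recorded in \eqref{e:Legendrebound} gives $I_k \leq I_0$; moreover, since for each $k \geq 1$ the polynomial $p_{d,k}$ is a non-trivial polynomial of degree $k$ with $p_{d,k}(1)=1$, one has $|p_{d,k}(\lambda)|<1$ on an open subset of $(0,1)$, upgrading this to the strict inequality $I_k<I_0$ whenever $k \geq 1$. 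The explicit formula for $\mathbf{C}_0$ then follows by computing $I_0$ via the affine change of variable $\lambda = 1-2u$, using $\beta_++\beta_-=\tfrac{1}{2}$, and recognising the resulting integral as an incomplete beta function; the hypothesis $\beta_->\frac{2-d}{2}$ is precisely what makes this integral converge at $u=0$.

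Finally, for the characterisation of extremisers, equality in the bound $\sum_k I_k a_k \leq I_0 \sum_k a_k$ forces $a_k=0$ for every $k \geq 1$, thanks to the strict inequality $I_k<I_0$ proved above. This is equivalent to $\widehat{f}(\xi,\cdot)$ being constant on $\mathbb{S}^{d-1}$ for almost every $\xi$, i.e.\ $f \in L^2_{\mathrm{rad}(x,v)}$; conversely, any nonzero such $f$ saturates the bound since only the $k=0$ mode survives in the expansion. The one delicate point in this plan is the strict inequality $I_k<I_0$ for $k \geq 1$, but this reduces to the elementary observation that a nonconstant $p_{d,k}$ cannot equal $\pm 1$ throughout $(0,1)$.
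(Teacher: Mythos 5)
Your proposal is correct and follows essentially the same route as the paper: it reuses the two identities from the proof of Theorem \ref{t:roughdata} with $s=0$, reduces the sharp constant to $\sup_k I_k = I_0$, evaluates $I_0$ as an incomplete beta function using $\beta_++\beta_-=\tfrac12$, and characterises extremisers via the strict inequality $I_k<I_0$ for $k\geq 1$. The only difference is that you spell out why $I_k<I_0$ is strict (a nonconstant polynomial cannot have modulus $1$ on an interval), a point the paper asserts without comment, so this is a welcome but minor elaboration rather than a different argument.
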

Here, $\mathrm{B}(x;a,b) = \int_0^x \lambda^{a-1}(1-\lambda)^{b-1}\,\mathrm{d}\lambda$ denotes the incomplete beta function.
\begin{proof}
From the proof of Theorem \ref{t:roughdata} when $s=0$, it is clear that the step at which an inequality was made occurred when we used the bound $I_k \lesssim 1$ for all $k \geq 0$, where
\[
I_k = \int_{0}^1 |p_{d,k}(\lambda)|^2 (1+\lambda)^{d-3 + 2\beta_+} (1-\lambda)^{d-3 + 2\beta_-} \, \mathrm{d}\lambda.
\]
The uniform bound $|p_{d,k}(\lambda)| \leq 1 = p_{d,0}$ for all $k \geq 0$, $d \geq 2$ and $|\lambda| \leq 1$ gives that
\begin{equation} \label{e:Iktight}
I_k \leq 2^{2(d-2)}\mathrm{B}(\tfrac{1}{2};2\beta_+ + d-2,2\beta_- + d-2) = I_0
\end{equation}
for all $k \geq 0$, with equality if and only if $k=0$. This gives the claimed inequality in the statement of Theorem \ref{t:sharpradial}, and the optimality of the constant is clear by taking $f \in L^2_{\textrm{rad}(x,v)}$, for then all terms $Y^r_k$ are zero in the expansion \eqref{e:radialexpansion} for $k \geq 1$. Conversely, if $f$ is an extremiser then the fact that \eqref{e:Iktight} holds strictly for $k \geq 1$ forces $Y^r_k$ to vanish for almost all $r > 0$.
\end{proof}

\begin{acknowledgements}
This work was supported by the European Research Council grant number 307617 (Bennett), JSPS Research Activity Start-up no. 26887008 and JSPS Grant-in-Aid for Young Scientists A no. 16H05995 (Bez), and NRF Republic of Korea no. 2015R1A2A2A05000956 (Lee). The authors would also like to thank the anonymous referee for their helpful comments on the first draft of the paper.
\end{acknowledgements}

\end{document}